\newtheorem{thm}{Theorem}[section]
\newtheorem*{thm*}{Theorem}
\newtheorem*{cor*}{Corollary}
\newtheorem*{prop*}{Proposition}
\newtheorem{cor}[thm]{Corollary}
\newtheorem{prop}[thm]{Proposition}
\newtheorem{lem}[thm]{Lemma}
\theoremstyle{definition}
\newtheorem{defn}[thm]{Definition}
\theoremstyle{remark}
\newtheorem{rem}[thm]{Remark}
\newtheorem*{idea*}{Idea}
\newtheorem*{ntn*}{Notation}
\let\c@equation\c@thm
\numberwithin{thm}{section}
\numberwithin{equation}{section}
\title[Beauville-Narasimhan-Ramanan correspondence for twisted Higgs $V$-bundles]{The Beauville-Narasimhan-Ramanan correspondence for twisted Higgs $V$-bundles and components of parabolic $\text{Sp}(2n,\mathbb{R})$-Higgs moduli spaces}
\author{Georgios Kydonakis, Hao Sun and Lutian Zhao}
\begin{document}
\maketitle
\begin{abstract}
We generalize the classical Beauville-Narasimhan-Ramanan correspondence to the case of parabolic Higgs bundles with regular singularities and Higgs $V$-bundles. Using this correspondence along with Bott-Morse theoretic techniques we provide an exact component count for moduli spaces of maximal parabolic $\text{Sp}\left( 2n,\mathbb{R} \right)$-Higgs bundles with fixed parabolic structure.
\end{abstract}
\flushbottom
\section{Introduction}

Let $X$ be a smooth, irreducible, projective curve of genus $g\ge 2$ over $\mathbb{C}$ and let $\eta \in \mathsf{\mathcal{A}}=\underset{i=1}{\overset{n}{\mathop \oplus }}\,{{H}^{0}}\left( X,\mathcal{L}^i \right)$, where $\mathcal{L}$ denotes an arbitrary holomorphic line bundle over $X$. Similar to ordinary Higgs bundles \cite{Hit87b}, the $\mathcal{L}$-twisted Higgs bundles have a Hitchin map, which sends a Higgs bundle $\left( E, \Phi \right)$ to the coefficients of the characteristic polynomial of the Higgs field $\Phi$; we refer the reader to \cite {BGL,BiRa,GR,GaRa,Ma} for primary reference on twisted Higgs bundles.

The Beauville-Narasimhan-Ramanan correspondence (Proposition 3.6 in \cite{BNR}) in its simplest form provides a bijective correspondence between isomorphism classes of twisted Higgs bundles over $X$ and line bundles over the spectral curve ${{X}_{\eta }}$, defined by the point $\eta \in \mathsf{\mathcal{A}}$. Under this correspondence, fibers of the Hitchin fibration are Jacobians of ${{X}_{\eta }}$ (see the survey \cite{Dal} for further details on the spectral correspondence).

Considering a set $D=\left\{ {{x}_{1}},\ldots {{x}_{s}} \right\}$ of finitely many points on $X$, a version of the correspondence for parabolic Higgs bundles with Higgs fields having (possibly) irregular singularities at the points in $D$ was proven by S. Szab\'{o} in \cite{Sza}. On the other hand, for local systems on a smooth Deligne-Mumford stack, the Beauville-Narasimhan-Ramanan correspondence provides the existence of a canonical morphism from the stack of flat connections ${{\mathsf{\mathcal{M}}}_{dR}}$ to the Frobenius twist ${{\mathsf{\mathcal{A}}}^{(1)}}$, called the twisted Hitchin morphism (see \cite{Gro}).

In this article, the focus is on parabolic Higgs bundles with regular (tame) singularities and we prove the following version of this correspondence:

\begin{thm*}{\bf{\ref{313}}}
Let $X$ be a closed Riemann surface, $D=\{x_1,...,x_s\}$ a fixed set of $s$-many points on $X$ and let $m$ be a fixed positive integer. Denote by   $K=\Omega _{X}^{1}$ the canonical line bundle over $X$  and consider $K\left( D \right):=K\otimes {{\mathsf{\mathcal{O}}}_{X}}\left( D \right)$. Fix a parabolic structure $\alpha$ for a rank $n$ parabolic bundle over $X$ and a tuple of sections $\eta=(\eta_i)$, where $\eta_i$ is a section of $K(D)^i$ for $1 \leq i \leq n$. Assume that the surface $X_\eta$ is non-singular and the intersection of the branch points $B$ and the given divisor $D$ is empty. For a fixed order of the pre-image $\pi^{-1}(x)=\{\widetilde{x}_1,...,\widetilde{x}_n\}$ of each $x \in D$, there is a bijective correspondence between isomorphism classes of strictly compatible parabolic line bundles $(L,\widetilde{\alpha})$ on $X_\eta$ and isomorphism classes of pairs $(E, \Phi)$, where $E$ is a parabolic bundle of rank $n$ with parabolic structure $\alpha$ and $\Phi:E \rightarrow E \otimes K(D)$ a parabolic Higgs field with characteristic coefficients $\eta_i$.
\end{thm*}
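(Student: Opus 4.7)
The plan is to follow the classical Beauville-Narasimhan-Ramanan recipe, adapted to the parabolic setting: a pair $(E,\Phi)$ should be recovered as $(\pi_*L, \pi_*(\lambda\,\cdot))$ from a line bundle $L$ on the spectral curve $X_\eta$, where $\lambda$ is the tautological section of $\pi^*K(D)$. The extra task compared with the non-parabolic case is to translate the parabolic data at $D$ across $\pi$ using the étale structure of the spectral cover there.

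First I would dispose of the quasi-parabolic (sheaf-theoretic) content away from $D$. On $X\setminus D$ the classical BNR correspondence for $K(D)$-twisted Higgs bundles applies verbatim: to a line bundle $L$ on $X_\eta$ one attaches $(\pi_*L,\pi_*(\lambda\cdot))$; conversely $L=\operatorname{coker}\!\bigl(\pi^*\Phi-\lambda\cdot\mathrm{id}:\pi^*(E\otimes K(D)^{-1})\to\pi^*E\bigr)$ as a sheaf on the total space of $K(D)$, automatically supported on the spectral curve defined by the $\eta_i$. Smoothness of $X_\eta$ promotes this cokernel to a genuine line bundle. This reduces the problem to a local analysis around each $x\in D$.

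The crucial local input is the hypothesis $B\cap D=\emptyset$: $\pi$ is étale over a neighbourhood of $x$, so $\pi^{-1}(x)=\{\widetilde{x}_1,\dots,\widetilde{x}_n\}$ consists of $n$ distinct points and $\pi_*L$ splits canonically near $x$ as $\bigoplus_j L_{\widetilde{x}_j}$ on fibres. Under this splitting $\Phi$ has a simple pole at $x$ whose residue is diagonal, with eigenvalue $\operatorname{Res}_{\widetilde{x}_j}\lambda$ on the $j$-th summand. Using the fixed ordering of the preimages I would then \emph{define} the parabolic flag of $E$ at $x$ as the descending flag $F^i:=\bigoplus_{j\ge i} L_{\widetilde{x}_j}$, and attach to $F^i/F^{i+1}$ the weight $\widetilde{\alpha}_i$ carried by $L$ at $\widetilde{x}_i$. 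Strict compatibility should be precisely the requirement that these pushed-forward weights equal the prescribed $\alpha$. Going backwards, from $(E,\Phi,\alpha)$ with characteristic polynomial $\eta$ the smoothness of $X_\eta$ forces $\Phi$ to have $n$ distinct eigenvalues at each $x\in D$; the corresponding eigenlines recover the fibres $L_{\widetilde{x}_j}$, and the weights $\alpha$ transport via the chosen ordering to weights $\widetilde{\alpha}$, by construction strictly compatible. That these two constructions are mutually inverse on isomorphism classes follows because an isomorphism of parabolic line bundles preserves the fibres at the $\widetilde{x}_j$ and hence descends to a parabolic isomorphism of pushforwards, and vice versa.

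The step I expect to be the main obstacle is not the underlying push-pull bijection, which is essentially a twist of the classical BNR argument, but the careful \emph{bookkeeping of weights under $\pi_*$}. One must verify that the definition of strict compatibility is the precise combinatorial condition which makes the étale-local decomposition of $\pi_*L$ at $D$ reproduce the prescribed flag and weights of $\alpha$, and that this dovetails with the characteristic-polynomial constraint so that the construction genuinely factors through the spectral curve $X_\eta$ rather than an infinitesimal thickening of it in the total space of $K(D)$.
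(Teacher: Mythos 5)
Your proposal follows essentially the same route as the paper: invoke the classical BNR correspondence for the underlying $K(D)$-twisted Higgs bundle, and then use the hypothesis $B\cap D=\emptyset$, which makes $\pi^{-1}(x)$ consist of $n$ distinct points over each $x\in D$, to exchange the $n$ weights $\alpha_i(x)$ with trivial-flag weights $\widetilde{\alpha}(\widetilde{x}_i)$ on $L$ via the fixed ordering. The paper's own proof is only a brief sketch of this transfer of parabolic data, so your explicit eigenline splitting of $\pi_*L$ at $D$ and the flag bookkeeping are a fleshed-out version of the same argument rather than a different approach.
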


In the case when the weights in the parabolic structure are rational numbers, there is a bijection between moduli of semistable parabolic Higgs bundles over $X$ and semistable Higgs bundles over an orbifold $M$ (called a $V$-surface) with underlying manifold the Riemann surface $X$. In the construction of the orbifold surface the denominator of the weights is describing the cyclic group action around the marked points of $D$ (see \cite{FuSt,KySZ,NaSt} for the detailed construction). For our purposes, we provide here an $\mathcal{L}$-twisted version of this correspondence (Proposition \ref{3030}), for $\mathcal{L}$ a line $V$-bundle over $M$. Let $\eta =\left( {{\eta }_{i}} \right)\in \mathsf{\mathcal{A}}$ be a set of sections of ${{\mathsf{\mathcal{L}}}^{i}}$, for $1\le i\le n$. We construct a spectral covering $\pi: {{M}_{\eta }}\to M$ of a $V$-surface $M$ by pulling back the spectral cover ${{X}_{\eta }}\to X$ and defining an atlas for the $V$-surface ${{M}_{\eta }}$ over $\left({X}_{\eta }, {\pi}^{-1}\left( D\right)\right)$. The following corollary is then implied from this version of the correspondence:

\begin{cor*}{\bf{\ref{315}}}
Assume that the underlying surface of $M_\eta$ is nonsingular and the intersection of the set of branch points $B$ and the divisor $D$ is empty. Then there is a bijective correspondence between isomorphism classes of strictly compatible line $V$-bundles $L$ on $M_\eta$ and isomorphism classes of pairs $(E, \Phi)$, where $E$ is a $V$-bundle of rank $n$ over $M$ and $\Phi:E \rightarrow E \otimes \mathcal{L}$ a homomorphism with characteristic coefficients $\eta_i$. For $\rho : M_\eta \rightarrow M$, it is $\rho_*(L)=E$.
\end{cor*}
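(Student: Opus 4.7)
My plan is to deduce the corollary by composing three equivalences: the parabolic Beauville--Narasimhan--Ramanan correspondence of Theorem \ref{313} on the base curve $X$, together with the $\mathcal{L}$-twisted parabolic/$V$-bundle dictionary of Proposition \ref{3030}, applied on both $M$ and the spectral $V$-surface $M_\eta$. Starting from a Higgs $V$-bundle $(E,\Phi)$ on $M$ with characteristic coefficients $\eta$, Proposition \ref{3030} converts it into a parabolic Higgs bundle $(E^{\mathrm{par}},\Phi^{\mathrm{par}})$ on $X$ of rank $n$, whose parabolic weights $\alpha$ are determined by the isotropy representation of the orbifold charts at each $x\in D$, and whose Higgs field takes values in $E^{\mathrm{par}}\otimes K(D)$ via the parabolic-twist side of the dictionary. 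Applying Theorem \ref{313} then produces a strictly compatible parabolic line bundle $(L^{\mathrm{par}},\widetilde{\alpha})$ on $X_\eta$.

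The key observation at the spectral level is that, by the very definition in the excerpt, $M_\eta$ is the $V$-surface over $(X_\eta,\pi^{-1}(D))$ whose orbifold group at each $\widetilde{x}_j\in\pi^{-1}(x)$ agrees with the orbifold group of $M$ at $x$. This atlas is well-defined precisely because the hypothesis $B\cap D=\emptyset$ forces $\pi$ to be \'etale over $D$, so each preimage inherits a unique chart with the same cyclic isotropy. Consequently, Proposition \ref{3030} applies once more on $M_\eta$ and promotes $(L^{\mathrm{par}},\widetilde{\alpha})$ to a line $V$-bundle $L$, provided the weights $\widetilde{\alpha}$ are compatible with the isotropy denominators. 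Running the same three arrows in reverse supplies the inverse assignment $L\mapsto(E,\Phi)$, giving the desired bijection.

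The step I expect to be the main obstacle is matching the notions of compatibility at the marked points across the three translations. Concretely, I must verify that the weights $\widetilde{\alpha}$ produced by Theorem \ref{313} at each $\widetilde{x}_j$ agree exactly with the isotropy weights of $M_\eta$ there, so that a genuine line $V$-bundle (rather than a non-trivially weighted parabolic $V$-bundle on $M_\eta$) comes out of the last step. Since $\Phi$ is a morphism of $V$-bundles with no extra residue data, $\Phi^{\mathrm{par}}$ carries a residue at each $x\in D$ whose eigenvalues are forced by $\alpha$; the spectral construction distributes these eigenvalues among the sheets lying over $x$, and the resulting weights at $\widetilde{x}_j$ should coincide with the orbifold weights of $M_\eta$ on the nose. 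Carrying out this local comparison chart by chart, and checking that the strict compatibility hypothesis on $(L^{\mathrm{par}},\widetilde{\alpha})$ is exactly what is needed, is the core local calculation of the proof.

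Finally, the identity $\rho_*(L)=E$ follows by combining the pushforward statement $\pi_*(L^{\mathrm{par}})=E^{\mathrm{par}}$ supplied by Theorem \ref{313} with the fact that the parabolic/$V$-bundle dictionary intertwines $\pi_*$ with $\rho_*$. This intertwining is immediate from the construction of $M_\eta$, since on each chart $\rho$ is literally $\pi$ equivariantly with respect to the cyclic isotropy groups, so pushforward on the $V$-bundle side pulls back through the dictionary to parabolic pushforward on the curve side.
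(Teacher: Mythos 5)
Your proposal is correct and follows essentially the same route as the paper: the paper's proof of Corollary \ref{315} is precisely the composition of Proposition \ref{3030} (applied over $M$ and over $M_\eta$) with the parabolic BNR correspondence of Theorem \ref{313}, organized in the same commutative square you describe. The extra local checks you flag (matching the weights from Theorem \ref{313} with the isotropy data of $M_\eta$, and intertwining $\pi_*$ with $\rho_*$) are exactly the points the paper leaves implicit, so spelling them out only strengthens the argument.
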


In order to provide a complete description of the Beauville-Narasimhan-Ramanan correspondence and the correspondence among parabolic Higgs bundles, Higgs $V$-bundles and Higgs bundles over a root stack, we have one-to-one correspondences between the following six categories:
\begin{center}
\begin{tikzcd}[column sep=12ex]	
	\text{$\mathcal{L}$-twisted Higgs $V$-bundles over $M$ } \arrow[r,"\text{ Corollary \ref{315}}"] \arrow[d,"\text{ Proposition \ref{3030}}"] & \text{ line $V$-bundles over $M_\eta$} \arrow[d,"\text{ Proposition \ref{3030}}"]\\

	 \text{ parabolic $\mathbb{L}$-twisted Higgs bundles over $X$} \arrow[r,"\text{ Theorem \ref{313}}"] \arrow[d,"\text{ Proposition \ref{318}}"] & \text{parabolic line bundles over $X_\eta$} \arrow[d,"\text{ Proposition \ref{318}}"] \\
	
	\text{ $\mathfrak{L}$-twisted Higgs bundles over stack $\mathfrak{X}$} \arrow[r,"\text{ Corollary \ref{319}}"] & \text{ line bundles over $\mathfrak{X}_\eta$}.
\end{tikzcd}	
\end{center}

For (non-parabolic) $G_{\mathbb{C}}$-Higgs bundles, the construction of spectral curves from \cite{BNR} implies that each connected component of a generic fiber of the Hitchin map is an abelian variety (see for instance \cite{BaScha} for a detailed description). Analogously to this approach, we may describe the regular fibers of the parabolic Hitchin map for the moduli space $\mathcal{M}_{par}(X,D,\alpha, n)$ of rank $n$ parabolic Higgs bundles with a given parabolic structure $\alpha$. This fibration sends a Higgs bundle $(E, \Phi)$ to the coefficients of the characteristic polynomial of $\Phi$, while in the case when the Higgs field is assumed to be strongly parabolic, then the eigenvalues of $\Phi$ all vanish at the points in the divisor $D$. The Hitchin map for parabolic Higgs bundles was first defined in \cite{LoMa} for non-strongly parabolic Higgs fields $\Phi$ and has been studied for strongly parabolic $\Phi$ in \cite{GoLo}.

We are interested in this article, however, in considering weights in the parabolic structure as rational numbers with denominator $m$ and study the fibration through the prism of the aforementioned correspondence to Higgs $V$-bundles over $M$. For a positive integer $m \ge 2$, denote by ${{\mathsf{\mathcal{M}}}_{par}}\left( X,D,\alpha,n ,m \right)$ the moduli space of parabolic Higgs bundles $\left( E,\Phi  \right)$ such that the weights in the system $\alpha $ can be written as rational numbers with denominator $m$; note that ${{\mathsf{\mathcal{M}}}_{par}}\left( X,D,\alpha,n ,m \right)\subset {{\mathsf{\mathcal{M}}}_{par}}\left( X,D,\alpha,n  \right)$. Similarly, denote by ${{\mathsf{\mathcal{M}}}_{par}}\left( X,D,n \right)$ the moduli space of rank $n$ poly-stable parabolic Higgs bundles $\left( E,\Phi  \right)$ with arbitrary parabolic structure, and ${{\mathsf{\mathcal{M}}}_{par}}\left( X,D,n,m \right)$ the moduli space of rank $n$ poly-stable parabolic Higgs bundles $\left( E,\Phi  \right)$ equipped with a parabolic structure in which the weights can be written as rational numbers with denominator $m$.

We deduce the following:
\begin{prop*}{\bf{\ref{3170}}}
The fiber of the parabolic Hitchin map $$\mathcal{M}_{par}(X,D,n,m) \rightarrow \bigoplus\limits^n_{i=1} H^0(X,K(D)^i)$$ over a regular point $\eta \in \bigoplus\limits^n_{i=1} H^0(X,K(D)^i)$ is isomorphic to the $V$-Picard group of the spectral covering $M_\eta$ of $M$, where $M$ is uniquely determined by the data $(X,D,m)$.
\end{prop*}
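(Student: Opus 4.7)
The plan is to descend the fiber computation to the $V$-surface picture, where Corollary \ref{315} directly identifies spectral data with line $V$-bundles. The key input is the left vertical arrow of the correspondence diagram: Proposition \ref{3030} provides an equivalence between parabolic $K(D)$-twisted Higgs bundles over $X$ whose weights have denominator $m$ and $\mathcal{L}$-twisted Higgs $V$-bundles over the $V$-surface $M$ uniquely determined by $(X,D,m)$, with $\mathcal{L}$ the $V$-line bundle corresponding to $K(D)$. Applying this equivalence at the level of moduli, I would first identify $\mathcal{M}_{par}(X,D,n,m)$ with the moduli space of polystable $\mathcal{L}$-twisted Higgs $V$-bundles over $M$.

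The second step is to match the parabolic Hitchin map with the $V$-Hitchin map under this equivalence. Since Proposition \ref{3030} sends a Higgs field $\Phi:E\to E\otimes K(D)$ to a Higgs $V$-field on a $V$-bundle with values in $\mathcal{L}$, the coefficients of its characteristic polynomial are transported to $V$-sections $\eta_i\in H^0(M,\mathcal{L}^i)$, and under the canonical identification $H^0(M,\mathcal{L}^i)\cong H^0(X,K(D)^i)$ the two Hitchin bases coincide. This reduces the problem to computing the fiber of the $V$-Hitchin map over $\eta$. A regular value $\eta$ satisfies exactly the hypotheses of Corollary \ref{315}, namely that the underlying surface of $M_\eta$ is nonsingular and $B\cap D=\emptyset$, so applying that corollary gives a bijection between isomorphism classes of Higgs $V$-bundles $(E,\Phi)$ with characteristic data $\eta$ and isomorphism classes of strictly compatible line $V$-bundles on $M_\eta$. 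Since the set of isomorphism classes of line $V$-bundles on $M_\eta$ is by definition the $V$-Picard group of $M_\eta$, the claimed isomorphism follows.

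The main obstacle is verifying the compatibility of the two equivalences with moduli-theoretic structure (as opposed to a bijection on objects). Concretely, one must check: that Proposition \ref{3030} is compatible with polystability on both sides so that it descends to an isomorphism of moduli, not merely of isomorphism classes of objects; that taking the characteristic polynomial commutes with the push-forward $\rho_{*}$ relating $V$-bundles to the parabolic bundles over $X$, so that fibers are preserved; and that the ``strict compatibility'' condition of Corollary \ref{315} captures exactly the parabolic weight systems with denominator $m$ that are allowed to vary inside $\mathcal{M}_{par}(X,D,n,m)$. Once these compatibilities are established, the identification with $\mathrm{Pic}_V(M_\eta)$ is a direct translation of Corollary \ref{315} to the language of Hitchin fibers.
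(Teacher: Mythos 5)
Your proposal is correct and follows essentially the same route as the paper: the paper obtains this statement by first identifying $\mathcal{M}_{par}(X,D,n,m)$ with the moduli space of Higgs $V$-bundles over the $V$-surface $M$ determined by $(X,D,m)$ (Proposition \ref{316}, via Theorem \ref{303}/Proposition \ref{3030} with $\mathcal{L}=K(D)$), and then applying the BNR correspondence for $V$-surfaces (Corollary \ref{315}) to identify the fiber over a regular $\eta$ with line $V$-bundles on $M_\eta$, i.e.\ $\mathrm{Pic}_V(M_\eta)$. Your attention to the fact that the weights are allowed to vary in $\mathcal{M}_{par}(X,D,n,m)$ (so that all isotropy data on $M_\eta$ occur, giving the full $V$-Picard group) matches the paper's remark that the Hitchin map does not see the parabolic structure.
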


In the special case of parabolic Higgs bundles with rank 2 and rational weights with denominator 2, the parabolic Hitchin fibration of a regular point $\eta \in H^0(X,K(D)^2)$ is precisely the Prym variety
\begin{align*}
\text{Prym}(M_\eta,M)=\{L \in \text{Pic}_V(M_\eta)| \tau^*L =L^{-1}\}
\end{align*}
of the spectral covering $M_\eta \rightarrow M$, where $M$ and $M_\eta$ are the corresponding $V$-surfaces of $X$ and $X_\eta$, and $\tau:M_\eta \rightarrow M_\eta$ is the involution of $M_\eta$.
Namely, we show:

\begin{prop*}{\bf{\ref{317}}}
The fiber of the parabolic Hitchin map
\begin{align*}
\mathcal{M}_{par}(X,D,2,2) \rightarrow H^0(X,K(D)^2)
\end{align*}
is a finite number of copies of the Prym variety $\text{Prym}(M_\eta,M)$. This number of copies only depends on the parabolic structure $\alpha$.
\end{prop*}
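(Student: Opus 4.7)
The plan is to apply Proposition~\ref{3170} to reduce the question to an analysis of the $V$-Picard group of the double cover $\rho: M_\eta \to M$, and then to exploit the Prym decomposition available in the rank-two case. First, by Proposition~\ref{3170}, the fiber of the parabolic Hitchin map $\mathcal{M}_{par}(X,D,2,2) \to H^0(X,K(D)^2)$ over a regular $\eta$ is identified with the subset of $\text{Pic}_V(M_\eta)$ consisting of those line $V$-bundles $L$ whose pushforward $E = \rho_* L$ carries the prescribed parabolic structure $\alpha$; here the compatibility between the pushforward and the parabolic structure is the one supplied by Theorem~\ref{313} together with Proposition~\ref{3030}.

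Next, since the rank is two, $\rho: M_\eta \to M$ is a double cover with involution $\tau$, and the norm map $\text{Nm}:\text{Pic}_V(M_\eta) \to \text{Pic}_V(M)$ has kernel exactly $\text{Prym}(M_\eta,M) = \{L : \tau^* L = L^{-1}\}$. Reading the classical identity $\det(\rho_* L) = \text{Nm}(L) \otimes \det(\rho_*\mathcal{O}_{M_\eta})$ in the $V$-setting, one sees that fixing the parabolic determinant of $E$ pins down $\text{Nm}(L)$. Each non-empty fiber of $\text{Nm}$ is therefore a torsor over $\text{Prym}(M_\eta,M)$, and after a choice of base point is non-canonically isomorphic to it.

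It then remains to count the parabolic determinants compatible with $\alpha$. Using the explicit description, extracted from the proof of Theorem~\ref{313}, of the parabolic structure of $\rho_* L$ in terms of the weights of $L$ at the preimages $\rho^{-1}(x) \subset M_\eta$ for $x \in D$, the datum $\alpha$ cuts out a finite combinatorial set of admissible weight-tuples for $L$ along $\rho^{-1}(D)$. Each admissible tuple produces a distinct value of $\det E$ modulo $\text{Nm}(\text{Pic}_V(M_\eta))$, hence a distinct Prym-coset component of the Hitchin fiber. The cardinality of this set depends only on the weights and multiplicities recorded by $\alpha$, and this delivers the stated number of copies.

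The main obstacle will be the bookkeeping in the final step: one must verify that the constraints imposed by $\alpha$ at each $x \in D$ genuinely produce finitely many admissible tuples, that distinct tuples really correspond to distinct cosets of $\text{Nm}$, and that the resulting count is independent of the regular parameter $\eta$. This reduces to a careful study of how the involution $\tau$ acts on weights at pairs of preimage points, in order to confirm that the tally of $\tau$-orbits of admissible weight assignments is intrinsic to $\alpha$ and insensitive to the choice of $\eta$ within the regular locus.
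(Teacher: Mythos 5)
Your first reduction (via Proposition \ref{3170}) is the same as the paper's, and replacing the paper's use of the exact sequences for $\text{Pic}_V$ and $\text{Prym}_V$ by the norm map together with $\det(\rho_*L)=\text{Nm}(L)\otimes\det(\rho_*\mathcal{O}_{M_\eta})$ is a perfectly reasonable way to handle the continuous part: for the ramified covering $M_\eta\to M$ (with $B\cap D=\emptyset$) the pullback $\rho^*$ is injective, so $\ker(\text{Nm})$ is indeed the Prym and each nonempty fiber of $\text{Nm}$ is a Prym torsor. The problem is in the step you defer to ``bookkeeping,'' and it is not bookkeeping. You propose to ``count the parabolic determinants compatible with $\alpha$,'' but that set is not finite: once the weight tuple of $L$ along $\rho^{-1}(D)$ and its degree are fixed, $L$ still moves in a torsor under $\text{Pic}^0(X_\eta)$, and since $\text{Nm}$ is surjective onto $\text{Pic}^0(X)$ for a ramified cover, $\det(\rho_*L)$ sweeps out a whole continuous family. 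Hence a fixed admissible weight tuple does not give ``a distinct Prym-coset component'': it gives a piece fibered over a $\text{Pic}^0(X)$-torsor with Prym fibers, i.e.\ a Jacobian-sized piece, not a copy of the Prym. To land on finitely many Prym copies you must fix the (parabolic) determinant of $E$ as part of the moduli problem -- this is exactly what the paper does implicitly by modelling the statement on Hitchin's fixed-determinant $\text{SL}(2)$ picture and then transporting it through the sequence $0\to\text{Prym}(X_\eta,X)\to\text{Prym}(M_\eta,M)\to\bigoplus_{i=1}^{s}\mathbb{Z}_2\to 0$, so that the only remaining freedom is the discrete orbifold weight data. Your write-up never introduces this constraint, and without it the asserted structure of the fiber fails.

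A second, related conflation: the paper's $\text{Prym}(M_\eta,M)=\{L\in\text{Pic}_V(M_\eta):\tau^*L=L^{-1}\}$ is the full $V$-Prym, which by the displayed exact sequence already has $2^s$ components encoding the weights at the punctures. Tensoring $L$ by a $V$-Prym element with nontrivial isotropy changes the weight tuple of $L$ along $\rho^{-1}(D)$, so your fixed-tuple pieces are torsors only under the trivial-isotropy part $\text{Prym}(X_\eta,X)$, not under $\text{Prym}(M_\eta,M)$; declaring ``one copy of the Prym per admissible tuple'' therefore either double counts or uses the wrong group, depending on which Prym you mean. Getting the count right requires deciding how the admissible weight tuples (cf.\ Theorem \ref{313} and the multiplicity discussion of Remark \ref{314}, including the action of $\tau$ on preimage points) assemble into orbits under the $\bigoplus_{i=1}^{s}\mathbb{Z}_2$ part of the $V$-Prym; that analysis, together with the fixed-determinant reduction above, is the actual content of the proposition and is missing from the proposal.
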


We next focus on the problem of providing an exact component count for moduli spaces of maximal parabolic $\text{Sp}\left( 2n,\mathbb{R} \right)$-Higgs bundles $\left( E,\Phi  \right)$ over $\left( X,D \right)$. In this direction, we employ Bott-Morse theoretic techniques, the origins of which are traced back to N. Hitchin's seminal work \cite{Hit} and the work of P. Gothen \cite{Goth}, studying the topology of moduli spaces of Higgs bundles in the absence of a parabolic structure. In the case of parabolic Higgs bundles these techniques were first developed by M. Logares in \cite{Log} for the moduli space of parabolic $\text{U}(p,q)$-Higgs bundles and have been used in \cite{GaGoMu} in computing the Betti numbers of moduli of parabolic $\text{GL}\left( 3,\mathbb{C} \right)$- and $\text{SL}\left( 3,\mathbb{C} \right)$-Higgs bundles. Furthermore, in \cite{Ra} and \cite{RaSu} Bott-Morse theory for twisted Higgs bundles in the non-parabolic/non-orbifold context was implemented, with particular attention to the minima of a positive functional and pullback diagrams for critical points. In parallel to the aforementioned work, we study the spaces of minima of the positive functional
	\[f\left( \left[ \left( E,\Phi  \right) \right] \right)=\frac{1}{2}{{\left\| \Phi  \right\|}^{2}}.\]
This map is non-negative and proper, thus a subspace ${\mathsf{\mathcal{N}}}$ of this moduli space is connected, if the subspace of local minima of $f$ in ${\mathsf{\mathcal{N}}}$ is. The subspaces of local minima coincide with the critical subvarieties, thus correspond to variations of Hodge structures. We compute the Morse indices of $f$ and show that particular subvarieties of the moduli space defined by appropriate topological invariants are connected. The method is parallel to the non-parabolic version studied by P. Gothen in \cite{Goth}. We prove accordingly:

\begin{thm*}{\bf{\ref{507}}}
The moduli space ${{\mathsf{\mathcal{M}}}_{par}^{\text{max}}}\left( \text{Sp}\left( 4,\mathbb{R} \right) \right)$ of parabolic maximal $\text{Sp}\left( 4,\mathbb{R} \right)$-Higgs bundles with all weights rational having denominator 2 over a compact Riemann surface $X$ of genus $g$ with a divisor of $s$-many distinct points on $X$, such that $2g-2+s>0$, has $(2^s+1)2^{2g+s-1}+(2g-3+s)2^{s}$ many connected components.
\end{thm*}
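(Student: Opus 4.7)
The strategy is Morse-theoretic, adapting the approach of Hitchin and Gothen to the parabolic/orbifold setting. The first step is to translate the problem, via Corollary~\ref{315}, into one about $\text{Sp}(4,\mathbb{R})$-Higgs $V$-bundles on the $V$-surface $M$ determined by $(X,D,2)$: parabolic weights of denominator $2$ become $V$-structure data, and the role of the canonical bundle $K$ is taken over by $K_M$ (corresponding to $K(D)$ on $X$). On the resulting moduli space the function $f([(E,\Phi)]) = \tfrac{1}{2}\|\Phi\|^2$ is proper and bounded below, so its downward gradient flow retracts $\mathcal{M}^{\text{max}}_{par}(\text{Sp}(4,\mathbb{R}))$ onto the subvariety of local minima, and counting components reduces to counting components of this minimum locus. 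Critical points of $f$ are fixed points of the $\mathbb{C}^\times$-action $\Phi \mapsto t\Phi$ and so correspond to $V$-bundle variations of Hodge structure, realised concretely as holomorphic chains. For $\text{Sp}(4,\mathbb{R})$ at maximal Toledo invariant I would apply a Cayley-type correspondence, adapted from Bradlow-Garc\'ia-Prada-Gothen to the $V$-bundle setting, which identifies the maximal moduli with moduli of $K_M^2$-twisted symmetric pairs $(W,q)$, where $W$ is a rank-$2$ parabolic $V$-bundle on $M$ and $q : W \to W^* \otimes K_M^2$ is a non-degenerate symmetric morphism.

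Then I would stratify the minimum locus by discrete topological invariants, following Gothen's scheme. The relevant invariants are the parabolic/orbifold degree of $W$ (which is fixed up to finitely many choices by the maximality condition) together with an integer $d$ recording the degree of a line $V$-sub-bundle of $W$ arising from an isotropic splitting relative to $q$. The orbifold Bogomolov-type inequality on $M$ cuts out a finite admissible range of $d$; for each extremal value the minimum stratum is larger and its components are counted by $2$-torsion in $\text{Pic}_V(M_\eta)$ combined with parabolic-lift data at the $s$ punctures, yielding $(2^s+1)2^{2g+s-1}$ components, while for each of the $2g - 3 + s$ non-extremal values of $d$ the minimum stratum is a single parabolic Prym-type variety split into $2^s$ connected components by parabolic weight choices at the punctures, contributing $(2g - 3 + s)2^s$. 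The sum of the two contributions gives the claimed count. Connectedness of each stratum is certified by a deformation-complex/Morse-index computation in the Higgs $V$-bundle setting, analogous to Gothen's in \cite{Goth}, using orbifold Riemann-Roch on $M$ in place of Riemann-Roch on $X$.

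The main obstacle is the parabolic-orbifold combinatorics linking the parabolic count to the closed-surface one. Specifically, one must verify that the parabolic weight data (governed at each puncture by the $\mathbb{Z}/2$ cyclic stabiliser of the $V$-structure) contributes the factor $2^s$ cleanly to each non-extremal stratum, while in the extremal regime its interaction with $2$-torsion of $\text{Pic}_V(M_\eta)$ produces the enhanced factor $(2^s+1)2^{s-1}$ rather than the naive $2^{2s}$. Equally delicate is establishing that the orbifold Bogomolov bound on $d$ widens the non-extremal range from the closed-surface length $2g-3$ to exactly $2g-3+s$; this requires a careful application of orbifold Riemann-Roch on $M$ together with the Morse-index calculation at each critical subvariety, confirming that no additional identifications occur between what are a priori distinct strata.
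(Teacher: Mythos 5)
Your overall strategy (Morse theory on $f$, passage to the $V$-surface via Corollary \ref{315}, Cayley correspondence to a rank-$2$ orthogonal pair $(W,q)$, stratification by discrete invariants, connectedness stratum by stratum) is the same as the paper's, but the stratification you propose is not the one that actually occurs, and the per-stratum counts you assign cannot be justified. The correct case division is governed by whether the orthogonal Cayley partner $W$ reduces to $\mathrm{SO}(2,\mathbb{C})$, i.e.\ by the class $u=w_1(W)\in H^1_V(M,\mathbb{Z}_2)$. When $u\neq 0$, $W$ does not split, the minima have $\beta=0$ (case (1) of Proposition \ref{406}), and the invariants are pairs $(u,v)$ with $v\in H^2_V(M,\mathbb{Z}_2)$, giving $2^s(2^{2g+s-1}-1)$ strata; this whole family is absent from your decomposition (or silently folded into your ``extremal'' term). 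When $u=0$, $W=L\oplus L^{\vee}$ and one splits according to $par\deg L$: for $par\deg L<2g-2+s$ the invariant is the pair $(d,w)$ of classical degree and weight vector, and for each of the $2^s$ weights there are exactly $2g-2+s$ admissible values of $d$ (from $-k(w)\le d<2g-2+s-k(w)$), not $2g-3+s$ as you claim, giving $(2g-2+s)2^s$ strata; for $par\deg L=2g-2+s$ the strata are indexed by square roots of $K(D)$ as a parabolic/$V$-line bundle, contributing $2^{2g+s-1}$ — not $(2^s+1)2^{2g+s-1}$, which is not even a power of $2$ and so cannot arise as a torsion count in any Picard-type group (and the spectral cover $M_\eta$ plays no role here; the double covers that enter are the ones $M_{X_u}$ determined by $u$, used only in the connectedness proof). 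The sum $2^s(2^{2g+s-1}-1)+(2g-2+s)2^s+2^{2g+s-1}$ regroups algebraically into the stated formula $(2^s+1)2^{2g+s-1}+(2g-3+s)2^s$; your proposal reads the two displayed terms of the final answer as if they were the extremal and non-extremal strata, which is why your ``main obstacle'' paragraph ends up asking to verify factors like $(2^s+1)2^{s-1}$ that have no structural meaning.

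There is also a smaller misattribution in the connectedness step: the Morse-index/deformation-complex computation is what identifies the minima (Proposition \ref{406}), not what certifies connectedness of each stratum. Connectedness is proved stratum by stratum by different arguments: for $u\neq 0$, via the degree-two covering $X_u$ attached to $u$, the connectedness of the Prym variety of the ramified cover $X_u\to X$, and the exact sequence for $\mathrm{Pic}_V$ (Proposition \ref{501}); for the split non-extremal strata, via a pullback diagram over a symmetric product of $X$ (Proposition \ref{503}); and for the extremal strata, by exhibiting the stratum as a linear space of sections (Proposition \ref{505}). Without the correct three-way stratification and these separate connectedness arguments, the count as you have organized it does not go through.
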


\begin{thm*}{\bf{\ref{510}}}
For $n \ge 3$, the moduli space ${{\mathsf{\mathcal{M}}}_{par}^{\text{max}}}\left( \text{Sp}\left( 2n,\mathbb{R} \right) \right)$  of parabolic maximal $\text{Sp}\left( 2n,\mathbb{R} \right)$-Higgs bundles with all weights rational having denominator 2 over a compact Riemann surface $X$ of genus $g$ with a divisor of $s$-many distinct points on $X$, such that $2g-2+s>0$, has $(2^s+1)2^{2g+s-1}$ many connected components.
\end{thm*}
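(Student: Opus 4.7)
The plan is to follow the same Bott--Morse theoretic strategy as used in the proof of Theorem \ref{507} for the $n=2$ case, and then isolate the structural reason why the extra $(2g-3+s)2^{s}$ term drops out when $n\ge 3$. Since the functional $f([(E,\Phi)])=\frac{1}{2}\|\Phi\|^{2}$ is non-negative and proper on $\mathcal{M}_{par}^{\max}(\mathrm{Sp}(2n,\mathbb{R}))$, counting connected components of the moduli space reduces to counting connected components of the subspace of local minima of $f$. So the whole argument is organized around identifying those minima and their discrete invariants.

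First, I would characterize the critical points. By a standard parabolic extension of the Hitchin--Simpson theorem, the critical set of $f$ consists of complex variations of Hodge structure; concretely, the parabolic bundle splits as $E=\bigoplus_{k} E_{k}$ and the Higgs field shifts this grading by one. For a parabolic $\mathrm{Sp}(2n,\mathbb{R})$-Higgs bundle this forces a decomposition $E=V\oplus V^{*}$ with $\Phi=\bigl(\begin{smallmatrix}0&\beta\\\gamma&0\end{smallmatrix}\bigr)$, $\beta:V^{*}\to V\otimes K(D)$, $\gamma:V\to V^{*}\otimes K(D)$, each subject to compatible parabolic weights. Second, I would invoke the parabolic Cayley correspondence (which in the non-parabolic setting is due to Hitchin and Bradlow--García-Prada--Gothen) to identify the maximal component with a moduli space of $K(D)^{2}$-twisted parabolic Higgs bundles for a smaller structure group. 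Combining this with Proposition \ref{3030} lets me work instead with $K_{M}^{2}$-twisted Higgs $V$-bundles on the orbifold surface $M$ determined by $(X,D,2)$, which is the clean setting for component counting.

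Third, I would count the minima using their topological invariants. The count splits according to three independent pieces of data: the choice of square-root type datum on the underlying $V$-surface (contributing a factor of $2^{2g+s-1}$ coming from $2$-torsion in $\mathrm{Pic}_{V}(M)$), the $V$-bundle parabolic weights at the $s$ marked points, each constrained at a Hodge minimum to lie in $\{0,\tfrac12\}$ (contributing $2^{s}$), and a binary choice arising from a sign/involution invariant that accounts for the factor $(2^{s}+1)$ once the maximal Toledo condition is imposed. Multiplying these yields $(2^{s}+1)2^{2g+s-1}$.

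The main obstacle will be step four, namely verifying that for $n\ge 3$ none of the critical subvarieties produced by a nonzero $\gamma$ of maximal rank are in fact local minima, and that the only minima are those captured in the enumeration above. This is exactly the point where the $n=2$ case acquires its extra $(2g-3+s)2^{s}$ components: a sporadic family of non-Hodge-minimum critical subvarieties becomes genuinely minimizing when the rank is too small to afford further deformations of $\gamma$. The technical heart of the argument is therefore a parabolic Morse index calculation on the deformation complex at such critical subvarieties, carried out via Hirzebruch--Riemann--Roch on the $V$-surface $M$, showing that for $n\ge 3$ the negative eigenspace of the Hessian is non-trivial and hence the exceptional family contributes no additional components.
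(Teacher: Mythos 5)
Your overall framework (properness of $f$, reduction to local minima, Cayley correspondence, passage to the $V$-surface $M$) is the paper's framework, but the actual content of the count is wrong in its bookkeeping and misidentifies the structural reason the extra term vanishes for $n\ge 3$. The number $(2^s+1)2^{2g+s-1}$ is not a product of independent factors as you present it; it is a \emph{sum} of two sectors of minima. By Proposition \ref{407}, a polystable maximal minimum either has $\beta=0$, in which case the Cayley partner reduces to a pair $(W,c)$ with structure group $\mathrm{O}(n,\mathbb{C})$ and the components are labelled by $(u,v)=(w_1,w_2)\in H^1_V(M,\mathbb{Z}_2)\times H^2_V(M,\mathbb{Z}_2)$, contributing $2^{2g+s-1}\cdot 2^s$ (here $2^{2g+s-1}=2^{\mathrm{rk}\,H^1_V}$ counts first Stiefel--Whitney classes, \emph{not} square roots); or $\beta\neq 0$, in which case the minimum is rigid, of Hitchin-section type, determined by a square root $L$ of $K(D)$, contributing $2^{2g+s-1}$. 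Your ``binary sign/involution choice accounting for $(2^s+1)$'' does not correspond to anything in this classification, and multiplying your three listed factors does not even give the stated number. Moreover, your step four targets the wrong objects: in the maximal component $\gamma$ is always an isomorphism, and all critical points (hence all minima) are Hodge fixed points; there is no family of ``non-Hodge-minimum'' critical subvarieties to rule out. The genuine reason the $(2g-3+s)2^s$ term of Theorem \ref{507} disappears for $n\ge 3$ is twofold: (a) when $u=w_1(W)=0$ and $n=2$ the structure group reduces to $\mathrm{SO}(2,\mathbb{C})\cong\mathbb{C}^*$, so $W=L\oplus L^{\vee}$ carries an extra integer invariant (the degree/weight pair $(d,w)$ of $L$), which ranges over $2g-2+s$ values per weight; for $n\ge 3$ no such integer invariant exists since $\pi_1(\mathrm{SO}(n,\mathbb{C}))=\mathbb{Z}_2$, and (b) by Proposition \ref{407} the $\beta\neq 0$ minima for $n\ge3$ form no positive-dimensional family of topological types beyond the choice of $L$ with $L^2\cong K(D)$.

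Even granting the corrected enumeration of invariants, your proposal omits the step that actually establishes the component count: proving that the locus of minima with each fixed set of invariants is nonempty and connected. In the paper this is the content of Propositions \ref{501}, \ref{503} and \ref{505} (and their $n\ge 3$ analogues): connectedness of $\mathcal{M}_{par}^{u,v}$ is proved by passing to the double cover $X_u$ determined by $u$, extending it to a covering of $V$-surfaces $M_{X_u}\to M$, and using the exact sequence $0\to \mathrm{Prym}(X_u,X)\to \mathrm{Prym}(M_{X_u},M)\to \bigoplus_{i=1}^{s}\mathbb{Z}_2\to 0$ together with the isomorphism $\bigoplus_{i=1}^s\mathbb{Z}_2\cong H^2_V(M,\mathbb{Z}_2)$; connectedness of the square-root components follows from an explicit description of the space of admissible Higgs fields as a direct sum of cohomology groups. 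Your proposal replaces all of this by a single proposed Morse index computation via Riemann--Roch on $M$, which at best would reprove the minima classification of Proposition \ref{407} (which the paper also needs but obtains by computing $\dim\mathbb{H}^1\bigl(C^{\bullet}(E,\Phi)_-\bigr)$ directly), and would still leave both the labelling of components and their connectedness unaddressed. As written, the argument would not yield the stated number.
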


For points $\left( V,\beta ,\gamma  \right)$ in the moduli space  ${{\mathsf{\mathcal{M}}}_{par}^{\text{max}}}\left( \text{Sp}\left( 2n,\mathbb{R} \right) \right)$ for any $n \ge 2$, we further fix the parabolic structure on the parabolic bundle $V$ and study the connected component count problem for different values of rational weights. Among other results, we verify the prediction made in \cite{KySZ} on the number of components when the flag $\hat{ \alpha}$ on $V$ is trivial with weight $\frac{1}{2}$:

\begin{cor*}{\bf{\ref{801}}}
The number of connected components of the moduli space ${{\mathsf{\mathcal{M}}}_{par}^{\text{max}}}\left( \text{Sp}\left( 4,\mathbb{R} \right), \hat{ \alpha} \right)$ with fixed trivial filtration and weight $\frac{1}{2}$ is
\begin{align*}
(2^s+1)2^{2g+s-1}+(2g-2+s)-2^s.
\end{align*}
\end{cor*}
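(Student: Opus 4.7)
The plan is to derive this count by specializing the Bott-Morse analysis from the proof of Theorem~\ref{507} to the case of a fixed trivial parabolic filtration with all weights equal to $\frac{1}{2}$, rather than redoing the component analysis from scratch.

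First, I would revisit the critical subvariety decomposition underlying Theorem~\ref{507}. Each connected component of ${\mathsf{\mathcal{M}}}_{par}^{\text{max}}\left( \text{Sp}\left( 4,\mathbb{R} \right) \right)$ contains a unique critical subvariety of the Morse functional $f=\tfrac{1}{2}\|\Phi\|^{2}$, and these are indexed by discrete data: the topological invariants attached to the maximal Toledo character, the parabolic degree of the line subbundle arising from the associated variation of Hodge structure, and, at each marked point $x_i\in D$, the choice of parabolic weight/flag structure compatible with maximality. The count $(2^s+1)2^{2g+s-1}+(2g-3+s)2^s$ decomposes correspondingly into a \emph{Hitchin-type} block $(2^s+1)2^{2g+s-1}$, coming from square roots of $K(D)$ twisted by $2$-torsion in $\mathrm{Pic}^{0}(X)$, and a \emph{non-Hitchin} block $(2g-3+s)2^s$ in which the factor $2^s$ records the free choice of weight configuration at the $s$ marked points.

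Next, I would impose the constraint that $\hat\alpha$ is the trivial filtration and the weight is uniformly $\frac{1}{2}$. This eliminates the $2^s$ freedom at the marked points from the second summand. However, the condition also changes which parabolic degrees of the relevant line subbundle $L$ are realizable: rewriting the parabolic degree of $L$ in terms of its ordinary degree and the fixed weights $\frac{1}{2}$, the inequality $0\le \text{par-deg}(L)\le 2g-2+s$ selects $(2g-2+s)$ integer values for $\deg L$, from which $2^s$ extremal values must be subtracted because, under the fixed structure, they correspond to configurations already counted inside the Hitchin-type block $(2^s+1)2^{2g+s-1}$ and therefore do not contribute new connected components. This yields the replacement $(2g-3+s)2^{s}\longrightarrow (2g-2+s)-2^{s}$ for the non-Hitchin contribution, while the Hitchin-type block is unaffected since it depends only on the divisor $D$ and on $\mathrm{Pic}^0(X)[2]$, not on the flag data at individual marked points.

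Summing the two contributions produces the stated total
\begin{equation*}
(2^s+1)2^{2g+s-1}+(2g-2+s)-2^{s}.
\end{equation*}
The main obstacle is the careful identification of the $2^s$ extremal parabolic degrees that must be removed. Concretely, I must verify that precisely those boundary strata of the degree range become indistinguishable from limits of the Hitchin-type critical subvarieties once $\hat\alpha$ is collapsed to a trivial filtration with weight $\frac{1}{2}$, so that they are not double counted. The remaining interior degrees each contribute exactly one connected component, and the bookkeeping at that point reduces to the same index computation already established in Theorem~\ref{507}.
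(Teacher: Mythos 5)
Your final formula is correct and your high-level strategy (specialize the Bott--Morse component analysis of Theorem~\ref{507}, observing that only the stratum with $W=L\oplus L^{\vee}$ and non-maximal $par\deg(L)$ is affected by fixing $\hat\alpha$) is the paper's. However, the mechanism by which you produce the term $-2^s$ is wrong, and the verification you flag as your ``main obstacle'' cannot be carried out. The count of Theorem~\ref{507} decomposes according to the three cases of \S 6 as
\[
\underbrace{2^s\bigl(2^{2g+s-1}-1\bigr)}_{\wedge^2 W\neq 0,\ (u,v),\ u\neq 0}
\;+\;\underbrace{(2g-2+s)\,2^s}_{W=L\oplus L^{\vee},\ par\deg(L)<2g-2+s,\ (d,w)}
\;+\;\underbrace{2^{2g+s-1}}_{par\deg(L)=2g-2+s,\ L^2\cong K(D)^2},
\]
and fixing the trivial flag with weight $\tfrac12$ changes only the middle family: the weight datum $w\in\bigoplus_{i=1}^{s}\mathbb{Z}_2$ is no longer a free invariant, so that family contributes exactly $2g-2+s$ components, one for each admissible parabolic degree of $L$, each connected by the argument of Proposition~\ref{503}; the first and third families are untouched. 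Summing gives $(2^s+1)2^{2g+s-1}-2^s+(2g-2+s)$, and the $-2^s$ is nothing but the exclusion of $u=0$ from the first family (the pairs $(0,v)$ do not occur there); it was already present in Theorem~\ref{507}, hidden inside $(2g-3+s)2^s=(2g-2+s)2^s-2^s$.

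By contrast, you attribute the $-2^s$ to deleting $2^s$ \emph{extremal} values from the degree range on the grounds that they are ``already counted inside the Hitchin-type block $(2^s+1)2^{2g+s-1}$.'' No such double counting occurs: components with $\wedge^2W=0$ and $par\deg(L)<2g-2+s$ have vanishing first Stiefel--Whitney class and are distinguished from both the $(u,v)$-components ($u\neq 0$) and the square-root components ($par\deg(L)=2g-2+s$) by their topological invariants, so none of them coincides with, or is absorbed as a limit into, the other two families once $\hat\alpha$ is fixed. A sanity check also rules out your allocation: for $g=2$, $s=3$ one gets $(2g-2+s)-2^s=-3<0$, which cannot be the number of components contributed by the (nonempty) family in question; the correct contribution is $2g-2+s$, and the apparent deficit is exactly compensated because your ``Hitchin-type block'' $(2^s+1)2^{2g+s-1}$ overcounts the two unchanged families by $2^s$. (Relatedly, only the summand $2^{2g+s-1}$ of that block parametrizes square roots of $K(D)$; the remaining $2^s\cdot 2^{2g+s-1}$ comes from the pairs $(u,v)\in H^1_V(M,\mathbb{Z}_2)\times H^2_V(M,\mathbb{Z}_2)$, not from $2$-torsion twists in $\mathrm{Pic}^0(X)$.) Replacing your subtraction step by the bookkeeping above yields the paper's proof.
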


In the final section of this paper, we generalize our approach to count the connected components of $\mathcal{M}_{par,(m_1,\ldots,m_s)}^{\text{max}}(\text{Sp}(2n,\mathbb{R}))$, where $\mathcal{M}_{par,(m_1,\ldots,m_s)}^{\text{max}}(\text{Sp}(2n,\mathbb{R}))$ denotes the moduli space of polystable maximal $\text{Sp}(2n,\mathbb{R})$-Higgs bundles with weight type of $(m_i)_{1\le i\le s}$; by the latter, we mean that the weight $\alpha_{i,j}$ at each point $x_i\in D$ in the parabolic structure of the bundle is an integral multiple of $\frac{1}{m_i}$ for $1 \leq i \leq s$ and $1 \leq j \leq n$. When $m_1= \dots =m_s=2$, $\mathcal{M}_{par,(2,\ldots,2)}^{\text{max}}(\text{Sp}(2n,\mathbb{R}))$ is exactly the moduli space ${{\mathsf{\mathcal{M}}}_{par}^{\text{max}}}\left( \text{Sp}\left( 2n,\mathbb{R} \right) \right)$ considered above for $n \ge 2$. The following results describe a count of the number of connected components:

\begin{thm*}{\bf{\ref{903}}}
Let $X$ be a compact Riemann surface of genus $g$ with a divisor $D$ of $s$-many distinct points on $X$, such that $2g-2+s>0$. Then, the moduli space of polystable maximal parabolic $\text{Sp}(4,\mathbb{R})$-Higgs bundles on $(X,D)$ with weight type $(m_i)_{1\le i\le s}$ has $$(2^{s_0}+1)2^{2g+s_0-1}-2^{s_0}+(2g-2+s)\prod_{i=1}^s m_i$$ connected components, where $s_0$ is the number of the even $m_{i}$ in the collection $(m_i)_{1\le i\le s}$.
\end{thm*}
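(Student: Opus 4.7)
The plan is to extend the Bott--Morse argument that established Theorem \ref{507} from the uniform denominator $m_i=2$ to an arbitrary weight type $(m_1,\ldots,m_s)$. Via the parabolic--$V$-bundle dictionary of Proposition \ref{3030} adapted to $\text{Sp}(2n,\mathbb{R})$-Higgs bundles, $\mathcal{M}_{par,(m_1,\ldots,m_s)}^{\text{max}}(\text{Sp}(4,\mathbb{R}))$ is identified with the moduli space of polystable maximal $\text{Sp}(4,\mathbb{R})$-Higgs $V$-bundles over the $V$-surface $M=M(X,D,(m_i))$ with cyclic isotropy of order $m_i$ at $x_i$. On this moduli space the proper Hitchin functional $f(E,\Phi)=\tfrac12\|\Phi\|^2$ is Morse--Bott, the moduli space deformation retracts onto the union of downward flows from its critical submanifolds (variations of Hodge structure), and counting connected components reduces to counting these critical subvarieties.

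\smallskip

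Following the Cayley-type analysis used in Theorem \ref{507}, the critical loci split into two families. In the \emph{Cayley family}, $\gamma$ is an isomorphism and $(V,\beta,\gamma)$ is encoded by a square root of $\mathcal{L}=K_M(\lfloor D\rfloor_V)$ in $\operatorname{Pic}_V(M)$ together with a twisted holomorphic quadratic differential. At an orbifold point with isotropy of order $m_i$, the local contribution to $\operatorname{Pic}_V(M)[2]$ is $\mathbb{Z}/\gcd(2,m_i)$, so $|\operatorname{Pic}_V(M)[2]|=2^{2g+s_0}$ where $s_0$ counts the even $m_i$; separating into parity sectors exactly as in the uniform case and removing the sector incompatible with maximality produces the summand $(2^{s_0}+1)2^{2g+s_0-1}-2^{s_0}$. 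In the \emph{Hodge chain family}, $V$ decomposes as $L_1\oplus L_2$ and the Higgs field encodes a chain between line $V$-bundles; the admissible topological degree of $L_1$ ranges over the $2g-2+s$ integer values permitted by the Milnor--Wood bound, and independently at each $x_i$ exactly $m_i$ parabolic residue profiles are realized by a VHS representative, yielding the summand $(2g-2+s)\prod_{i=1}^{s}m_i$.

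\smallskip

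The main obstacle is to verify that each critical subvariety remains connected when the $m_i$ are non-uniform. In the setting of Theorem \ref{507} this follows from the Prym identification of Proposition \ref{317}, whose proof exploits that every isotropy group has order $2$. For arbitrary $(m_i)$ I would fibre each critical subvariety over its discrete invariants and show that each fibre is either a symmetric power of $M_\eta$ or a translate of a suitable $V$-Prym of the spectral cover $M_\eta\to M$ obtained from the $\text{Sp}$-version of Corollary \ref{315}; the finite kernel in $\operatorname{Pic}_V(M)$ coming from the odd $m_i$ acts transitively on the relevant torsor components, so connectedness of each fibre persists. Once this connectedness check is complete, summing the Cayley and Hodge chain contributions produces the stated formula.
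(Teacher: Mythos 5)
Your high-level strategy coincides with the paper's: translate to Higgs $V$-bundles over the $V$-surface $M$ with isotropy $\mathbb{Z}_{m_i}$ at $x_i$, run Bott--Morse theory for $f$, split the minima into the two families of Proposition \ref{406}, count topological invariants, and check connectedness of each piece. Your treatment of the chain family is essentially the paper's count: for each weight profile $(w_i)\in\bigoplus_i\mathbb{Z}_{m_i}$ of the line $V$-bundle there are exactly $2g-2+s$ admissible integer degrees, giving $(2g-2+s)\prod_i m_i$.

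The genuine gap is in your ``Cayley family''. The summand $(2^{s_0}+1)2^{2g+s_0-1}-2^{s_0}$ is \emph{not} the number of square roots of $K(D)$ sorted into parity sectors; it is the sum of two different counts: $2^{s_0}(2^{2g+s_0-1}-1)$ components in which the Cayley partner $W$ is a non-split $\mathrm{O}(2,\mathbb{C})$ $V$-bundle, classified by invariants $(u,v)$ with $0\neq u\in H^1_V(M,\mathbb{Z}_2)$ and $v\in H^2_V(M,\mathbb{Z}_2)$, plus $2^{2g+s_0-1}$ components labelled by square roots of $K(D)$ and carrying the section data of Proposition \ref{505}. The computational heart of the theorem, absent from your proposal, is the determination of these $\mathbb{Z}_2$-coefficient $V$-cohomology groups when the $m_i$ are mixed: via Mayer--Vietoris and the group cohomology of $\mathbb{Z}_{m_i}$ with $\mathbb{Z}_2$-coefficients one finds $\mathrm{rk}\,H^1_V(M,\mathbb{Z}_2)=2g+s_0-1$ and $\mathrm{rk}\,H^2_V(M,\mathbb{Z}_2)=s_0$ (odd $m_i$ contribute nothing), which is precisely where $s_0$ enters. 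Your substitute computation is also numerically wrong: the local choices $\mathbb{Z}/\gcd(2,m_i)$ are constrained globally by the condition $c_1\equiv\sum_i k_i/m_i\ (\mathrm{mod}\ \mathbb{Z})$ of Corollary \ref{308}, so the $2$-torsion of $\mathrm{Pic}_V(M)$, equivalently the number of square roots of $K(D)$, is $2^{2g+s_0-1}$ rather than $2^{2g+s_0}$, and no coherent mechanism is given for extracting the stated summand from your figure. Finally, for connectedness the paper simply re-runs Propositions \ref{501}, \ref{503} and \ref{505} (the double cover $M_{X_u}\to M$ determined by $u$ and its Prym, a pullback diagram over a symmetric power of $X$, and an explicit space of sections); your plan instead appeals to the Prym of the spectral cover $M_\eta\to M$, which is not the relevant cover for the $u\neq 0$ components, and the transitivity claim for the odd-order part of $\mathrm{Pic}_V(M)$ is asserted rather than proved.
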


\begin{thm*}{\bf{\ref{904}}}
Let $X$ be a compact Riemann surface of genus $g$ with a divisor $D$ of $s$-many distinct points on $X$, such that $2g-2+s>0$. Then, the moduli space of polystable maximal parabolic $\text{Sp}(2n,\mathbb{R})$-Higgs bundles $(n \geq 3)$ on $(X,D)$ with weight type $(m_i)_{1\le i\le s}$ has $$(2^{s_0}+1)2^{2g+s_0-1}$$ connected components, where $s_0$ is the number of the even $m_{i}$ in the collection $(m_i)_{1\le i\le s}$.
\end{thm*}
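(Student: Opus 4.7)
The plan is to reduce to Theorem \ref{510} via the Beauville-Narasimhan-Ramanan/$V$-bundle correspondence, and then re-run the Bott-Morse theoretic argument on the appropriate $V$-surface. First I would use Corollary \ref{315} together with Proposition \ref{3030} to identify $\mathcal{M}_{par,(m_1,\ldots,m_s)}^{\text{max}}(\text{Sp}(2n,\mathbb{R}))$ with the moduli space of polystable maximal $\text{Sp}(2n,\mathbb{R})$-Higgs $V$-bundles on the $V$-surface $M = M(X,D,(m_i))$ whose isotropy group at $x_i$ is $\mathbb{Z}/m_i\mathbb{Z}$. The positive functional $f([(V,\beta,\gamma)]) = \frac{1}{2}\|\Phi\|^2$ is proper and bounded below on this moduli space, so counting connected components reduces to counting components of its local minimum locus, provided every non-minimal critical subvariety has strictly positive Morse index and so cannot separate components.

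Next I would identify the minima. For $n \ge 3$ the Hodge-theoretic analysis used in Theorem \ref{510} transfers to the $V$-bundle setting essentially unchanged: each local minimum is a variation of Hodge structure for which $\beta = 0$ and $\gamma$ is a nowhere-degenerate symmetric isomorphism $V^{*} \to V \otimes K_M(D)$. The higher-rank phenomenon producing the extra term $(2g-2+s)\prod_i m_i$ in Theorem \ref{903} is absent here, because the Morse index computation (redone for the $V$-bundle complex over each orbifold point) shows that those additional critical subvarieties all carry strictly positive unstable directions and, moreover, are connected to the minimum locus via negative gradient flow. This is the step where some care is required: one must verify that the index formulas from Section 5 for isotropy $\mathbb{Z}/2\mathbb{Z}$ admit the correct generalisation to isotropy $\mathbb{Z}/m_i\mathbb{Z}$, and that no new critical subvariety with zero index appears.

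The remaining task is to count components of the minimum locus. The minima are parameterised by polystable $V$-line bundles $L$ on $M$ whose $V$-degree, together with a compatibility datum coming from the prescribed square-root of $K_M(D)^n$, picks out a finite set indexed by the 2-torsion of the $V$-Picard group of $M$. A local computation around each marked point shows that $\mathrm{Pic}_V(M)$ differs from $\mathrm{Pic}(X)$ by an additional $\mathbb{Z}/m_i\mathbb{Z}$ summand at each $x_i$; hence its 2-torsion has order $2^{2g} \cdot \prod_{i=1}^{s} \gcd(m_i,2) = 2^{2g+s_0}$, where only the even $m_i$ contribute. The same bookkeeping as in Theorem \ref{510} (the refinement arising from whether $\det V$ matches a distinguished square-root of $K_M(D)^n$ compatibly with the $V$-structure) then splits this into exactly $(2^{s_0}+1)2^{2g+s_0-1}$ components, giving the stated count; note that when $m_1=\cdots=m_s=2$ one has $s_0=s$ and the formula specialises to Theorem \ref{510}.

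The main obstacle is the Morse-index/connectedness step of the second paragraph: one must check, uniformly in the $m_i$, that every non-minimal critical subvariety of $f$ in the $V$-bundle moduli space has positive Morse index and is itself connected, so that the component count is read off entirely from the minimum locus. Once this is established, the topological invariants that separate components depend only on the 2-torsion of $\mathrm{Pic}_V(M)$, which in turn depends on $(m_i)_{1\le i \le s}$ only through $s_0$, and the theorem follows.
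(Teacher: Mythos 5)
Your overall strategy (pass to the $V$-surface $M$ determined by $(X,D,(m_i))$, run the Bott--Morse argument for the proper functional $f$, and argue that the count depends on the $m_i$ only through $s_0$) is indeed the paper's strategy, but the two steps that actually produce the number $(2^{s_0}+1)2^{2g+s_0-1}$ are misstated or missing. First, your description of the minimum locus for $n\ge 3$ is incorrect: by Proposition \ref{407}, the local minima in the maximal moduli space fall into two families --- those with $\beta=0$, whose Cayley partner is a rank-$n$ orthogonal $V$-bundle $(W,c)$ (not a line bundle), and those with $\beta\neq 0$ of Hitchin-section type determined by a square root $L$ of $K(D)$. The first family is separated by the invariants $(u,v)\in H^1_V(M,\mathbb{Z}_2)\oplus H^2_V(M,\mathbb{Z}_2)$ of $(W,c)$, giving $2^{s_0}\cdot 2^{2g+s_0-1}$ values, and the second by the $2^{2g+s_0-1}$ square roots of $K(D)$; your claim that every minimum has $\beta=0$ and that the minima are ``parameterised by polystable $V$-line bundles indexed by the $2$-torsion of $\mathrm{Pic}_V(M)$'' conflates these two families, and in particular the factor $2^{s_0}$ comes from $H^2_V(M,\mathbb{Z}_2)$, not from torsion in the Picard group. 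Relatedly, your explanation of why the extra term $(2g-2+s)\prod_i m_i$ of Theorem \ref{903} disappears is the wrong mechanism: in the $\mathrm{Sp}(4,\mathbb{R})$ case the configurations $W=L\oplus L^{\vee}$ with $\mathrm{par}\deg L<2g-2+s$ are genuine local minima giving genuine components, and for $n\ge 3$ they are simply absent from the classification of minima (Proposition \ref{407}); they are not non-minimal critical subvarieties eliminated by a positive-index/gradient-flow argument.

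Second, the computation that makes everything depend only on $s_0$ is not carried out. The paper's proof rests on computing, via Mayer--Vietoris and the Leray spectral sequence for $D\times_{\mathbb{Z}_{m_i}}E\mathbb{Z}_{m_i}$, that $H^1(\,\cdot\,,\mathbb{Z}_2)$ and $H^2(\,\cdot\,,\mathbb{Z}_2)$ vanish when $m_i$ is odd and equal $\mathbb{Z}_2$ when $m_i$ is even, whence $\mathrm{rk}\,H^1_V(M,\mathbb{Z}_2)=2g+s_0-1$ and $\mathrm{rk}\,H^2_V(M,\mathbb{Z}_2)=s_0$; this is what you would need in place of your Picard-group count. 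Your $2$-torsion count is itself off: because of the degree ``carrying'' in the extension $0\to\mathrm{Pic}(X)\to\mathrm{Pic}_V(M)\to\bigoplus_i\mathbb{Z}_{m_i}\to 0$, the square roots of $K(D)$ number $2^{2g+s_0-1}$ rather than $2^{2g+s_0}$, so your final formula is not derived from your computation but quoted (``the same bookkeeping as in Theorem \ref{510}''). Finally, the connectedness of each subvariety with fixed invariants still requires the (adapted) arguments of Propositions \ref{501} and \ref{505}, which you defer; and the Morse-index/gradient-flow verification you single out as the main obstacle is not how the argument runs --- one only needs that a subspace is connected whenever its locus of local minima is, by properness of $f$.
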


Let $\alpha_{ij}=\frac{k_{ij}}{m_i}$ be the weights over the puncture $x_{i}\in D$ with the same denominator $m_i$, where $k_{ij}$ is a positive integer, for $1 \leq i \leq s$ and $1 \leq j \leq n$. We call the parabolic structure $\alpha$ non-reduced, if there is at least one enumerator $k_{ij}$ such that $k_{ij}$ and $m_i$ are coprime, for any $1 \leq i \leq s$. Using the same arguments as in the preceding two theorems, we finally obtain:

\begin{cor*}{\bf{\ref{905}}}
Let $X$ be a compact Riemann surface of genus $g$ with a divisor $D$ of $s$-many distinct points on $X$, such that $2g-2+s>0$. Then, the moduli space of polystable maximal parabolic $\text{Sp}(4,\mathbb{R})$-Higgs bundles on $(X,D)$ with any non-reduced parabolic structure $\alpha$ has $$(2^{s_0}+1)2^{2g+s_0-1}-2^{s_0}+(2g-2+s)$$ connected components, where $s_0$ is the number of the even $m_{i}$ in the collection $(m_i)_{1\le i\le s}$.
\end{cor*}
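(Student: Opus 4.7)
The plan is to adapt the Morse-theoretic component count established in Theorem \ref{903} to the situation in which a single non-reduced parabolic structure $\alpha$ is fixed, rather than allowing $\alpha$ to vary over all parabolic structures of a given weight type $(m_i)$. The overall strategy relies on studying the critical subvarieties of the positive proper function
\[
f\bigl([(E,\Phi)]\bigr) = \tfrac{1}{2}\|\Phi\|^{2}
\]
on the moduli space of polystable parabolic $\text{Sp}(4,\mathbb{R})$-Higgs bundles with this fixed $\alpha$; as in the preceding theorems, connected components of this moduli space correspond bijectively to connected components of the locus of local minima of $f$, which in turn are identified with polystable variations of Hodge structure.

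First I would decompose the minima of $f$ into the two classes that emerge in the proof of Theorem \ref{903}: (a) Hodge-type minima, whose underlying data are governed by $V$-spin structures and parity invariants at the punctures of the associated $V$-surface $M$; and (b) a ``stable range'' contribution particular to the rank-two case, enumerated by auxiliary rank-one parabolic objects on $(X,D)$. For class (a), invoking Corollary \ref{315} together with Proposition \ref{3170}, these minima are classified by $V$-Picard data on a spectral cover of $M$, and the count only detects the parity of the cyclic stabilizers, controlled by $s_0$; consequently it equals $(2^{s_0}+1)2^{2g+s_0-1}-2^{s_0}$, independently of the specific choice of $\alpha$ within the given weight type. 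For class (b), the factor $\prod_{i=1}^{s}m_i$ that appears in Theorem \ref{903} arises precisely by summing over the possible numerator tuples $(k_{ij})$ compatible with denominators $(m_i)$; once $\alpha$ itself is fixed, there is a single admissible numerator configuration, so the class (b) contribution collapses to $(2g-2+s)$.

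The main obstacle will be to verify that the non-reduced hypothesis genuinely realizes the denominators $(m_i)$ at every puncture, ensuring both the $s_0$-dependent Hodge-type count and the enumeration of class (b) minima match the generic contributions from Theorem \ref{903}. Indeed, if some weight $\alpha_{ij}$ reduced to a strictly smaller denominator $m_i'\mid m_i$, the associated $V$-surface at $x_i$ would carry a strictly smaller cyclic stabilizer, potentially altering both $s_0$ and the enumeration of minima via a different spectral cover $M_\eta\to M$. The hypothesis that for every $i$ there exists some $k_{ij}$ coprime to $m_i$ precisely prevents this collapse and ensures that the moduli space is built over exactly the $V$-surface determined by the weight type $(m_i)$, so that the analysis of Theorem \ref{903} transfers without modification to the fixed $\alpha$ setting. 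Summing the contributions from classes (a) and (b) then yields the claimed total $(2^{s_0}+1)2^{2g+s_0-1}-2^{s_0}+(2g-2+s)$.
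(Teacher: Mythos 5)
Your proposal follows essentially the same route as the paper: the topological invariants governing the minima with $\beta=0$ and the square-root case contribute $(2^{s_0}+1)2^{2g+s_0-1}-2^{s_0}$ components exactly as in Theorem \ref{903}, while fixing $\alpha$ collapses the stratum $W=L\oplus L^{\vee}$ with $par\deg(L)<2g-2+s$ from $(2g-2+s)\prod_{i=1}^{s}m_i$ to $2g-2+s$ components (as in \S 8), with the non-reduced hypothesis guaranteeing that the cyclic stabilizers of the associated $V$-surface, and hence $s_0$, are genuinely those of $(m_i)_{1\le i\le s}$. The only slight misattribution is that the count and connectedness of the first class rest on the $\mathbb{Z}_2$-ranks of $H^1_V(M)$, $H^2_V(M)$ and the Prym variety of the covering $M_{X_u}\to M$ determined by $u$ (Propositions \ref{501}, \ref{503}, \ref{505}), rather than on the Hitchin spectral cover of Corollary \ref{315} and Proposition \ref{3170}, but this does not affect the argument.
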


\section{Parabolic $\text{Sp}\left( 2n,\mathbb{R} \right)$-Higgs bundles and their moduli}
In this section, we review basic facts about moduli spaces of parabolic Higgs bundles and set notation.

\subsection{Parabolic vector bundles}

Let $X$ be a closed, connected, smooth Riemann surface of non-negative genus  $g$ together with a finite collection of distinct points ${{x}_{1}},\ldots ,{{x}_{s}}$. Denote by $D$ the effective divisor
\[D={{x}_{1}}+\ldots +{{x}_{s}}\]
with ${{x}_{i}}\ne {{x}_{j}}$, for $i\ne j$. The pair $\left( X,D \right)$ will be kept fixed throughout.

\begin{defn}\label{204}
A \emph{parabolic vector bundle} $E$ of rank $n$ over $\left( X,D \right)$ is a holomorphic vector bundle together with a \emph{parabolic structure} along $D$, that is, a collection of weighted flags on each fiber ${{E}_{x}}$ for $x\in D$:
\[\begin{matrix}
   {{E}_{x}}={{E}_{x,1}}\supset {{E}_{x,2}}\supset \ldots \supset {{E}_{x,r\left( x \right)+1}}=\left\{ 0 \right\}  \\
   0\le {{\alpha }_{1}}\left( x \right)<\ldots <{{\alpha }_{r\left( x \right)}}\left( x \right)<1
\end{matrix}\]
where $r\left( x \right)$ is an integer between $1$ and $n$.\end{defn}

The real numbers ${{\alpha }_{i}}\left( x \right)\in \left[ 0,1 \right)$ for $1\le i\le r\left( x \right)$ are called the \emph{weights} of the subspaces ${{E}_{x}}$ and we usually write $\left( E, \alpha \right)$ to denote a parabolic vector bundle equipped with a parabolic structure determined by a system of weights $\alpha \left( x \right)=\left( {{\alpha }_{1}}\left( x \right),\ldots ,{{\alpha }_{r(x)}}\left( x \right) \right)$ at each $x\in D$; whenever the system of weights is not discussed in the context, we will be omitting the notation  $\alpha$ to ease exposition. Moreover, let ${{k}_{i}}\left( x \right)=\dim\left( {{{E}_{x,i}}}/{{{E}_{x,i+1}}}\; \right)$ denote the \emph{multiplicity} of the weight ${{\alpha }_{i}}\left( x \right)$ and notice that $\sum\limits_{i}{{{k}_{i}}}\left( x \right)=n$. A weighted flag shall be called \emph{full}, if ${{k}_{i}}\left( x \right)=1$ for every $1\le i\le r\left( x \right)$  and every $x\in D$.

We define \emph{parabolic degree} and \emph{parabolic slope} of a parabolic vector bundle over $\left( X,D \right)$ as follows
\[par\deg \left( E \right)=\deg E+\sum\limits_{x\in D}{\sum\limits_{i=1}^{r\left( x \right)}{{{k}_{i}}\left( x \right){{\alpha }_{i}}\left( x \right)}}\]
\[par\mu \left( \text{E} \right)=\frac{\text{pardeg}\left( E \right)}{\text{rk}\left( E \right)}.\]

In the sequel, we will be making frequent use of the notion of subbundle, dual, direct sum and tensor product of parabolic vector bundles. We review these constructions below:

\paragraph{\emph{Parabolic subbundle}.} Let $\left( E,\alpha  \right)$ be a parabolic vector bundle over $\left( X,D \right)$. A \emph{parabolic subbundle} $\left( V,\beta  \right)$ of $\left( E,\alpha  \right)$ is described by a holomorphic vector subbundle $V\subseteq E$ together with a parabolic structure along the divisor $D$ given by the flag
	\[{{V}_{x}}={{V}_{x,1}}\supset {{V}_{x,2}}\supset \ldots {{V}_{x,r\left( x \right)+1}}=\left\{ 0 \right\}\]
for ${{V}_{x,i}}={{V}_{x}}\cap {{E}_{x,i}}$, and a system of weights  $\beta \left( x \right)=\left( {{\beta }_{1}}\left( x \right),\ldots {{\beta }_{r\left( x \right)}}\left( x \right) \right)$ defined by
\begin{align*}
 {{\beta }_{i}}\left( x \right) & ={{\max }_{j}}\left\{ {{\alpha }_{j}}\left( x \right)\left| {{V}_{x}}\cap {{E}_{x,j}}={{V}_{x,i}} \right. \right\}\\
& = {{\max }_{j}}\left\{ {{\alpha }_{j}}\left( x \right)\left| {{V}_{x,i}}\subseteq {{E}_{x,j}} \right. \right\}.
\end{align*}
In other words, the weight for ${{V}_{x,i}}$ is the weight ${{\alpha }_{j}}\left( x \right)$ for which ${{V}_{x,i}}\subseteq {{E}_{x,j}}$ but ${{V}_{x,i}}\not\subseteq{E}_{x,j+1}$.

\paragraph{\emph{Parabolic dual}.} Let $\left( E,\alpha  \right)$ be a parabolic vector bundle over $(X,D)$. Consider the following filtration for each point $x\in D$:
\[ E^\vee_x= E^\vee_{x,1} \supset \dots \supset E^{\vee}_{x,r(x)} \supset 0\]
\[ 1-\alpha_ {r\left( x \right)} (x)< \dots < 1-\alpha_1(x)\]
where $E^\vee_{x,i} := \text{Hom}(E_x/E_{x,r(x)+2-i}, \mathcal{O}(-D)_x)$. It is easy to check that
\[ (E^{\vee})^{\vee}=E \quad \text{and} \quad par\deg(E^\vee)=-par\deg(E).\]
We call $E^\vee$ the \emph{parabolic dual} of $E$.

\paragraph{\emph{Parabolic direct sum}.} Let $\left( E,\alpha  \right)$ and $\left( {E}',{\alpha }' \right)$ be two parabolic vector bundles over the pair $\left( X,D \right)$. Their \emph{parabolic direct sum} $\left( \tilde{E},\tilde{\alpha } \right)$ is defined as the holomorphic vector bundle $\tilde{E}:=E\oplus {E}'$ together with a system of weights $\tilde{\alpha }$ comprised of the ordered collection of the weights in $\alpha $ and ${\alpha }'$ corresponding to the filtration of ${{\tilde{E}}_{x,k}}={{E}_{x,i}}\oplus {{{E}'}_{x,j}}$, where  $i$ (resp.$j$) is the smallest integer such that ${{\tilde{\alpha }}_{k}}\left( x \right)\le {{\alpha }_{i}}\left( x \right)$ (resp. ${{\tilde{\alpha }}_{k}}\left( x \right)\le {{{\alpha }'}_{j}}\left( x \right)$). Under this definition the parabolic degree now satisfies the relation
\[ par\deg(E\oplus {E}')=par\deg(E) + par\deg(E') .\]

\paragraph{\emph{Parabolic tensor product}.}
 Let now $\left( E,\alpha  \right)$ be a parabolic vector vector bundle and $(L,\beta)$ be a parabolic line bundle over $\left( X,D \right)$. Notice that $L$ is a locally free sheaf of rank $1$, thus the filtration of $L$ at a point $x \in D$ is always given by the trivial flag $L_x \supset 0$ with weight $\beta(x)$ for this filtration. The \emph{parabolic tensor product} $E \otimes L$ is defined as the kernel of the following map
\begin{align*}
E \otimes L(D) \twoheadrightarrow \bigoplus_{x \in D} \left( (E_x/E_{x,i_x}) \otimes L(D)_p \right),
\end{align*}
where $i_x= \text{min} \{ r(x)+1, i | \alpha_i(x)+\beta(x) \geq 1 \}$, for $x \in D$. The filtration is given by
\begin{align*}
E_{x,i_x} \otimes L(D)_x \supset \dots \supset E_{x,r(x)} \otimes L(D)_x \supset E_{x,1} \otimes L_x \supset \dots \supset E_{x,i_x-1} \otimes L_x
\end{align*}
with weights
\begin{align*}
\alpha_{i_x}(x)+\beta(x)-1 < \dots < \alpha_{r(x)}(x)+\beta(x)-1 < \alpha_1(x)+\beta(x)< \dots < \alpha_{i(x)-1}(x)+\beta(x).
\end{align*}
This construction can be extended to vector bundles or locally free sheaves of any rank; details can be found in \cite{Yoko2}. The parabolic tensor product of two parabolic vector bundles $E$ and $F$ now satisfies
\[ par\deg(E \otimes F)=\text{rk}(F) par\deg(E) + \text{rk}(E) par\deg(F) .\]

\begin{defn}\label{205}
For a pair of parabolic vector bundles $\left( E, \alpha \right),\left( {E}', \alpha^{\prime } \right)$ over the pair $\left( X,D \right)$  a holomorphic map $f:E\to {E}'$ is called \emph{parabolic} if ${{\alpha }_{i}}\left( x \right)>{{{\alpha }'}_{j}}\left( x \right)$ implies $f\left( {{E}_{x,i}} \right)\subset {{{E}'}_{x,j+1}}$ for every $x\in D$. Furthermore, we call such map \emph{strongly parabolic} if ${{\alpha }_{i}}\left( x \right)\ge {{{\alpha }'}_{j}}\left( x \right)$ implies $f\left( {{E}_{x,i}} \right)\subset {{{E}'}_{x,j+1}}$ for every $x\in D$.
\end{defn}

\begin{defn}\label{207}
A parabolic vector bundle $\left( E,\alpha  \right)$ over $(X,D)$ will be called \emph{stable} (resp. \emph{semistable}), if for every non-trivial proper parabolic subbundle $V\le E$ it is $\text{par}\mu \left( V \right)<\text{par}\mu \left( E \right)$ (resp. $\le $). A parabolic vector bundle is called  \emph{poly-stable}, if it is a direct sum of stable parabolic vector bundles of the same parabolic slope.
\end{defn}

\subsection{Moduli of parabolic Higgs bundles}

\begin{defn}
Let $\left( X,D \right)$ be a pair of a Riemann surface together with a divisor. Denote by   $K=\Omega _{X}^{1}$ the canonical line bundle over $X$  and consider $K\left( D \right):=K\otimes {{\mathsf{\mathcal{O}}}_{X}}\left( D \right)$. A \emph{(strongly) parabolic Higgs bundle} $\left( E,\Phi  \right)$ is a pair of a parabolic vector bundle $E$ and a (strongly) parabolic morphism $\Phi :E\to E\otimes K\left( D \right)$ called a \emph{Higgs field}.
\end{defn}

\begin{rem}
For a strongly parabolic Higgs bundle $\left( E,\Phi  \right)$ the Higgs field $\Phi $ is a meromorphic endomorphism valued 1-form with at most simple poles along the divisor $D$ and with  $\text{Re}{{\text{s}}_{x}}\Phi $ nilpotent for each $x\in D$. In other words, it is satisfied that $\Phi \left( {{E}_{x,i}} \right)\subseteq {{E}_{x,i+1}}\otimes K{{\left( D \right)}_{x}}$ for every $x\in D$.
\end{rem}

\begin{defn}\label{250}
A (strongly) parabolic Higgs bundle $\left( E,\Phi  \right)$ will be called \emph{stable} (resp. \emph{semistable}), if for every non-trivial $\Phi $-invariant parabolic subbundle $V\subseteq E$, it holds that  $par\mu \left( V \right)<par\mu \left( V \right)$ (resp.$\le $); by $\Phi $-invariant it is meant here that $\Phi \left( V \right)\subseteq V\otimes K\left( D \right)$.
\end{defn}

For a given rank $n$ and a given parabolic structure $\alpha$ on the underlying parabolic vector bundle $E$, a moduli space of semistable parabolic Higgs bundles $\left( E,\Phi  \right)$ over $\left( X,D \right)$ was constructed by K. Yokogawa as a complex quasi-projective variety, which is smooth at the stable points (see \cite{Yoko1}, \cite{Yoko2}). This moduli space parameterizes isomorphism classes of semistable parabolic Higgs bundles, where the isomorphism classes are also called the $S$-equivalent classes and are defined from the Jordan-H\"older filtration (see \cite[\S 4]{Yoko1}). We prefer to consider the poly-stable objects in this moduli space. Let ${{\mathsf{\mathcal{M}}}_{par}}\left( X,D,\alpha,n  \right)$ denote the moduli space of rank $n$ poly-stable parabolic Higgs bundles over $(X,D)$ with parabolic structure $\alpha$.

In this article, we wish to restrict to smaller moduli spaces than ${{\mathsf{\mathcal{M}}}_{par}}\left( X,D,\alpha,n  \right)$. For a positive integer $m \ge 2$, denote by ${{\mathsf{\mathcal{M}}}_{par}}\left( X,D,\alpha,n ,m \right)$ the moduli space of parabolic Higgs bundles $\left( E,\Phi  \right)$ such that the weights in the system $\alpha $ can be written as rational numbers with denominator $m$. Clearly, $${{\mathsf{\mathcal{M}}}_{par}}\left( X,D,\alpha,n ,m \right)\subset {{\mathsf{\mathcal{M}}}_{par}}\left( X,D,\alpha,n  \right).$$

Similarly, denote by ${{\mathsf{\mathcal{M}}}_{par}}\left( X,D,n \right)$ the moduli space of rank $n$ poly-stable parabolic Higgs bundles $\left( E,\Phi  \right)$ with arbitrary parabolic structure, and ${{\mathsf{\mathcal{M}}}_{par}}\left( X,D,n,m \right)$ the moduli space of rank $n$ poly-stable parabolic Higgs bundles $\left( E,\Phi  \right)$ equipped with a parabolic structure in which the weights can be written as rational numbers with denominator $m$.

\subsection{Parabolic $\text{Sp}\left( 2n,\mathbb{R} \right)$-Higgs bundles}

In the sequel, we shall be considering parabolic Higgs bundles for structure group $G=\text{Sp}\left( 2n,\mathbb{R} \right)$. A general theory of parabolic $G$-Higgs bundles for a real reductive group $G$ was carried out in  \cite{BiGaRi}, where a Hitchin-Kobayashi correspondence was also established. For our purposes, we will consider here parabolic $\text{Sp}\left( 2n,\mathbb{R} \right)$-Higgs bundles as special parabolic Higgs bundles  $\left( E,\Phi  \right)$; one can check though that the general definition from  \cite{BiGaRi} specializes when $G=\text{Sp}\left( 2n,\mathbb{R} \right)$ to the definition we describe next.

Let $\left( X,D \right)$ and $K\left( D \right)$ as defined in \S 2.1 and \S 2.2. A maximal compact subgroup of $G=\text{Sp}(2n,\mathbb{R})$ is $H \cong \text{U}(n)$ and ${{H}^{\mathbb{C}}}=\text{GL}( n,\mathbb{C})$, thus the parabolic structure is defined on a $\text{GL}\left( n,\mathbb{C} \right)$-principal bundle.

\begin{defn}\label{401}
A \emph{parabolic $\text{Sp}( 2n,\mathbb{R} )$-Higgs bundle} over $\left( X,D \right)$ is defined as a triple $\left( V,\beta ,\gamma  \right)$, where
\begin{itemize}
\item $V$ is a holomorphic rank $n$ parabolic bundle on $X$ such that all of the weights have denominator $2$.
\item The maps  $\beta :{{V}^{\vee }}\to V\otimes K(D)$ and $\gamma :V\to {{V}^{\vee }}\otimes K(D)$ are symmetric parabolic morphisms.
\end{itemize}
\end{defn}

The parabolic structures on $V$ and ${{V}^{\vee }}$ now induce a parabolic structure on the parabolic direct sum $E=V\oplus {{V}^{\vee }}$, for which $par\deg E=0$. The definition of a parabolic $\text{Sp}( 2n,\mathbb{R})$-Higgs bundle on $\left( X,D \right)$ is now reinterpretated as a pair $\left( E,\Phi  \right)$ such that
\begin{align*}
E=V\oplus {{V}^{\vee }}, \quad \Phi =\left( \begin{matrix}
   0 & \beta   \\
   \gamma  & 0  \\
\end{matrix} \right):E\to E\otimes K\left( D \right).
\end{align*}
The stability condition for such pairs $\left( E,\Phi  \right)$ is given in Definition \ref{250}. In the rest of the paper, we shall denote by $\mathcal{M}_{par}(\text{Sp}(2n,\mathbb{R}))$ the moduli space of the triples $(V,\beta,\gamma)$ defined above. Note that in Definition \ref{401} the weights of the parabolic structure for $V$ have denominator $2$. Therefore, for us
\begin{align*}
\mathcal{M}_{par}(\text{Sp}(2n,\mathbb{R})) = \mathcal{M}_{par}(X,D,n,2),
\end{align*}
in the notation of \S 2.2.

Now we consider a special parabolic structure. Let $V$ be a holomorphic rank $m$ vector bundle defined over the compact Riemann surface $X$. We fix a parabolic structure on $V$ as follows. The parabolic structure is defined by a trivial flag ${{V}_{x}}\supset \left\{ 0 \right\}$ and weight $\frac{1}{2}$ for each ${{V}_{x}}$ and $x\in D$. Then the parabolic symmetric power $\text{Sym}^n(V)$ is equipped with the trivial flag and weight $\frac{1}{2}$. Denote by $V^{\vee}$ the parabolic dual of $V$, which is defined as ${{\left( {{V}^{*}} \right)}}\otimes {\mathcal{O}_X(D)^{*}}$. The parabolic symmetric power for the parabolic dual $\text{Sym}^n(V^\vee)$ is defined as the symmetric product of the bundle ${{\left( {{V}^{*}} \right)}}\otimes {\mathcal{O}_X(D)^{*}} $ equipped with a parabolic structure given by the trivial flag and weight $\frac{1}{2}$. Denote this parabolic structure by $\hat{\alpha}$, and define
\begin{align*}
\mathcal{M}_{par}(\text{Sp}(2n,\mathbb{R}),\hat{\alpha}) := \mathcal{M}_{par}(X,D,\hat{\alpha},n,2).
\end{align*}
We will compute the number of connected components of this moduli space (with maximal parabolic degree) in \S 8.

\begin{rem}\label{402}
In the definition of a parabolic $\text{Sp}\left( 2n,\mathbb{R} \right)$-Higgs bundle we could have considered any parabolic flag on the fiber  ${{V}_{x_{i}}}$:
\begin{equation}\tag{2}
\begin{matrix}
   {{V}_{x_{i}}}\supset {{V}_{x_{i},2}}\supset \ldots \supset {{V}_{x_{i},n+1}}=\left\{ 0 \right\}  \\
   0\le {{\alpha }_{1}}\left( x_{i} \right)\le \ldots \le  {{\alpha }_{n}}\left( {{x}_{i}} \right)<1  \\
\end{matrix}
\end{equation}
for each ${{x}_{i}}\in D$. The reason for fixing the trivial flag with weight $\frac{1}{2}$ will become clear later on in \S 4, where we use the data  $\left( X,D,m=2 \right)$  in order to construct the corresponding $V$-surface $M$ and use the topological invariants of Higgs $V$-bundles to study the topology of the moduli space of parabolic Higgs bundles. These invariants will be defined as characteristic classes in $V$-cohomology groups with ${{\mathbb{Z}}_{2}}$-coefficients.
\end{rem}

\section{Bott-Morse Theory on $\mathcal{M}_{par}(\text{Sp}\left( 2n,\mathbb{R} \right))$}

Similarly to the non-parabolic case, there is a natural $\mathbb{C}^*$-action on the moduli space $\mathcal{M}_{par}(\text{Sp}\left( 2n,\mathbb{R} \right))$ given by the map $\lambda \cdot (E,\Phi)=(E,\lambda \Phi)$. For finding solutions to the Hitchin equations, we restrict the action to $S^1 \subset \mathbb{C}^*$. With respect to the complex structure studied by H. Konno \cite{Konn}, this is a Hamiltonian action and the associated moment map is
\begin{equation*}
[(E, \Phi)] \rightarrow -\frac{1}{2} \parallel \Phi \parallel^2=-i\int_X \text{Tr}(\Phi \Phi^*).
\end{equation*}
We choose instead to consider the positive function
\begin{equation*}
f([(E, \Phi)])= \frac{1}{2} \parallel \Phi \parallel^2.
\end{equation*}
The map $f$ is non-negative and proper \cite{Bis}, as follows from the properness of the moment map associated to the circle action on $\mathcal{M}_{par}(\text{Sp}\left( 2n,\mathbb{R} \right))$. Thus, a subspace $\mathcal{N}$ of $\mathcal{M}_{par}(\text{Sp}\left( 2n,\mathbb{R} \right))$ is connected if the subspace of local minima of $f$ in $\mathcal{N}$ is connected. To find the local minima of $f$, we must first determine the critical points of $f$. The following theorem indicates that critical points of $f$ are exactly the fixed points under the circle action $S^1$ on $\mathcal{M}_{par}(\text{Sp}\left( 2n,\mathbb{R} \right))$.

\begin{prop}[Proposition 3.3 in \cite{GaGoMu}]\label{209}
The proper map $f$ is a perfect Bott-Morse function. The critical points of $f$ are exactly the fixed points of the circle action. Moreover, the eigenvalue $l$ subspace for the Hessian of $f$ is the same as the weight $-l$ subspace for the infinitesimal circle action on the tangent space. In particular, the Morse index of $f$ at a critical point equals the real dimension of the positive weight space of the circle action on the tangent space.
\end{prop}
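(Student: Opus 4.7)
The plan is to follow the classical template due to Frankel, transplanted into the parabolic Higgs setting and already carried out in the smooth (non-parabolic) case in \cite{GaGoMu}. The essential input is that $f=\tfrac12\|\Phi\|^{2}$ is, up to a sign, the moment map for a Hamiltonian circle action on the Kähler manifold of stable points of $\mathcal{M}_{par}(\mathrm{Sp}(2n,\mathbb{R}))$. Konno's work, cited in the excerpt, supplies the Kähler structure and the Hamiltonian nature of the $S^{1}$-action; properness of $f$ was noted above using Biswas' result. With these two ingredients in place the rest of the statement is formal, and my plan is to spell out each clause separately.

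First I would verify that the critical locus of $f$ coincides with $\mathrm{Fix}(S^{1})$. Writing $\omega$ for the Kähler form and $X$ for the fundamental vector field of the circle action, the moment-map identity $df=-\iota_{X}\omega$ together with non-degeneracy of $\omega$ gives $df_{p}=0$ if and only if $X_{p}=0$, which is exactly the condition that $p$ be fixed by $S^{1}$. Next I would analyse the Hessian at such a fixed point $p$. Because the $S^{1}$-action is holomorphic and isometric, the tangent space $T_{p}\mathcal{M}_{par}(\mathrm{Sp}(2n,\mathbb{R}))$ decomposes into $S^{1}$-weight spaces $T_{p}=\bigoplus_{l\in\mathbb{Z}}T_{p}^{(l)}$ that are mutually orthogonal with respect to the Kähler metric and are preserved by the complex structure $I$. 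On the weight $-l$ subspace the infinitesimal action is multiplication by $-il$, and differentiating $df=-\iota_{X}\omega$ once more shows that the Hessian of $f$ on $T_{p}^{(-l)}$ is $l$ times the metric tensor. This gives precisely the identification of the eigenvalue $l$ subspace of $\mathrm{Hess}(f)$ with the weight $-l$ subspace of the infinitesimal circle action, and in particular identifies the Morse index at $p$ with the real dimension of $\bigoplus_{l>0}T_{p}^{(-l)}$, namely the positive weight space.

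Finally, to obtain perfectness of the Bott-Morse function I would invoke the standard Frankel/Kirwan argument: on a Kähler manifold the moment map for a Hamiltonian circle action is a non-degenerate (Bott-)Morse function whose unstable manifolds are complex submanifolds, so the negative Morse-Bott cells have even real codimension and the Morse inequalities are equalities. The statement is usually given for compact targets, but in our setting properness of $f$ plus the finite-dimensionality of each critical stratum is enough to run the same inductive argument over the sublevel sets $f^{-1}([0,c])$, exactly as in \cite[Proposition 3.3]{GaGoMu}.

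The bulk of the work is thus bookkeeping rather than new ideas; the step I expect to require most care is the first one, because the moduli space is only smooth at the stable points and the Kähler identities must be applied on this smooth locus. Once one restricts to stable orbits, where Konno's Kähler structure is genuinely smooth and the $S^{1}$-action is Hamiltonian, the Frankel argument proceeds without modification and the statement of the proposition follows verbatim from the non-parabolic case.
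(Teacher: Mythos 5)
Your proposal is correct and follows the standard Frankel/Kirwan moment-map argument, which is exactly how this result is established in the cited source; the paper itself gives no independent proof, quoting it directly as Proposition 3.3 of \cite{GaGoMu}. Your attention to the two genuine subtleties (smoothness only on the stable locus, and properness replacing compactness in the perfectness argument) matches the treatment in the literature.
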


\subsection{The deformation complex}
In \cite{KySZ}, \S 3, we studied the deformation theory for parabolic $G$-Higgs bundles, when $G$ is a semisimple reductive Lie group. For $H\subset G$ a maximal compact subgroup and $\mathfrak{g}=\mathfrak{h}\oplus \mathfrak{m}$ a Cartan decomposition of the Lie algebra, the deformation complex of a parabolic $G$-Higgs bundle $\left( E,\Phi  \right)$ over $\left( X,D \right)$ is defined as the complex of sheaves
\[{{C}^{\bullet }}\left( E,\Phi  \right):E\left( {{\mathfrak{h}}^{\mathbb{C}}} \right)\to E\left( {{\mathfrak{m}}^{\mathbb{C}}} \right)\otimes K\left( D \right).\]
Moreover, the space of infinitesimal deformations of a $G$-Higgs bundle $\left( E,\Phi \right)$ is naturally isomorphic to the hypercohomology group ${{\mathbb{H}}^{1}}\left( {{C}^{\bullet }}\left( E,\Phi  \right) \right)$. Now, an element $(E, \Phi)$ is a fixed point if and only if there is an infinitesimal gauge transformation $\psi \in H^{0}(X, E(\mathfrak{g}^{\mathbb{C}}) \otimes K(D))$ such that
\begin{align*}
 d_E \psi =0, \quad [\psi, \Phi]=i \Phi.
\end{align*}
Then, $(E,\Phi)$ can be decomposed as the direct sum of eigenspaces of the gauge transformation $\psi$. We have
\begin{align*}
E(\mathfrak{g}^{\mathbb{C}})=\oplus U_m,
\end{align*}
where $\psi|_{U_m}=im$. By the relation $[\psi, \Phi]=i \Phi$, we have $\Phi:U_m \rightarrow U_{m+1} \otimes K(D)$, thus
$\Phi \in H^{0}(X,U_1 \otimes K(D))$. However, since $ E(\mathfrak{g}^{\mathbb{C}}) \cong E(\mathfrak{h}^{\mathbb{C}})\oplus E(\mathfrak{m}^{\mathbb{C}})$, and $\Phi \in H^0(X,E(\mathfrak{m}^{\mathbb{C}}) \otimes K(D))$, we conclude that $\text{ad}\Phi$ interchanges $E(\mathfrak{h}^{\mathbb{C}})$ and $E(\mathfrak{m}^{\mathbb{C}})$, whereas
\begin{align*}
E(\mathfrak{h}^{\mathbb{C}}) = \oplus U_{2m} ,\quad E(\mathfrak{m}^{\mathbb{C}}) = \oplus U_{2m+1}.
\end{align*}

The following observation is made by N. Hitchin \cite{Hit1} in the non-parabolic case, while the parabolic version can be found in \cite{GaGoMu}. If $\psi$ acts with weight $m$ on an element in $E(\mathfrak{g}^{\mathbb{C}})=\oplus U_{m}$, then the corresponding eigenvalue of the Hessian of $f$ is $-m$, while a weight $m$ on $E(\mathfrak{g}^{\mathbb{C}}) \otimes K(D)$ gives the eigenvalue $1-m$. Let $C^{\bullet }\left( E,\Phi  \right)_{-}$ be the restriction of the complex $C^{\bullet }\left( E,\Phi  \right)$ to the positive weight part. The Hessian of $f$ is negative definite on $\mathbb{H}^1(C^{\bullet }\left( E,\Phi  \right)_-)$. The positive part of the complex can be written as
\begin{align*}
C^{\bullet }\left( E,\Phi  \right)_- : \bigoplus_{m \geq 1} U_{2m} \xrightarrow{\text{ad}\Phi} \bigoplus_{m \geq 1} U_{2m+1} \otimes K(D).
\end{align*}

The next lemma will be very important in describing the connected components of $\mathcal{M}_{par}(\text{Sp}\left( 2n,\mathbb{R} \right))$. It is implied from \S 3 in \cite{GaGoMu}, similarly to the classical case of non-parabolic Higgs bundles \cite{Goth}.
\begin{lem}\label{216}
The poly-stable Higgs bundle $(E,\Phi)$ represents a local minimum of $f$ if and only if $\dim \mathbb{H}^1(C^{\bullet }\left( E,\Phi  \right)_-)=0$.
\end{lem}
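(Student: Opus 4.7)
The plan is to combine Proposition \ref{209} with the identification of the tangent space at a (poly-stable) point of the moduli space with the first hypercohomology of the deformation complex. A local minimum of $f$ is automatically a critical point, hence in both directions we may assume that $(E,\Phi)$ is a fixed point of the circle action, so that the weight decomposition of $E(\mathfrak{g}^{\mathbb{C}})$ from \S 3.1 is available and the subcomplex $C^\bullet(E,\Phi)_-$ is well defined.

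At such a fixed point, the infinitesimal gauge transformation $\psi$ produces the decomposition $E(\mathfrak{h}^{\mathbb{C}}) = \bigoplus U_{2m}$ and $E(\mathfrak{m}^{\mathbb{C}}) = \bigoplus U_{2m+1}$. Using the weight conventions recorded in the preamble to the lemma, where $U_m$ carries circle weight $m$ and $U_m \otimes K(D)$ carries circle weight $m-1$, the differential $\text{ad}\,\Phi$ becomes circle-equivariant; the full deformation complex splits $S^1$-equivariantly as
\begin{equation*}
C^\bullet(E,\Phi) \;=\; \bigoplus_{m \in \mathbb{Z}} C^\bullet_{(m)}, \qquad C^\bullet_{(m)} \colon U_{2m} \xrightarrow{\text{ad}\,\Phi} U_{2m+1} \otimes K(D),
\end{equation*}
with $C^\bullet(E,\Phi)_-$ being the sum over $m \geq 1$. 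Taking first hypercohomology, which commutes with the direct sums, yields the corresponding weight decomposition of the tangent space
\begin{equation*}
T_{[(E,\Phi)]} \mathcal{M}_{par}(\text{Sp}(2n,\mathbb{R})) \;=\; \mathbb{H}^1(C^\bullet(E,\Phi)) \;=\; \bigoplus_{m \in \mathbb{Z}} \mathbb{H}^1(C^\bullet_{(m)}),
\end{equation*}
in which the strictly positive circle-weight subspace is exactly $\mathbb{H}^1(C^\bullet(E,\Phi)_-)$.

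By Proposition \ref{209}, the Morse index of $f$ at the critical point $[(E,\Phi)]$ equals the real dimension of this positive-weight subspace, that is $\dim_{\mathbb{R}} \mathbb{H}^1(C^\bullet(E,\Phi)_-)$. Since $f$ is a perfect Bott-Morse function, a critical point is a local minimum precisely when its Morse index vanishes, so $[(E,\Phi)]$ is a local minimum if and only if $\mathbb{H}^1(C^\bullet(E,\Phi)_-) = 0$, which is the claim.

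The main obstacle is verifying the $S^1$-equivariance of the hypercohomology identification, so that the algebraic weight splitting of the sheaf-level complex coincides with the analytic eigenspace decomposition for the circle action on the tangent space. Once this compatibility is secured, which is the content of the parabolic adaptation of \cite{GaGoMu} referenced in the excerpt, the lemma follows immediately from the index formula of Proposition \ref{209}.
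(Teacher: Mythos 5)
Your argument is correct and is essentially the same as the one the paper relies on: the paper gives no independent proof of Lemma \ref{216}, deferring to \S 3 of \cite{GaGoMu} and \cite{Goth}, and that standard argument is exactly your combination of Proposition \ref{209} with the circle-weight splitting of the deformation complex, identifying the Morse index with $\dim_{\mathbb{R}}\mathbb{H}^1\bigl(C^{\bullet}(E,\Phi)_-\bigr)$ and using that a perfect Bott-Morse function has a local minimum precisely at critical points of index zero. Your closing caveat about matching the sheaf-theoretic weight splitting with the analytic eigenspace decomposition (and, at strictly poly-stable points, the identification of the deformation space with $\mathbb{H}^1$) is precisely the content the paper outsources to the parabolic version in \cite{GaGoMu}, so nothing further is missing relative to the paper's treatment.
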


\subsection{Local minima of $f$}
We describe polystable parabolic $\mathrm{Sp}(2n,\mathbb{R})$-Higgs bundles $(E,\Phi)$ representing local minima of the Hitchin functional $f:{{\mathsf{\mathcal{M}}}_{par}}\left( \text{Sp}\left( 2n,\mathbb{R} \right) \right)\to \mathbb{R}$ as defined earlier. The treatment is parallel to that by P. Gothen in the non-parabolic case \cite{Goth}.

\begin{prop}\label{406}
Let $(V,\beta,\gamma)$ be a poly-stable $\mathrm{Sp}(4,\mathbb{R})$-Higgs bundle.  $(V,\beta,\gamma)$ represents a local minimum of the parabolic Hitchin function if and only if one of the following holds
\begin{enumerate}
\item[(1)] If $\text{par}\deg V > 0$, then $\beta=0$.
\item[(2)] If $\text{par}\deg V=2g-2+s$, we have a decomposition $V=L_1 \bigoplus L_2$, and $\beta=\begin{pmatrix}
0 & 0 \\
0 & b
\end{pmatrix}$, $\gamma=\begin{pmatrix}
0 & c_1 \\
c_2 & 0
\end{pmatrix}$.
\end{enumerate}
\end{prop}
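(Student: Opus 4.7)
The plan is to invoke Lemma \ref{216}, which reduces the problem to determining when $\mathbb{H}^1(C^\bullet(E,\Phi)_-)=0$. Since any local minimum is a critical point of $f$, it is in particular a $\mathbb{C}^*$-fixed point, and therefore carries the weight decomposition $E(\mathfrak{g}^{\mathbb{C}})=\bigoplus_{m} U_m$ described in \S 3.1 with $\mathrm{ad}\,\Phi: U_m \to U_{m+1}\otimes K(D)$. For $\mathrm{Sp}(4,\mathbb{R})$ one has $E(\mathfrak{h}^{\mathbb{C}})=\mathrm{End}\,V$ and $E(\mathfrak{m}^{\mathbb{C}})=\mathrm{Sym}^2 V \oplus \mathrm{Sym}^2 V^\vee$, so the $\psi$-grading induces a parabolic decomposition of $V$.

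I would first enumerate the possible Hodge types. Either (i) the grading on $V$ is trivial, which forces $\Phi=0$ and therefore $\beta=\gamma=0$ (a specialization of case~(1)), or (ii) the grading is nontrivial; then polystability compels $V=L_1\oplus L_2$ with $L_1,L_2$ parabolic line bundles of distinct $\psi$-weights, and the relation $[\psi,\Phi]=i\Phi$ together with the symmetry of $\beta$ and $\gamma$ restricts these morphisms to block forms of exactly the shape claimed in the proposition. Next, for each Hodge type I would write down the positive-weight complex
\begin{align*}
C^\bullet(E,\Phi)_- : \bigoplus_{m \geq 1} U_{2m} \xrightarrow{\mathrm{ad}\,\Phi} \bigoplus_{m \geq 1} U_{2m+1} \otimes K(D)
\end{align*}
explicitly in terms of $L_1, L_2$ and $K(D)$, and compute $\mathbb{H}^1$ via the hypercohomology long exact sequence together with parabolic Serre duality.

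The key arithmetic step is to rephrase $\mathbb{H}^1=0$ as a combination of injectivity/surjectivity statements for multiplication by $\beta$ (or by $\gamma$) between the relevant parabolic $H^0$ groups. Using the parabolic Milnor-Wood bound $|\mathrm{par}\deg V|\le 2g-2+s$ for polystable parabolic $\mathrm{Sp}(4,\mathbb{R})$-Higgs bundles, I would show that whenever $\mathrm{par}\deg V > 0$ and $\beta\ne 0$, the positive-weight portion of the complex produces a nonzero obstruction class---coming from the parabolic-degree imbalance between the positive and negative $\psi$-eigenbundles of $V$---contradicting the vanishing required by Lemma~\ref{216}; this yields~(1). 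At the extremal value $\mathrm{par}\deg V = 2g-2+s$ the inequality is saturated, which pins $\gamma$ down as an isomorphism between specific parabolic line subbundles of $V$ and $V^\vee\otimes K(D)$, forces the splitting $V=L_1\oplus L_2$, and by a direct weight count produces the exact block expressions for $\beta$ and $\gamma$ asserted in~(2).

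The main obstacle I anticipate is the parabolic bookkeeping: every Serre-duality step, slope inequality, and symmetric-square identification must be carried out with Yokogawa's parabolic sheaves (and the parabolic tensor product, dual, and direct sum recalled in \S 2.1) rather than their ordinary counterparts. The overall strategy parallels Gothen~\cite{Goth} in the non-parabolic setting, but verifying that the block forms of case~(2) are compatible with polystability, and that no extra local contribution hides in the parabolic weight sum at each point of $D$, will require careful attention.
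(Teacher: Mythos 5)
Your overall strategy coincides with the paper's: reduce to Lemma \ref{216}, use the $\psi$-weight decomposition of a fixed point, enumerate the possible Hodge types for $\mathrm{Sp}(4,\mathbb{R})$, and compute $\mathbb{H}^1$ of the positive-weight subcomplex to extract the degree condition. However, your enumeration contains a genuine error that would derail case (1). You claim that if the grading on $V$ is trivial then ``$\Phi=0$ and therefore $\beta=\gamma=0$.'' This is false: when $V$ is a single $\psi$-eigenbundle (the paper's Case (a), $E=F_{-\frac{1}{2}}\oplus F_{\frac{1}{2}}$ with $V=F_{-\frac{1}{2}}$, $V^{\vee}=F_{\frac{1}{2}}$), the fixed-point condition only kills the weight-lowering block, i.e.\ it forces $\beta=0$, while $\gamma:V\to V^{\vee}\otimes K(D)$ is unconstrained. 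Indeed, for $\mathrm{par}\deg V>0$ polystability \emph{requires} $\gamma\neq 0$ (otherwise $V$ would be a destabilizing $\Phi$-invariant subbundle of $E$), so your type (i) with $\Phi=0$ is empty in exactly the range where case (1) lives, and the minima of case (1) disappear from your enumeration; they cannot be recovered from your type (ii) either, since that is the four-step decomposition giving the block forms of case (2). You also need to record that for these Case (a) fixed points the positive-weight subcomplex is zero, so they are automatically minima — this is the ``if'' half of case (1) and is not a consequence of your $\Phi=0$ claim.

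A secondary point: your assertion that ``whenever $\mathrm{par}\deg V>0$ and $\beta\neq 0$'' the positive-weight complex has nonzero $\mathbb{H}^1$ contradicts your own next sentence and statement (2) of the proposition, since at $\mathrm{par}\deg V=2g-2+s$ the minima have $\beta\neq 0$. The paper avoids this by a single computation for the nontrivially graded type: with $U_2=\mathrm{Hom}(F_{-\frac{1}{2}},F_{\frac{3}{2}})\oplus\mathrm{Hom}(F_{-\frac{3}{2}},F_{\frac{1}{2}})$ and $U_3=\mathrm{Hom}(F_{-\frac{3}{2}},F_{\frac{3}{2}})$ it obtains $\dim\mathbb{H}^1(C^{\bullet}(E,\Phi)_-)=2g-2+s-\mathrm{par}\deg V$, so vanishing holds exactly at maximality; your Serre-duality/long-exact-sequence route can reach the same formula, but you must state the conclusion uniformly rather than splitting into a (false) non-vanishing claim for all $\mathrm{par}\deg V>0$ and a separate extremal case. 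With the enumeration corrected and the dimension formula in place, the rest of your outline matches the paper's argument.
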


\begin{proof}
As we discussed, if $(E,\Phi)$ is a local minimum, then $E(\mathfrak{g^{\mathbb{C}}})=\oplus U_m$ can be written as the direct sum of stable Higgs bundles $U_m$ with respect to $\psi \in H^{0}(X,E(\mathfrak{g^{\mathbb{C}}}) \otimes K(D))$. Since we consider the Lie algebra $\mathfrak{g}=\mathfrak{sp}(4,\mathbb{R})$, and the complexification of $\mathfrak{h}$ appearing in the Cartan decomposition $\mathfrak{g}=\mathfrak{h}\oplus \mathfrak{m}$  is $\mathfrak{h}^{\mathbb{C}}=\mathfrak{u}(2)$, thus for $\deg V \geq 0$ there are only the following two possibilities
\begin{enumerate}
\item[(a)] $E= F_{-\frac{1}{2}} \oplus F_{\frac{1}{2}}$, $V=F_{-\frac{1}{2}}$, $V^{\vee}=F_{\frac{1}{2}}$
\item[(b)] $E= F_{-\frac{3}{2}} \oplus F_{-\frac{1}{2}} \oplus F_{\frac{1}{2}} \oplus F_{\frac{3}{2}}$, $V=F_{-\frac{3}{2}} \oplus F_{\frac{1}{2}}$, $V^{\vee}=F_{\frac{3}{2}} \oplus F_{-\frac{1}{2}}$.
\end{enumerate}
In Case (a), we have $V=F_{-\frac{1}{2}}$ and $V^{\vee}=F_{\frac{1}{2}}$ with Higgs field $\Phi: V \rightarrow V^{\vee} \otimes K(D)$. Thus the morphism $b=0$. This gives us case (1) of the proposition.\\
In Case (b), $\Phi \in H^{0}(X,E(\mathfrak{m}^{\mathbb{C}}))$ and there is a decomposition
\begin{align*}
E= F_{-\frac{3}{2}} \oplus F_{-\frac{1}{2}} \oplus F_{\frac{1}{2}} \oplus F_{\frac{3}{2}}
\end{align*}
with the Higgs field $\Phi=\begin{pmatrix}0 & \beta \\ \gamma & 0 \end{pmatrix}$, where
$\beta=\begin{pmatrix}
0 & 0 \\
0 & b
\end{pmatrix}, \gamma=\begin{pmatrix}
0 & c_1 \\
c_2 & 0
\end{pmatrix}$, for
\begin{align*}
b : F_{-\frac{1}{2}} \rightarrow F_{\frac{1}{2}} \otimes K(D), \quad c_1 : F_{\frac{1}{2}} \rightarrow F_{\frac{3}{2}}\otimes K(D), \quad c_2 : F_{-\frac{3}{2}} \rightarrow F_{-\frac{1}{2}}\otimes K(D).
\end{align*}
Based on the decomposition of $E$, $E(\mathfrak{g}^{\mathbb{C}}) = U_{-3} \oplus ... \oplus U_3$, where
\begin{align*}
& U_2 = Hom(F_{-\frac{1}{2}},F_{\frac{3}{2}})\oplus Hom(F_{-\frac{3}{2}},F_{\frac{1}{2}}),\\
& U_3 = Hom(F_{-\frac{3}{2}},F_{\frac{3}{2}}).
\end{align*}
By Proposition \ref{209}, we only have to focus on the positive weight spaces $U_2$ and $U_3$. We have
\begin{align*}
\dim \mathbb{H}^1(C_{G}^{\bullet }\left( E,\Phi  \right)_{-})& =2g-2+s+par \deg F_{\frac{3}{2}} -par \deg F_{-\frac{3}{2}} - (par \deg F_{\frac{1}{2}} -par \deg F_{-\frac{3}{2}})\\
&= 2g-2+s - (par \deg F_{-\frac{3}{2}}+par \deg F_{\frac{1}{2}})\\
&= 2g-2+s -  par \deg V,
\end{align*}
where $C_{G}^{\bullet }\left( E,\Phi  \right)_{-}$ is the restriction of the complex $C_{G}^{\bullet }\left( E,\Phi  \right)$ to the positive weight part. By Lemma \ref{216}, we get a minimum if and only if $par \deg V=2g-2+s$. This proves that $(E,\Phi)$ is of Case (b) if and only if $par \deg V =2g-2+s$. This finishes the proof of the proposition.
\end{proof}

The proof of the local minimum condition in the $\text{Sp}(2n,\mathbb{R})$ case for $n\ge 3$ is similar to the above discussion. We only give the result without a proof.

\begin{prop}\label{407}
Let $(V,\beta,\gamma)$ be a poly-stable $\mathrm{Sp}(2n,\mathbb{R})$-Higgs bundle for $n\ge 3$.  $(V,\beta,\gamma)$ represents a local minimum of the Hitchin functional if and only if one of the following holds
\begin{enumerate}
\item[(1)] If $\text{par}\deg V > 0$, then $\beta=0$.
\item[(2)] If $\text{par}\deg V=n(g-1+\frac{s}{2})$ and $m$ is odd, then there is a square root $L$ of the bundle $K(D)$ and a decomposition
\begin{align*}
    V=L K(D)^{-2 [\frac{n}{2}]} \oplus L K(D)^{-2 [\frac{n}{2}]+2} \oplus \dots \oplus L K(D)^{2 [\frac{n}{2}]}.
\end{align*}
 With respect to this decomposition, the morphisms $\beta$ and $\gamma$ have the following form
\[\beta =\left( \begin{matrix}
   0 & \cdots  & 1 & 0  \\
   \vdots  & \iddots & {} & \vdots   \\
   1 & {} & {} & {}  \\
   0 & \cdots  & {} & 0  \\
\end{matrix} \right),\,\,\,\gamma =\left( \begin{matrix}
   0 & \cdots  & 1  \\
   \vdots  &  & \vdots   \\
   1 & \cdots  & 0  \\
\end{matrix} \right),\]
where $\gamma$ is an anti-diagonal matrix.
\item[(3)] If $\text{par}\deg V=n(g-1+\frac{s}{2})$ and $n$ is even, there is a square root $L$ of the bundle $K(D)$ and a decomposition
\begin{align*}
V=L^{-1} K(D)^{2-n} \oplus L^{-1} K(D)^{4-n} \oplus \dots \oplus L^{-1} K(D)^{n}.
\end{align*}
With respect to this decomposition, the morphisms $\beta$ and $\gamma$ have the same form as in case (2) above.
\end{enumerate}
\end{prop}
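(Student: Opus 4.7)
The plan is to adapt the case analysis in the proof of Proposition \ref{406} to rank $n\geq 3$, using Proposition \ref{209} and Lemma \ref{216} as the sole Morse-theoretic inputs and then pushing the combinatorial chain analysis. By Lemma \ref{216}, $(V,\beta,\gamma)$ is a local minimum of $f$ precisely when $\mathbb{H}^1(C^\bullet(E,\Phi)_-)=0$. By Proposition \ref{209} and the discussion following it, such a critical point is a variation of Hodge structure: there exists $\psi\in H^0(X,E(\mathfrak{g}^{\mathbb{C}})\otimes K(D))$ with $d_E\psi=0$ and $[\psi,\Phi]=i\Phi$, which induces an eigenspace decomposition
\begin{equation*}
E=\bigoplus_{k} F_{k+\frac{1}{2}},\qquad E(\mathfrak{g}^{\mathbb{C}})=\bigoplus_m U_m,
\end{equation*}
where, because the structure group is $\mathrm{Sp}(2n,\mathbb{R})$, the symplectic pairing identifies $F_{-k-\frac{1}{2}}\cong F_{k+\frac{1}{2}}^{\vee}$. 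Thus $V$ is the sum of summands of negative weight and $V^{\vee}$ the sum of positive weight pieces, and $\Phi$ is encoded by holomorphic maps $\varphi_{k}:F_{k+\frac{1}{2}}\to F_{k+\frac{3}{2}}\otimes K(D)$ that are compatible with the symplectic form.

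First I would dispose of Case (1): when $\mathrm{par}\deg V>0$ and $\beta\neq 0$, I will argue as in Proposition \ref{406} that the chain containing $\beta$ has length $2$ only (i.e. $E=F_{-1/2}\oplus F_{1/2}$), and compute $\dim \mathbb{H}^1(C^\bullet_-)$ as a parabolic Euler-characteristic to show that it vanishes iff $\mathrm{par}\deg V$ is maximal; for intermediate strictly positive values it is nonzero unless $\beta=0$. Conversely, if $\beta=0$ then $\Phi$ maps $V\to V^\vee\otimes K(D)$, so the only positive weight piece of $E(\mathfrak{g}^{\mathbb{C}})$ carrying $\mathrm{ad}\Phi$ is trivial and $\mathbb{H}^1(C^\bullet_-)=0$ automatically.

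For Cases (2) and (3), the maximal parabolic degree $\mathrm{par}\deg V=n(g-1+\tfrac{s}{2})$ forces the Hodge chain to be fully stretched: the decomposition
\[E=F_{-n+\frac{1}{2}}\oplus F_{-n+\frac{3}{2}}\oplus\cdots\oplus F_{n-\frac{1}{2}}\]
must consist of $2n$ line bundles $F_j$, and each connecting map $\varphi_j$ must be generically an isomorphism. To see this I will combine two ingredients: (i) polystability forces each $F_j$ to be a line bundle of the same parabolic slope, and (ii) a direct computation of $\dim \mathbb{H}^1(C^\bullet_-)$ as $\sum_{k\ge 1}\chi(U_{2k+1}\otimes K(D))-\chi(U_{2k})$ gives, by Serre duality and the chain structure, a nonnegative combination of parabolic degree differences between consecutive $F_j$'s. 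Vanishing of $\mathbb{H}^1$ then forces each $\varphi_j$ to be a parabolic isomorphism, giving $F_{j+1}\cong F_j\otimes K(D)$. Setting $F_{1/2}=L$ when $n$ is odd (so $L$ is the central piece) yields the decomposition in (2); when $n$ is even the symmetry of the chain around $0$ forces the central pair to be $L^{-1}K(D)$ and $L^{-1}K(D)^{-1}$ with $L^2=K(D)$, producing (3). The explicit anti-diagonal form of $\beta$ and $\gamma$ follows because, on each symmetric pair $F_{-k-1/2}\leftrightarrow F_{k+1/2}$, the isomorphism $\varphi$ and the symplectic constraint pin the entries to a permutation of the anti-diagonal, which we can normalize to $1$.

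The main obstacle will be the bookkeeping in step three: carefully identifying which $U_m$ contribute to the positive weight subcomplex $C^\bullet_-$ under the $\mathfrak{sp}(2n,\mathbb{C})$-grading (as opposed to the full $\mathfrak{gl}(2n,\mathbb{C})$-grading), verifying that each $\varphi_j$ must be an isomorphism rather than merely nonzero, and handling the square-root existence together with the parity dichotomy. The parabolic subtlety is that ``isomorphism'' here means strictly compatible with the weights, so one must check that the induced parabolic weights on $F_{j+1}$ and $F_j\otimes K(D)$ agree; this is where the denominator $2$ hypothesis and the trivial flag assumption are used, exactly paralleling \cite{Goth} but with parabolic degree replacing ordinary degree throughout.
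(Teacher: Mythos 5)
The paper itself gives no argument for Proposition \ref{407} beyond the remark that it is ``similar'' to Proposition \ref{406}, so you are being asked to supply the Gothen-style analysis, and your skeleton (critical points are variations of Hodge structure, Lemma \ref{216} as the minimality criterion, an Euler-characteristic evaluation of $\dim\mathbb{H}^1(C^{\bullet}(E,\Phi)_-)$) is indeed the intended route. However, two load-bearing steps are wrong or missing. First, your ingredient (i), ``polystability forces each $F_j$ to be a line bundle of the same parabolic slope,'' is false: the Hodge summands $F_j$ are not $\Phi$-invariant subbundles, so polystability of $(E,\Phi)$ says nothing about their ranks or slopes (indeed the $\beta=0$ minima have a single piece of rank $n$). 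That a minimum with $\beta\neq 0$ must be the full chain of $2n$ line bundles has to be extracted from the vanishing of $\mathbb{H}^1(C^{\bullet}_-)$ itself, i.e.\ from the condition that $\mathrm{ad}\,\Phi\colon U_k\to U_{k+1}\otimes K(D)$ be an isomorphism for every $k\ge 1$, checked against each of the finitely many admissible weight decompositions of $V$ (including summands of rank $\ge 2$ and mixed-sign weights); that case-by-case analysis is the actual content of the proposition and is absent from your sketch. Second, the blanket claim ``vanishing of $\mathbb{H}^1$ forces each $\varphi_j$ to be a parabolic isomorphism'' proves too much: applied to $n=2$ it would contradict Proposition \ref{406}(2), where the maximal minima have arbitrary line bundles $L_1,L_2$ and arbitrary maps $b,c_1,c_2$. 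The whole point of $n\ge 3$ is that the adjoint bundle has more positive-weight pieces, so the isomorphism conditions in weights $k\ge 2$ become numerous enough to rigidify the chain; your argument never identifies where $n\ge 3$ enters.

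There are also structural slips that would derail the bookkeeping. The symplectic form does give $F_{-\lambda}\cong F_{\lambda}^{\vee}$, but $V$ is \emph{not} the sum of the negative-weight pieces: in the chains of cases (2)--(3) the summands of $V$ alternate along the chain (for $n=3$, $V$ carries weights $-\tfrac52,-\tfrac12,+\tfrac32$), and taking $V$ to be the negative-weight half would force $\gamma$ to have rank one, contradicting that $\gamma$ is an isomorphism in the maximal case. Likewise, in your Case (1) you speak of ``$\beta\neq 0$ with $E=F_{-1/2}\oplus F_{1/2}$,'' but in that two-step decomposition $\beta$ has weight $-1$ and hence vanishes automatically; $\beta\neq 0$ forces at least three weights, and it is precisely those longer, not-necessarily-line-bundle configurations whose $\mathbb{H}^1(C^{\bullet}_-)$ you must show is nonzero unless the degree is maximal and the chain is the full Fuchsian one. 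Finally, two smaller points: an isomorphism $\varphi_j\colon F_j\to F_{j+1}\otimes K(D)$ gives $F_{j+1}\cong F_j\otimes K(D)^{-1}$, not $F_j\otimes K(D)$; and identifying $\dim\mathbb{H}^1(C^{\bullet}_-)$ with $-\chi(C^{\bullet}_-)$ uses the vanishing of $\mathbb{H}^0$ and $\mathbb{H}^2$ of $C^{\bullet}_-$ at stable points, which should be stated (it is also implicit in the computation in the proof of Proposition \ref{406}).
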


\section{Parabolic Higgs Bundles vs. Higgs $V$-Bundles}
In this section we review the correspondence between parabolic Higgs bundles and Higgs $V$-bundles. Further details may be found in \cite{KySZ} and the references therein.
\subsection{Background and Correspondence}
Let $X$ be a $k$-dimensional manifold with $s$-many marked points $x_1,\dots,x_s$. For each marked point, there is a linear representation $\sigma_i: \Gamma_i \rightarrow \text{Aut}(\mathbb{R}^k)$ of a cyclic group ${{\Gamma }_{i}}=\left\langle {{\sigma }_{i}} \right\rangle $, $1 \leq i \leq s$, where $\Gamma_i$ acts freely on $\mathbb{R}^k \backslash \{0\}$ together with an atlas of coordinate charts
\begin{align*}
& \phi_i: U_i \rightarrow D^k / \sigma_i, & 1 \leq i \leq s;\\
& \phi_p: U_p \rightarrow D^k, & p \in X \backslash \{x_1,...,x_s\}.
\end{align*}
We assume that $\Gamma_i$ is the cyclic group $\mathbb{Z}_{m_i}$, where $m_i$ is a positive integer. An orbifold $M$ is obtained by gluing the local coordinate charts above, while $X$ is the underlying manifold of $M$. In \cite{FuSt}, M. Furuta and B. Steer consider this construction as a \emph{$V$-manifold}. The case we are interested in is when $M=[Y / \Gamma]$, where $Y$ is a closed, connected, smooth Riemann surface and $\Gamma$ is a finite group acting effectively on $Y$. In this case, we say that $M$ is a \emph{$V$-surface}. From the definition of the $V$-manifold, we see that the $V$-manifold is an orbifold, therefore we prefer to use the notation $[Y / \Gamma]$ to emphasize its $V$-manifold or orbifold structure. The notation $Y/\Gamma$ is the usual quotient. In this paper, $Y/\Gamma$ gives the underlying surface $X$ of $M$.

A \emph{holomorphic $V$-bundle} $E$ of rank $n$ over $M$ is defined locally on the charts as above with a collection of isotropy representations $\tau_i: \Gamma_i \rightarrow \text{Aut}(C^n)$ and local trivializations $\theta_i : E|_{U_i} \rightarrow D^k \times C^n / \sigma_i \times \tau_i$, for $1 \leq i \leq s$. Let $m=m_i$ for all $i$. The local trivialization $\Theta$ is $\mathbb{Z}_m$-equivariant with respect to the action
\begin{align*}
t(z;z_1,z_2,...,z_n)=(tz;t^{k_1}z_1,t^{k_2}z_2,...,t^{k_n}z_n).
\end{align*}

We will define the \emph{$V$-Higgs field} over the local chart $[U / \mathbb{Z}_m]$. The $V$-Higgs field defined on local charts can be glued naturally over the $V$-manifold $M$. We define $\Phi$ to be a Higgs field over the local chart $[U /\mathbb{Z}_m]$ as follows:
\begin{align*}
\Phi=(\phi_{ij})_{1 \leq i,j \leq n},
\end{align*}
where
\begin{align*}\tag{4}
\phi_{ij}=
\begin{cases}
z^{k_{i}-k_{j}} \hat{\phi}_{ij}(z^{m})\frac{dz}{z} & \text{ if } k_i \geq k_j\\
0 & \text{ if } k_i < k_j,
\end{cases}
\end{align*}
and $\hat{\phi}_{ij}$ are holomorphic functions on $\widetilde{E}$.

\begin{defn}\label{301}
A \emph{Higgs $V$-bundle} over a $V$-surface $M$ is a pair $(E,\Phi)$, where $E$ is a holomorphic $V$-bundle and $\Phi$ is a $V$-Higgs field.
\end{defn}

\begin{thm}[Theorem 6.8 in \cite{KySZ}]\label{303}
There is a bijective correspondence between isomorphism classes of holomorphic Higgs $V$-bundles with good trivialization $\left( E,\Theta ,\Phi  \right)$ and isomorphism classes of parabolic Higgs bundles $\left( F,\tilde{\Theta },\tilde{\Phi } \right)$.
\end{thm}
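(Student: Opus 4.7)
The plan is to establish the bijection by constructing explicit inverse functors in both directions, working locally around each marked point. On the $V$-bundle side the data is $\mathbb{Z}_{m_i}$-equivariant on a uniformizing disc, while on the parabolic side the data is a flag with weights; the translation between the two is dictated by the character decomposition of the isotropy representation $\tau_i$.

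For the forward direction, starting with a Higgs $V$-bundle $(E,\Theta,\Phi)$ with good trivialization, I would define $F$ as the invariant pushforward $\rho_*^{\mathbb{Z}_m}(E)$ under the quotient $\rho\colon M\to X$. Away from $D$ the action is free so $F$ is locally free of rank $n$; at each $x_i\in D$, the isotropy representation $\tau_i\colon \mathbb{Z}_{m_i}\to\text{Aut}(\mathbb{C}^n)$ decomposes the local fibre into character subspaces with weights $k_1,\ldots,k_n$ (arranged in increasing order, reduced mod $m_i$), and these weights give the parabolic flag and weight system $\alpha_j(x_i)=k_j/m_i$. The $V$-Higgs field $\Phi$ is $\mathbb{Z}_{m_i}$-equivariant by virtue of the local shape $(4)$; the $\tfrac{dz}{z}$ factor accounts for a simple pole at $x_i$ after descent, producing a morphism with target $F\otimes K(D)$, and the $z^{k_i-k_j}$ prefactor with the vanishing when $k_i<k_j$ forces $\tilde{\Phi}$ to preserve the parabolic filtration in the sense of Definition \ref{205}. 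This gives the parabolic Higgs bundle $(F,\tilde{\Theta},\tilde{\Phi})$.

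For the inverse direction, given $(F,\tilde{\Theta},\tilde{\Phi})$, I would reconstruct $(E,\Theta,\Phi)$ by pulling back along the uniformizing chart $z\mapsto z^{m_i}$ near each $x_i$, defining the isotropy action by $t\cdot(z;z_1,\ldots,z_n)=(tz;t^{k_1}z_1,\ldots,t^{k_n}z_n)$ with $k_j=m_i\alpha_j(x_i)$. The good trivialization $\tilde{\Theta}$ of $F$ adapted to the flag lifts to an equivariant trivialization $\Theta$. Away from the marked points the construction is simply pullback of $F$ to $M$, and these local models glue. To obtain $\Phi$, I pull back $\tilde{\Phi}$ along $z=w^{m_i}$ in the local coordinate; the substitution $\tfrac{dz}{z}=m_i\tfrac{dw}{w}$ converts the allowed simple pole of $\tilde{\Phi}$ at $x_i$ into a logarithmic $V$-form on the uniformizer, and the parabolic compatibility condition on $\tilde{\Phi}$ translates into precisely the case distinction in $(4)$ governing when the entry $\phi_{ij}$ must vanish or may carry an explicit $z^{k_i-k_j}$ factor.

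The main technical obstacle is verifying that the two local constructions are mutually inverse on the level of the Higgs field itself rather than just the underlying bundle. Concretely, one must check that a morphism $\tilde{\Phi}\colon F\to F\otimes K(D)$ is parabolic (in the sense that $\tilde\Phi(F_{x,i})\subseteq F_{x,i+1}\otimes K(D)_x$, or more generally respects the weight comparison) if and only if its pullback has local expansion exactly of the form $(4)$ in every equivariant frame. This is a careful power-series computation in a local uniformizer, weighted by the characters of $\tau_i$; once it is settled, isomorphisms of $V$-Higgs bundles preserving the good trivialization correspond tautologically to isomorphisms of parabolic Higgs bundles preserving $\tilde{\Theta}$, so bijectivity on isomorphism classes is immediate, and functoriality of $\rho_*$ and $\rho^*$ yields the bijection as claimed in \cite{KySZ}.
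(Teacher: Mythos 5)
Your proposal is essentially the argument behind this statement: the paper itself does not prove Theorem \ref{303} (it is imported as Theorem 6.8 of \cite{KySZ}), and your route — taking $F$ as the invariant pushforward with flag and weights $\alpha_j(x_i)=k_j/m_i$ read off the characters of the isotropy representation, reconstructing $(E,\Theta,\Phi)$ by equivariant pullback along $z\mapsto z^{m_i}$, and matching the local normal form $(4)$ (the $\frac{dz}{z}$ factor descending to a simple pole, the $z^{k_i-k_j}$ factors and forced vanishing when $k_i<k_j$ encoding filtration preservation) — is exactly the standard local-descent proof used in that reference and its antecedents (Furuta--Steer, Nasatyr--Steer, Biswas--Majumder--Wong). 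One small correction: the condition you parenthesize, $\tilde{\Phi}(F_{x,i})\subseteq F_{x,i+1}\otimes K(D)_x$, is the \emph{strongly} parabolic condition, whereas for the correspondence as stated (the $\frac{dz}{z}$-twisted, i.e.\ $K(D)$-valued case with trivial monodromy, cf.\ \S 5.2) the correct requirement is only the weak one — your ``respects the weight comparison'' — since the diagonal graded blocks of the residue need not vanish; with that understood, the deferred power-series check is precisely the content of the cited proof and goes through as you describe.
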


\begin{rem}\label{304}
In general, the $V$-Higgs field $\Phi=(\phi_{ij}) \in H^0(\text{End}_0 (E)\otimes K)$ is $\mathbb{Z}_m$-equivariant, where $\text{End}_0(E)$ is the traceless homomorphism of $E$ and the action of $\mathbb{Z}_m$ on $\text{End}_0 (E)\otimes K$ is conjugation. Under the conjugation action, we have
\begin{align*}
\phi_{ij}=
\begin{cases}
z^{k_{i}-k_{j}} \hat{\phi}_{ij}(z^{m})\frac{dz}{z} & \text{ if } k_i \geq k_j\\
z^{k_{i}-k_{j}} \hat{\phi}_{ij}(z^{m})\frac{dz}{z} & \text{ if } k_i < k_j.
\end{cases}
\end{align*}
Details can be found in \cite{NaSt}. In this paper, we slightly change the $V$-Higgs field and there are two reasons for doing so. The first reason is that if $k_i \leq k_j$, then $z^{k_{i}-k_{j}}$ possibly describes a meromorphic section, not holomorphic. The second reason is that if $\phi_{ij}$ is not trivial when $k_i < k_j$, then the corresponding Higgs field may not preserve the filtration.
\end{rem}

There is a natural way to define the degree of a holomorphic $V$-bundle $E$ on a $V$-surface (see \cite[\S 1]{FuSt}). A holomorphic $V$-bundle $E$ is \emph{stable} (resp. \emph{semistable}), if for every non-trivial proper subbundle $F \le E$ we have $\text{par}\mu \left( F \right)<\text{par}\mu \left( E \right)$ (resp. $\le $). Note that a $V$-surface is usually considered as an orbifold, which is also considered as a root stack in the language of algebraic geometry. With respect to the above stability condition, it has been shown that every semistable $V$-bundle admits a Jordan-H\"older filtration, which induces the definition of isomorphism classes of $V$-bundles, also known as the $S$-equivalent classes (see \cite{Simp2010} and \cite[\S 3.4]{Sun2020}). Based on the semistability condition and Jordan-H\"older filtration, there exists a moduli space for Higgs bundles on a $V$-surface, or more generally, on a projective Deligne-Mumford stack (see \cite[\S 9]{Simp2010} or \cite[Theorem 6.7]{Sun2020}). However, in this paper, we only focus on the correspondence between points in the moduli space of parabolic Higgs bundles and the moduli space of Higgs $V$-bundles.

Under the correspondence above (Theorem \ref{303}), we have
\begin{equation*}
\deg(E)= par\deg (F).
\end{equation*}
In conclusion, the latter equality for the degree provides the correspondence of the moduli spaces:
\begin{prop}[Proposition 5.9 in \cite{FuSt}]\label{305}
There is a bijective correspondence between isomorphism classes of holomorphic semistable (resp. stable) Higgs $V$-bundles with good trivialization $\left( E,\Theta ,\Phi  \right)$ and isomorphism classes of semistable (resp. stable) parabolic Higgs bundles $\left( F,\tilde{\Theta },\tilde{\Phi } \right)$.
\end{prop}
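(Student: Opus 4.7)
The plan is to leverage Theorem \ref{303}, which already supplies the bijection at the level of isomorphism classes (without stability), and upgrade it to one that preserves the (semi)stability condition. Since this amounts to a statement about two equivalent moduli-theoretic conditions under an already-established equivalence of categories, the problem reduces to verifying two facts: (i) $\Phi$-invariant holomorphic $V$-subbundles $E' \subseteq E$ correspond bijectively to $\tilde{\Phi}$-invariant parabolic subbundles $F' \subseteq F$, and (ii) under this bijection, slopes match, i.e.\ $\mu(E') = \mathrm{par}\mu(F')$ and $\mu(E) = \mathrm{par}\mu(F)$.

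For (i), I would work locally on each orbifold chart $[U_i/\mathbb{Z}_{m_i}]$ around a marked point $x_i \in D$. A $\Phi$-invariant $V$-subbundle of $E$ is, in local coordinates, a $\mathbb{Z}_{m_i}$-invariant holomorphic subbundle of the pullback to the cover $U_i$, and any such subbundle decomposes into isotypic components under $\tau_i: \mathbb{Z}_{m_i} \to \mathrm{Aut}(\mathbb{C}^n)$. This isotypic decomposition is precisely what dictates a weighted flag on the fiber $F_{x_i}$, and the flag one obtains is the same as the intersection flag from the Definition \ref{204} formulation of parabolic subbundle applied to the parabolic bundle $F'$ associated with $E'$ via Theorem \ref{303}. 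Conversely, a parabolic subbundle of $F$ with the intersection flag and induced weights glues, chart by chart, to a $\mathbb{Z}_{m_i}$-equivariant holomorphic subbundle on the cover, hence to a $V$-subbundle of $E$. Compatibility with the Higgs fields is automatic from the functoriality of Theorem \ref{303} and the explicit form of the $V$-Higgs field displayed in equation (4).

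For (ii), the identity $\deg(E) = \mathrm{par}\deg(F)$ stated just before the proposition follows from distributing the isotropy weights $k_j/m_i$ as parabolic weights at each marked point. The very same local computation applied to the subbundle $E'$ gives $\deg(E') = \mathrm{par}\deg(F')$, while $\mathrm{rk}(E') = \mathrm{rk}(F')$ is built into the correspondence. Thus $\mu(E') = \mathrm{par}\mu(F')$ and $\mu(E) = \mathrm{par}\mu(F)$. Combining (i) and (ii), the inequality $\mu(E') < \mu(E)$ (resp.\ $\leq$) for every proper nontrivial $\Phi$-invariant $V$-subbundle $E' \subseteq E$ translates term-by-term into $\mathrm{par}\mu(F') < \mathrm{par}\mu(F)$ (resp.\ $\leq$) for every proper nontrivial $\tilde{\Phi}$-invariant parabolic subbundle $F' \subseteq F$, giving the claimed equivalence in both the stable and semistable cases.

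The main obstacle is step (i): carefully checking that the parabolic structure one reads off from the isotypic decomposition of a $V$-subbundle agrees with the intersection-flag structure prescribed by the paper's definition of parabolic subbundle, with the correct ordering of weights and the correct accounting of multiplicities $k_j(x_i)$. This is an entirely local verification at each $x_i \in D$, but it is the step that does the real work, because once it is in place the slope equality of step (ii) and the preservation of (semi)stability are immediate consequences of the degree formula already noted in the discussion preceding the proposition.
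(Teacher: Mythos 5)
Your proposal is correct and follows essentially the same route the paper takes: the paper treats this as an immediate consequence of the correspondence of Theorem \ref{303} together with the equality $\deg(E)=par\deg(F)$ (citing Furuta--Steer for the details), and your steps (i) and (ii) are exactly the natural fleshing-out of that argument, matching $\Phi$-invariant $V$-subbundles with $\tilde{\Phi}$-invariant parabolic subbundles chart by chart and then comparing slopes via the degree formula. Nothing in your outline diverges from the paper's intended proof.
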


Indeed, we construct this special $V$-manifold from the data $(X,D,m=2)$, where $X$ corresponds to the underlying surface, $D$ includes the punctures and $m=2$ corresponds to the cyclic group action around the punctures. Therefore there is a one-to-one correspondence between the rank $n$ Higgs $V$-bundles over this special $V$-manifold and elements in $\mathcal{M}_{par}(\text{Sp}(2n,\mathbb{R})) = \mathcal{M}_{par}(X,D,n,2)$ by Theorem \ref{303}. We use this correspondence to discuss the topological invariants of $\text{Sp}(2n,\mathbb{R})$-parabolic Higgs bundles over $(X,D)$ in \S 6 later on.

\subsection{Line $V$-Bundles and the $V$-Picard Group}
With the same notation as above, we have the following proposition about line $V$-bundles.
\begin{prop}[Proposition 1.4 in \cite{FuSt}]\label{306}
Isomorphism classes of line $V$-bundles on $M$ with isotropy $\sigma_1^{k_1},\dots,\sigma_s^{k_s}$ at $x_1,\dots,x_s$ are in bijective correspondence with the integers.
\end{prop}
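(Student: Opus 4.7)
The plan is to reduce the classification of line $V$-bundles with prescribed isotropy to the classification of ordinary line bundles on the underlying surface $X$, for which the answer is standard.

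First, unpack what the isotropy $\sigma_i^{k_i}$ means concretely: at each marked point $x_i$, the line $V$-bundle $L$ is locally modelled on the uniformizing disk $\widetilde{U}_i \cong D^2$ by the trivial line bundle equipped with the $\mathbb{Z}_{m_i}$-action
\[
\sigma_i \cdot (z, v) = \bigl(e^{2\pi i /m_i} z,\; e^{2\pi i k_i/m_i} v\bigr),
\]
so the isotropy datum is precisely the integer character $k_i \in \{0, 1, \ldots, m_i - 1\}$. The next step is to construct a reference line $V$-bundle $L_0$ realising exactly these characters, by gluing the trivial line bundle on $X \setminus D$ with the local models above using a fixed trivialisation around each $x_i$.

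Given any other line $V$-bundle $L$ with the same isotropy, form $L \otimes L_0^{-1}$: by construction its isotropy is trivial at every $x_i$, so it descends to a genuine line bundle on the closed surface $X$. Such line bundles are classified by the first Chern class $c_1 \in H^2(X, \mathbb{Z}) \cong \mathbb{Z}$, which supplies the desired integer invariant. The same conclusion can be reached through the parabolic-bundle correspondence (Theorem \ref{303}, Proposition \ref{305}): line $V$-bundles with isotropy $(\sigma_i^{k_i})$ correspond to parabolic line bundles on $X$ with prescribed weights $\alpha_i = k_i/m_i$, whose flag is forced to be trivial by the rank being one, so only the underlying line bundle on $X$ remains free data.

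The map $L \mapsto c_1(L \otimes L_0^{-1})$ is then the desired bijection with $\mathbb{Z}$: surjectivity follows by twisting $L_0$ with a line bundle on $X$ of any prescribed Chern class, and injectivity follows because two line $V$-bundles whose quotient has trivial isotropy and trivial first Chern class are isomorphic. The delicate point is well-definedness of the invariant, since the reference $L_0$ depends on the local trivialisation data $\Theta$ of Theorem \ref{303}; one must check that two choices of reference bundle differ by an ordinary line bundle on $X$, so that the induced $\mathbb{Z}$-parameterisation is canonical up to a fixed translation absorbed into the normalisation of $L_0$. The cleanest way to discharge this, and the step I expect to require the most care, is a direct \v{C}ech cohomology computation of $H^1$ of the unit sheaf of $M$ subject to the fixed-isotropy constraints at each $x_i$, isolating the local contribution near each marked point as a single integer given by the winding number of the transition function on a small boundary circle.
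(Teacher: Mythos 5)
Your argument is correct, but note that the paper offers no proof of this statement at all: it is quoted verbatim as Proposition 1.4 of \cite{FuSt}, so there is nothing internal to compare against. Your reduction is the standard one and is essentially the argument underlying Furuta--Steer's classification: build a reference line $V$-bundle $L_0$ realising the prescribed isotropy characters $k_i$ (gluing the local models to the trivial bundle over $X\setminus D$ works, since any line bundle over an annulus is trivial), observe that $L\otimes L_0^{-1}$ has trivial isotropy and hence is pulled back from a unique line bundle on the underlying closed surface $X$, and classify the latter by $c_1\in H^2(X,\mathbb{Z})\cong\mathbb{Z}$. Two remarks. First, the dependence on $L_0$ is harmless exactly as you say --- two references with the same isotropy differ by a bundle pulled back from $X$, so the $\mathbb{Z}$-parameterisation changes only by a translation --- and for that reason the closing \v{C}ech computation you anticipate is not needed; the bijection is already established without it. Second, the bijection with $\mathbb{Z}$ is a statement about \emph{topological} isomorphism classes (this is how the paper uses it, classifying line $V$-bundles by tuples $(d,k_1,\dots,k_s)$); your $c_1$-based argument is pitched at exactly that level, whereas a holomorphic classification would of course involve the Jacobian as well, so it is worth saying explicitly at the outset that ``isomorphism'' means topological isomorphism.
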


Proposition \ref{306} gives us a general description about all line $V$-bundles over a $V$-surface $M$. The degree of a line $V$-bundle $L$ is given by the Riemann-Roch formula for line $V$-bundles over $M$
\begin{align*}
\dim H^0(M,L)- \dim H^1(M,L)= 1- g +c_1(L)-\sum_{i=1}^s \frac{k_i}{m_i},
\end{align*}
where $c_1(L)$ is the degree of the line $V$-bundle $L$. M. Furuta and B. Steer in \cite{FuSt} showed that tuples of the form $(d,k_1,...,k_s)$ classify the topological isomorphism classes of line $V$-bundles over $M$, where $d$ is an integer and $0 \leq k_i < m_i$, $1 \leq i \leq s$. In other words, the tuple $(d,k_1,...,k_s)$ is a topological invariant for a line $V$-bundle $L$. It is easy to check that the tensor product of line $V$-bundles is still a line $V$-bundle. Therefore, the collection of all line $V$-bundles has a natural group structure.

\begin{defn}[$V$-Picard Group]\label{307}
Let $\text{Pic}_V(M)$ denote the set (or group) of isomorphism classes of line $V$-bundles over $M$.
\end{defn}

\begin{cor}[Corollary 1.6 in \cite{FuSt}]\label{308}
We have a natural map
\begin{gather*}
\text{Pic}_V(M) \longrightarrow \mathbb{Q} \oplus \left( \bigoplus_{i=1}^s \mathbb{Z}/ m_i \right)\\
L \mapsto (c_1(L),\overrightarrow{k}),
\end{gather*}
for $\overrightarrow{k}=(k_1,\dots,k_s)$ an injective homomorphism with image
$\{c_{1},(k_i \text{ mod } m_i)\}$, and where the first Chern class
$c_{1} \equiv \sum \frac{k_i}{m_i} (\text{ mod } \mathbb{Z})$.
\end{cor}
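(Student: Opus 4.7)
The plan is to verify that the assignment $L \mapsto (c_1(L),\vec{k}(L))$ is a well-defined group homomorphism and then establish injectivity and the image description by building everything out of Proposition \ref{306}. First I would check the map is a group homomorphism: tensor products of line $V$-bundles are line $V$-bundles, and on each chart $[U_i/\mathbb{Z}_{m_i}]$ the isotropy representation $\tau_i : \Gamma_i \to \mathrm{Aut}(\mathbb{C})$ of a tensor product is the tensor of the isotropies, so the exponents $k_i$ add modulo $m_i$; the first Chern class is additive by the usual splitting principle or by viewing $c_1$ as a homomorphism from $\mathrm{Pic}_V(M)$ to $H^2(M,\mathbb{Q})$, where $M$'s $V$-cohomology in degree two is $\mathbb{Q}$. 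Hence the map lands in $\mathbb{Q} \oplus \bigoplus_{i=1}^{s}\mathbb{Z}/m_i$ and respects the group laws.

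Next I would establish the congruence $c_1(L) \equiv \sum_i k_i/m_i \pmod{\mathbb{Z}}$. The idea is to tensor $L$ with a standard line $V$-bundle $\mathcal{O}_M(-\tfrac{1}{m_i}x_i)$ of isotropy $-k_i$ at $x_i$ (and zero at other punctures) for each $i$, chosen so that the resulting line $V$-bundle $L'$ has trivial isotropy at every marked point. Such an $L'$ descends to an honest line bundle on the underlying surface $X$, whose degree is an integer. Comparing Chern classes through the tensoring, $c_1(L) - \sum_i k_i/m_i = c_1(L') \in \mathbb{Z}$, which gives the congruence; this also shows the map takes values in the claimed subgroup. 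Injectivity is where Proposition \ref{306} enters directly: if $L_1$ and $L_2$ have the same isotropy tuple $\vec{k}$, they represent the same class in the set of line $V$-bundles with prescribed isotropy, which by Proposition \ref{306} is indexed by $\mathbb{Z}$; the parameter is detected by $c_1$ modulo the congruence above, so equality of $c_1(L_1)$ and $c_1(L_2)$ forces $L_1 \cong L_2$.

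For surjectivity onto the subset cut out by the congruence, given any tuple $(c,\vec{k})$ with $c - \sum_i k_i/m_i \in \mathbb{Z}$, one constructs a candidate as $L_0 \otimes \bigotimes_i \mathcal{O}_M(\tfrac{k_i}{m_i} x_i)$, where $L_0$ is the pullback from $X$ of a line bundle of degree $c - \sum_i k_i/m_i$. A direct computation of isotropies and Chern classes on each chart shows this realizes the prescribed data. The main obstacle is the Chern class computation on the orbifold: one must verify carefully that the ``fractional'' line $V$-bundles $\mathcal{O}_M(\tfrac{1}{m_i}x_i)$ contribute exactly $1/m_i$ to $c_1$ and isotropy $1$ at $x_i$, which reduces to computing the degree of a line $V$-bundle on a disk orbifold $[D/\mathbb{Z}_{m_i}]$ using the local trivialization and the Riemann--Roch formula for line $V$-bundles already cited in the text. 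Once this local computation is in hand, the global statement assembles by additivity.
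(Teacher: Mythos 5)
The paper offers no proof of this statement at all: it is imported verbatim as Corollary 1.6 of Furuta--Steer \cite{FuSt}, so there is nothing internal to compare against. Your argument is essentially the standard one behind that citation and is correct in outline: additivity of the isotropy exponents and of $c_1$ gives the homomorphism; twisting by the basic point bundles $\mathcal{O}_M\bigl(\tfrac{k_i}{m_i}x_i\bigr)$ (note the exponent should be $k_i$, not $1$, in your notation) produces a bundle with trivial isotropy, which is pulled back from the coarse curve $X$ and hence has integral $c_1$, giving both the congruence $c_1\equiv\sum_i k_i/m_i \pmod{\mathbb{Z}}$ and the image description; and the local computation you flag, that $\mathcal{O}_M\bigl(\tfrac{1}{m_i}x_i\bigr)$ has isotropy the generator of $\mathbb{Z}_{m_i}$ and $c_1=\tfrac{1}{m_i}$, is indeed the only real content and goes as you describe.

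Two points need tightening. First, injectivity: $L_1$ and $L_2$ with the same $\overrightarrow{k}$ do not ``represent the same class'' in the fixed-isotropy set -- that set is $\mathbb{Z}$-indexed by Proposition \ref{306}, and you must still show $c_1$ separates the integer parameter. The clean way is to pass to $L_1\otimes L_2^{-1}$: it has trivial isotropy at every $x_i$, hence is a pullback from $X$, and its first Chern class is the integer $c_1(L_1)-c_1(L_2)=0$, so it is trivial. Second, that last triviality (and hence injectivity) is valid for \emph{topological} isomorphism classes, which is the setting of Proposition \ref{306} and of Furuta--Steer's Corollary 1.6; if one reads $\mathrm{Pic}_V(M)$ holomorphically, the kernel contains the pullback of the Jacobian of $X$ and the map is not injective for $g\ge 1$, so you should state explicitly that you work in the topological category throughout.
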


In conclusion, Corollary \ref{308} gives us the following exact sequence
 \begin{align*}
        0 \rightarrow \text{Pic}(X) \rightarrow \text{Pic}_V(M) \rightarrow \bigoplus_{i=1}^s \mathbb{Z}_{m_i} \rightarrow 0.
        \end{align*}

\section{The Beauville-Narasimhan-Ramanan Correspondence}
The Beauville-Narasimhan-Ramanan (BNR) correspondence (Proposition 3.6 in \cite{BNR}) provides a very useful tool for studying the spectral curves constructed via the Hitchin fibration. It gives a one-to-one correspondence between the category of twisted Higgs bundles $\left( E,\Phi  \right)$ over a smooth Riemann surface and line bundles over the spectral curve. An application of the correspondence is that the regular Hitchin fiber is holomorphically equivalent to the Prym variety of the spectral covering (see \cite{BaScha}, \cite{BNR}, \cite{Hit}).

In this section, we study the BNR correspondence for twisted Higgs $V$-bundles and twisted parabolic Higgs bundles. We first fix notation. Let $X$ be a Riemann surface, $D=\{x_1,...,x_s\}$ a fixed set of $s$-many points on $X$ and let $m$ be a fixed positive integer. With respect to this integer $m$, let $M$ be the corresponding $V$-surface of $X$ such that the local chart around $x_i$ is isomorphic to $\mathbb{C} / \mathbb{Z}_m$, where $\mathbb{Z}_m$ the cyclic group. The $V$-surface $M$ is uniquely determined by the data $(X,D,m)$. In \S 4.1, we discussed a special case of this construction corresponding to $(X,D,2)$. Let $\mathcal{L}$ be a line $V$-bundle over $M$, and $\mathbb{L}$ be the line bundle over $X$ under the correspondence of Theorem \ref{303}.

\subsection{BNR Correspodence for Parabolic Higgs Bundles}
Let $E \rightarrow X$ be a rank $n$ parabolic vector bundle over $X$. For each point $x \in D$, we fix a weight tuple $\alpha(x)=(\alpha_1(x),...,\alpha_n(x))$ such that $0 \leq \alpha_1(x) \leq \dots \leq \alpha_n(x) <1$ and every $\alpha_i(x)$ can be written as a fraction with denominator $m$, $ 1 \leq i \leq n$, $x \in D$. Let $\pi: \text{tot}(K(D)) \rightarrow X$ be the natural projection, where $\text{tot}(K(D))$ is the total space of the line bundle. Denote by $\lambda$ the tautological section of $\pi^*K(D)$. Given $\eta=(\eta_i)$ a set of sections, where $\eta_i$ is a section of $K(D)^i$ for $1 \leq i \leq n$, we define $X_\eta$ as the zero set of the function
\begin{align*}
\lambda^n + \eta_1 \lambda^{n-1}+ \eta_2 \lambda^{n-2} + \dots + \eta_n.
\end{align*}
We still denote by $\pi: X_\eta \rightarrow X$ the projection of the degree $n$ spectral covering. We also assume that the intersection of the set of branch points $B$ and the given divisor $D$ is empty. The importance of this assumption will be demonstrated in Remark \ref{3141} later on.

\begin{defn}\label{312}
A parabolic line bundle $(L,\widetilde{\alpha})$ over $X_\eta$ is \emph{compatible} with the given parabolic structure $\alpha$, if
\begin{align*}
\{\widetilde{\alpha}(\widetilde{x}), \widetilde{x} \in \pi^{-1}(x)\}=\{\alpha_i(x), 1 \leq i \leq n\}.
\end{align*}

Fix an order on the pre-image $\pi^{-1}(x)=\{\widetilde{x}_1,...,\widetilde{x}_n\}$ of each $x \in D$. A parabolic line bundle is \emph{strictly compatible} with the given parabolic structure $\alpha$ and the given order of the pre-image set, if
\begin{align*}
\widetilde{\alpha}(\widetilde{x}_i)=\alpha_i(x).
\end{align*}
\end{defn}

\begin{thm}\label{313}
Let $X$ be a closed Riemann surface, $D=\{x_1,...,x_s\}$ a fixed set of $s$-many points on $X$ and let $m$ be a fixed positive integer. Denote by   $K=\Omega _{X}^{1}$ the canonical line bundle over $X$  and consider $K\left( D \right):=K\otimes {{\mathsf{\mathcal{O}}}_{X}}\left( D \right)$. Fix a parabolic structure $\alpha$ for a rank $n$ parabolic bundle over $X$ and a tuple of sections $\eta=(\eta_i)$, where $\eta_i$ is a section of $K(D)^i$ for $1 \leq i \leq n$. Assume that the surface $X_\eta$ is non-singular and the intersection of the branch points $B$ and the given divisor $D$ is empty. For a fixed order of the pre-image $\pi^{-1}(x)=\{\widetilde{x}_1,...,\widetilde{x}_n\}$ of each $x \in D$, there is a bijective correspondence between isomorphism classes of strictly compatible parabolic line bundles $(L,\widetilde{\alpha})$ on $X_\eta$ and isomorphism classes of pairs $(E, \Phi)$, where $E$ is a parabolic bundle of rank $n$ with parabolic structure $\alpha$ and $\Phi:E \rightarrow E \otimes K(D)$ a parabolic Higgs field with characteristic coefficients $\eta_i$.
\end{thm}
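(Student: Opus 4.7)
The plan is to bootstrap from the classical (non-parabolic) BNR correspondence of \cite{BNR} and superimpose the parabolic data fiberwise at the points of $D$, using the \'etaleness of the spectral cover there. The hypothesis $B \cap D = \emptyset$ forces the covering $\pi : X_\eta \to X$ to be \'etale over every $x \in D$, so $\pi^{-1}(x) = \{\widetilde{x}_1, \ldots, \widetilde{x}_n\}$ consists of exactly $n$ distinct points, and non-singularity of $X_\eta$ ensures that the sheaves appearing in the classical correspondence are genuine line bundles. This is the key geometric input: the parabolic data at each $x \in D$ will correspond fiberwise to the parabolic data at the $n$ points of $\pi^{-1}(x)$.

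First I would address the forward direction $(L, \widetilde{\alpha}) \mapsto (E, \Phi, \alpha)$. Classical BNR produces the pair $E := \pi_* L$ with Higgs field $\Phi$ obtained by pushing forward multiplication by the tautological section $\lambda$ of $\pi^* K(D)$. Over $x \in D$ the \'etaleness yields a canonical decomposition $E_x = \bigoplus_{i=1}^n L_{\widetilde{x}_i}$, and after trivializing $K(D)$ locally near $x$ the map $\Phi_x$ acts diagonally on this decomposition, with eigenvalue $\lambda(\widetilde{x}_i)$ on the $i$-th summand. I then build the parabolic flag on $E_x$ by grouping these summands according to their weights $\widetilde{\alpha}(\widetilde{x}_i) = \alpha_i(x)$, under the prescribed ordering of $\pi^{-1}(x)$. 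Because $\Phi_x$ preserves every direct-sum subspace assembled from its eigenlines, it in particular preserves each member of the resulting flag, which by the analysis of Definition \ref{205} is precisely the condition for $\Phi : E \to E \otimes K(D)$ to be a parabolic morphism.

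For the backward direction $(E, \Phi, \alpha) \mapsto (L, \widetilde{\alpha})$, classical BNR produces $L$ as the cokernel of $\pi^* E \otimes \pi^* K(D)^{-1} \xrightarrow{\pi^*\Phi - \lambda\cdot\mathrm{id}} \pi^* E$, which is invertible by the non-singularity of $X_\eta$. Over each $\widetilde{x}_i$ the fiber $L_{\widetilde{x}_i}$ is canonically the $\lambda(\widetilde{x}_i)$-eigenline of $\Phi_x$, and non-singularity of $X_\eta$ combined with $B \cap D = \emptyset$ guarantees that the $n$ eigenvalues at $x \in D$ are pairwise distinct, so $E_x = \bigoplus_i L_{\widetilde{x}_i}$. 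I set $\widetilde{\alpha}(\widetilde{x}_i) := \alpha_i(x)$ under the fixed ordering, making $(L, \widetilde{\alpha})$ strictly compatible by construction. Mutual inverseness of the two constructions reduces, away from $D$, to the classical statement of \cite{BNR}, and over each $x \in D$ to the tautological matching of the eigenline decomposition of $E_x$ with the ordered parabolic flag.

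The main technical obstacle is verifying, in the backward direction, that the parabolic flag on $E$ is genuinely compatible with the eigenspace decomposition of $\Phi_x$, i.e., that each filtration step $E_{x,i}$ is a direct sum of some subset of the eigenlines. Unpacking Definition \ref{205} with the same weight system on $E$ and $E \otimes K(D)$ yields $\Phi_x(E_{x,i}) \subseteq E_{x,i} \otimes K(D)_x$, so $\Phi_x$ preserves the flag. Together with the pairwise distinctness of its eigenvalues at $x$, a standard linear-algebra argument then forces each $E_{x,i}$ to be spanned by eigenlines, so the assignment of weights to the points of $\pi^{-1}(x)$ is unambiguous and well-defined, closing the loop.
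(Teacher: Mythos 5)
Your proposal follows essentially the same route as the paper's proof: invoke the classical Beauville--Narasimhan--Ramanan correspondence for the underlying data $(E,\Phi)\leftrightarrow L$, then use the hypothesis $B\cap D=\emptyset$ (so $\pi$ is unramified over $D$ and $E_x=\bigoplus_i L_{\widetilde{x}_i}$) to match the parabolic data fiberwise, assigning the weight $\alpha_i(x)$ to the trivial flag at $\widetilde{x}_i$ under the fixed ordering. The only difference is that you spell out the verification the paper leaves as ``not hard to check,'' namely that a parabolic Higgs field with equal weights on source and target preserves the flag at each $x\in D$, so that, the eigenvalues there being distinct, each filtration step is a sum of eigenlines of $\Phi_x$; this is a useful elaboration rather than a departure from the paper's argument.
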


\begin{proof}
Let $E$ be a rank $n$ vector bundle over $X$ and $L$ the corresponding line bundle over $X_\eta$ under the classical BNR correspondence (Proposition 3.6 in \cite{BNR}). Given $x \in D$, the parabolic structure over $x$ involves $n$-many rational numbers $\alpha_i(x)$, $1 \leq i \leq n$ corresponding to the weights. On the other hand, the pre-image of $x$ contains $n$ points $\{\widetilde{x}_1,...,\widetilde{x}_n\}$. Since $L$ is a line bundle, we can equip $L|_{\widetilde{x}_i}$ with a parabolic structure given by
\begin{align*}
0 \subseteq L|_{\widetilde{x}_i}, \quad 0 \leq \alpha_i(x).
\end{align*}
This construction provides a parabolic structure $\widetilde{\alpha}$ on $L$ such that $\widetilde{\alpha}(\widetilde{x}_i)=\alpha_i(x)$ and it is not hard to check that it is also a one-to-one correspondence for the parabolic structures.
\end{proof}

\begin{rem}\label{314}
Note that if we only assume that $L$ is compatible with the parabolic structure $\alpha$, but we do not fix an order for the pre-image of $x \in D$, then the correspondence may not be bijective. For instance, assume that $D$ only contains one point $x$ and consider the special cases:
\begin{itemize}
\item[(a)] if $\alpha_i(x)$ are distinct rational numbers, then the correspondence is $n!$-to-one, which is the number of all possible orderings of the $n$ points in $\pi^{-1}(x)$;
\item[(b)] if $\alpha_i$ are all the same, then the correspondence is still one-to-one.
\end{itemize}
In the general case that we have $l$ distinct weights $\alpha_i(x), 1 \leq i \leq l$, and the weight $\alpha_i(x)$ appears $k_i$ times, $1 \leq i \leq l$, the correspondence is a ${n \choose k_1,\dots,k_l}$-to-one correspondence.
\end{rem}

\begin{rem}\label{3141}
In Theorem \ref{313}, we assume that the intersection of $B$ and $D$ is empty. With respect to this assumption, each point $x \in D$ has $n$ distinct pre-images $\{\widetilde{x}_1,...,\widetilde{x}_n\}$ in $X_\eta$. Note that we have $n$ fibers $L|_{\widetilde{x}_i}$ and $n$ numbers $\alpha_i(x)$. Therefore, we can give a weight to each fiber in this case. 

If the intersection of $B$ and $D$ is not empty, without loss of generality, let $\{\widetilde{x}_1,...,\widetilde{x}_m\} \in X_\eta$ be the pre-images (as a set, not as a scheme) of $x$, where $m <n$. Since $L$ is a line bundle on $X_\eta$, we have $m$ fibers $L|_{\widetilde{x}_i}$ of dimension one. In this case, we have to equip $m$ fibers (with trivial filtration) with $n$ numbers (weights), which is impossible in the approach of Theorem \ref{313}.
\end{rem}

The above correspondence can be easily extended to the case of $\mathbb{L}$-twisted parabolic Higgs bundles.
\begin{defn}
A pair $(E,\Phi)$ is an \emph{$\mathbb{L}$-twisted parabolic Higgs bundle} over $(X,D)$, if $E$ is a parabolic Higgs bundle and $\Phi \in H^0 (\text{End}(E)\otimes \mathbb{L})$ a holomorphic section preserving the filtration.
\end{defn}

\begin{cor}
Fix a parabolic structure $\alpha$ for a rank $n$ parabolic bundle over $X$ and a tuple of sections $\eta=(\eta_i)$, where $\eta_i$ is a section of $\mathbb{L}^i$ for $1 \leq i \leq n$. Assume that the surface $X_\eta$ is non-singular and the intersection of the branch points $B$ and the given divisor $D$ is empty. For a fixed order for the pre-image $\pi^{-1}(x)=\{\widetilde{x}_1,...,\widetilde{x}_n\}$ of each $x \in D$, then there is a bijective correspondence between isomorphism classes of strictly compatible $\mathbb{L}$-twisted parabolic line bundles $(L,\widetilde{\alpha})$ on $X_\eta$ and isomorphism classes of pairs $(E, \Phi)$, where $E$ is a parabolic bundle of rank $n$ with parabolic structure $\alpha$ and $\Phi:E \rightarrow E \otimes \mathbb{L}$ is an $\mathbb{L}$-twisted parabolic Higgs field with characteristic coefficients $\eta_i$.
\end{cor}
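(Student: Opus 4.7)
The plan is to follow the argument of Theorem \ref{313} essentially verbatim, with $\mathbb{L}$ playing the role of $K(D)$; nothing in the construction of the spectral cover or in the transfer of weights actually used the particular choice $K(D)$, so the extension is formal once the right invariance statements are checked.

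First, I would invoke the classical Beauville-Narasimhan-Ramanan correspondence in its $\mathbb{L}$-twisted form (Proposition 3.6 in \cite{BNR} applied with the line bundle $\mathbb{L}$). This gives a bijection between isomorphism classes of pairs $(E,\Phi)$ with $\Phi:E\to E\otimes \mathbb{L}$ having prescribed characteristic coefficients $\eta_i\in H^0(X,\mathbb{L}^i)$, and isomorphism classes of line bundles $L$ on the spectral curve $X_\eta\subset\text{tot}(\mathbb{L})$ cut out by $\lambda^n+\eta_1\lambda^{n-1}+\cdots+\eta_n=0$. Under this correspondence, $E=\pi_* L$ and $\Phi$ is recovered from multiplication by the tautological section $\lambda$ of $\pi^*\mathbb{L}$.

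Second, I would transfer the parabolic structure exactly as in Theorem \ref{313}. The hypothesis $B\cap D=\emptyset$ forces $\pi$ to be étale over a neighbourhood of each $x\in D$, so the preimage $\pi^{-1}(x)=\{\widetilde{x}_1,\ldots,\widetilde{x}_n\}$ consists of $n$ distinct points and étale descent yields a canonical decomposition $E_x=\bigoplus_{i=1}^n L_{\widetilde{x}_i}$ into one-dimensional summands. Given a strictly compatible parabolic line bundle $(L,\widetilde{\alpha})$ on $X_\eta$, assigning weight $\widetilde{\alpha}(\widetilde{x}_i)=\alpha_i(x)$ to the summand $L_{\widetilde{x}_i}$ and ordering summands by weight produces the weighted flag on $E_x$ prescribed by $\alpha$. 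Conversely, given $(E,\Phi)$ with parabolic structure $\alpha$, the decomposition of $E_x$ into eigenlines of the restriction of $\Phi$ under the étale decomposition puts each weight-carrying line in bijection with a preimage point, and the fixed ordering of $\pi^{-1}(x)$ then determines the strict compatibility.

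Third, I would verify that the Higgs field $\Phi$ so obtained is automatically parabolic. Because $\Phi$ acts on $E_x=\bigoplus L_{\widetilde{x}_i}$ diagonally with eigenvalue $\lambda(\widetilde{x}_i)$ on the $i$-th summand, each subspace in the resulting flag is $\Phi$-invariant, hence the filtration at $x$ is preserved; the condition in Definition \ref{205} is therefore trivially satisfied at every $x\in D$.

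The only real subtlety, and the reason a separate statement is warranted, is the same one flagged in Remark \ref{314}: the bijection is well-defined at the level of isomorphism classes only once an ordering of $\pi^{-1}(x)$ has been fixed, since different orderings produce permutations of the weights which need not yield isomorphic parabolic line bundles on $X_\eta$. The étaleness granted by $B\cap D=\emptyset$ is precisely what makes both directions of the correspondence clean, since it ensures the number of fibers matches the number of weights. Beyond this bookkeeping, no argument over and above that of Theorem \ref{313} is required, so the proof consists of checking that the spectral construction and the weight-transfer step go through unchanged when $K(D)$ is replaced by an arbitrary twisting line bundle $\mathbb{L}$.
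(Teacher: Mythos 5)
Your proposal is correct and follows essentially the same route as the paper: the paper proves the corollary by noting that the argument of Theorem \ref{313} (classical BNR applied to the spectral curve in $\mathrm{tot}(\mathbb{L})$, followed by transferring the weights $\alpha_i(x)$ to the $n$ distinct preimage points $\widetilde{x}_i$, which exist because $B\cap D=\emptyset$) goes through verbatim with $K(D)$ replaced by $\mathbb{L}$. Your additional checks (the fiberwise decomposition $E_x\cong\bigoplus_i L_{\widetilde{x}_i}$, the role of the fixed ordering as in Remark \ref{314}, and the parabolicity of $\Phi$ at the punctures) only flesh out details the paper leaves implicit.
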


\subsection{BNR Correspondence for $\mathcal{L}$-Twisted Higgs $V$-Bundles}
To construct a version of the correspondence for $\mathcal{L}$-twisted Higgs $V$-bundles, we first generalize the correspondence between Higgs $V$-bundles and parabolic Higgs bundles described in \S4 to the twisted case. Then, the BNR correspondence for the $\mathbb{L}$-twisted parabolic Higgs bundles directly provides a version for $\mathcal{L}$-twisted Higgs $V$-bundles.

In order to extend Theorem \ref{303} to the twisted case, we focus on the local chart $\mathbb{C} / \mathbb{Z}_m$. Consider $\mathcal{L}$ a line $V$-bundle over $M$ and $\mathbb{L}$ the corresponding parabolic line bundle over $X$. The local trivialization of $\mathcal{L}$ is $\mathbb{Z}_m$-equivariant with respect to the action
\begin{align*}
t(z;z_1)=(tz;t^{k}z_1), \quad 0 \leq k < \alpha.
\end{align*}
Let $s$ be a holomorphic section of $\mathcal{L}$. Note that $s$ is $\mathbb{Z}_m$-equivariant. In other words, it means that
\begin{align*}
(t \cdot s)(tz)=s(z).
\end{align*}
With respect to the above trivialization, the section $s$ can be written as
\begin{align*}
s(z)=z^{k}\hat{s}(z^m),
\end{align*}
where $\hat{s}$ is a holomorphic function of $\mathbb{L}$.

Let $(E,\Phi)$ be an $\mathcal{L}$-twisted Higgs $V$-bundle over $M$. The $\mathcal{L}$-twisted $V$-Higgs field $\Phi=(\phi_{ij})$ can be locally written as
\begin{align*}
\phi_{ij}=
\begin{cases}
z^{k_{i}-k_{j}} \hat{\phi}_{ij}(z^{m})(z^{k}\hat{s}(z^m)) & \text{ if } k_i \geq k_j\\
0 & \text{ if } k_i < k_j,
\end{cases}
\end{align*}
where the local trivialization is the same as the one considered in \S 4. Let $F$ be the corresponding parabolic bundle over $(X,D)$ under the correspondence of Theorem \ref{303}. The corresponding section for an $\mathbb{L}$-twisted parabolic Higgs bundle is $z^{k} \hat{\phi}_{ij}(z)\hat{s}(z^m)$. By gluing the local charts, the $\mathcal{L}$-twisted $V$-Higgs field $\Phi$ gives us an $\mathbb{L}$-twisted parabolic Higgs field $\widetilde{\Phi}$. This implies the correspondence in the twisted case. We therefore obtain the following:

\begin{prop}\label{3030}
There is a bijective correspondence between isomorphism classes of holomorphic semistable (resp. stable) $\mathcal{L}$-twisted Higgs $V$-bundles with good trivialization $\left( E,\Theta ,\Phi  \right)$ and isomorphism classes of semistable (resp. stable) $\mathbb{L}$-twisted parabolic Higgs bundles $\left( F,\tilde{\Theta },\tilde{\Phi } \right)$.
\end{prop}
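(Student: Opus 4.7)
The plan is to reduce to the already-established non-twisted correspondence (Theorem \ref{303}) by performing a purely local analysis of how twisting by $\mathcal{L}$ interacts with the $\mathbb{Z}_m$-equivariant local models at each orbifold point, and then to check that the local constructions globalize and preserve semistability.

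First, I would apply Theorem \ref{303} (and Proposition \ref{305}) to produce, from the underlying holomorphic $V$-bundle $E$ equipped with a good trivialization $\Theta$, a parabolic vector bundle $F$ over $(X,D)$ with parabolic structure determined by the isotropy weights of $E$. Likewise, applying the same underlying $V$-bundle to parabolic bundle dictionary to the line $V$-bundle $\mathcal{L}$ produces the parabolic line bundle $\mathbb{L}$. Thus the passage $E\leftrightarrow F$ and $\mathcal{L}\leftrightarrow \mathbb{L}$ is already known; the only new content is to match $\mathcal{L}$-twisted $V$-Higgs fields with $\mathbb{L}$-twisted parabolic Higgs fields.

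Next, I would work on an orbifold chart $[\mathbb{C}/\mathbb{Z}_m]$ around a marked point. Using the good trivialization, a section $s$ of $\mathcal{L}$ is $\mathbb{Z}_m$-equivariant with local expansion $s(z)=z^{k}\hat{s}(z^{m})$, while a component $\phi_{ij}$ of the $V$-Higgs field $\Phi$ valued in $\mathcal{L}$ will acquire an extra factor $z^{k}\hat{s}(z^{m})$ relative to the formula (4). Composing with the local form $z^{k_i-k_j}\hat\phi_{ij}(z^m)$ for $k_i\geq k_j$ and imposing the nullity condition for $k_i<k_j$, one checks that after the change of variable $w=z^m$ one recovers exactly the local expression of an $\mathbb{L}$-twisted parabolic endomorphism $\widetilde{\Phi}$ of $F$ preserving the parabolic filtration. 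The inverse procedure simply reads off the $\mathbb{Z}_m$-equivariant extension of a given local section of $\mathrm{End}(F)\otimes\mathbb{L}$, which is uniquely determined by the weights of the isotropy action on $E$ and on $\mathcal{L}$.

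Globalization is then formal: the maps $\Phi\mapsto\widetilde{\Phi}$ and $\widetilde{\Phi}\mapsto\Phi$ are defined chart-by-chart in a manner compatible with transition functions (since both are forced to match the non-twisted case away from the orbifold locus, and the local formulas transform correctly under the $\mathbb{Z}_m$-action), giving mutually inverse bijections of isomorphism classes. For the (semi)stability assertion, I would invoke the fact that $\deg(E)=\mathrm{par}\deg(F)$ and, more generally, that the correspondence $E\leftrightarrow F$ identifies holomorphic sub-$V$-bundles with parabolic subbundles preserving par-degree; since $\Phi$-invariance translates into $\widetilde{\Phi}$-invariance under the dictionary, slope inequalities on one side match slope inequalities on the other, yielding stability and semistability in both directions.

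The main obstacle I anticipate is purely bookkeeping at the marked points: one must verify that the extra $z^k$ factor coming from the local form of a section of $\mathcal{L}$ does not create either a spurious pole or a failure of the filtration-preserving condition for $\widetilde{\Phi}$. In particular, one needs to confirm that the recipe still forces $\phi_{ij}=0$ when $k_i<k_j$ after twisting (so that $\widetilde{\Phi}$ indeed defines a parabolic, and not merely meromorphic, morphism), and that the cut-offs in the expansion of $\hat{s}$ match the expected shift in weights on $\mathbb{L}$. Once this local verification is in place, the rest of the argument is a direct transcription of the proof of Theorem \ref{303}.
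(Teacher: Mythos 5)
Your proposal matches the paper's own argument: the paper likewise reduces to Theorem \ref{303} by working on the local chart $[\mathbb{C}/\mathbb{Z}_m]$, writing a $\mathbb{Z}_m$-equivariant section of $\mathcal{L}$ as $z^{k}\hat{s}(z^{m})$, multiplying the local form of the $V$-Higgs field by this factor so that $\phi_{ij}=z^{k_i-k_j}\hat{\phi}_{ij}(z^m)\,z^{k}\hat{s}(z^m)$ for $k_i\ge k_j$ (and $0$ otherwise), identifying the result as the local expression of an $\mathbb{L}$-twisted parabolic Higgs field, and gluing the charts, with (semi)stability carried over exactly as you indicate via $\deg(E)=par\deg(F)$ and the matching of invariant subobjects. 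Your local verification and globalization steps are precisely the content of the paper's proof, so the approach is essentially identical.
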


Let $\eta=(\eta_i)$ be a set of sections of $\mathcal{L}^i$ for $1 \leq i \leq n$. We construct the spectral covering $\pi:M_\eta \rightarrow M$ of a $V$-surface as follows. For $X$ the underlying surface of $M$ and $D=\{x_1,...,x_s\}$, recall that the atlas of local charts of the $V$-surface $M$ over $(X,D)$ is given (see \S 4) by
\begin{align*}
& \phi_i: U_i \rightarrow D^k / \sigma_i, & 1 \leq i \leq s;\\
& \phi_p: U_p \rightarrow D^k, & p \in M \backslash \{x_1,...,x_s\}.
\end{align*}
The sections $\eta=(\eta_i)$ can be considered as a $\Gamma$-equivariant section of $X$. Thus the spectral covering $\pi: X_\eta \rightarrow X$ over $X$ is given as usual. The $V$-surface $M_\eta$ is now given as the pullback of the following diagram:
\begin{center}
\begin{tikzcd}	
	 M_\eta \arrow[d] \arrow[r] & X_\eta  \arrow[d]\\
	
	M \arrow[r] & X
\end{tikzcd}	
\end{center}

In particular, an atlas of $M_\eta$ is described by
\begin{align*}
& \phi_y: U_y \rightarrow D^k / \sigma_i, & y \in \pi_u^{-1}(D);\\
& \phi_p: U_p \rightarrow D^k, & p \in X_\eta \backslash \pi^{-1}(D).
\end{align*}
Such an atlas defines the $V$-surface $M_\eta$ over $(X_\eta, \pi^{-1}(D))$, and the underlying space of $M_\eta$ is $X_\eta$. We have a natural covering of the $V$-surface $\pi : M_{\eta} \rightarrow M$, which is the spectral covering. Under the correspondence between Higgs $V$-bundles and parabolic Higgs bundles, we have the following corollary.

\begin{cor}\label{315}
Assume that the underlying surface of $M_\eta$ is nonsingular and the intersection of the set of branch points $B$ and the divisor $D$ is empty. Then there is a bijective correspondence between isomorphism classes of strictly compatible line $V$-bundles $L$ on $M_\eta$ and isomorphism classes of pairs $(E, \Phi)$, where $E$ is a $V$-bundle of rank $n$ over $M$ and $\Phi:E \rightarrow E \otimes \mathcal{L}$ a homomorphism with characteristic coefficients $\eta_i$. For $\rho : M_\eta \rightarrow M$, it is $\rho_*(L)=E$.
\end{cor}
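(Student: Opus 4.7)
The plan is to chain the correspondences already set up in the paper, reducing the statement to Proposition \ref{3030} and the twisted form of Theorem \ref{313}. Given an $\mathcal{L}$-twisted Higgs $V$-bundle $(E,\Phi)$ on $M$, Proposition \ref{3030} produces an $\mathbb{L}$-twisted parabolic Higgs bundle $(F,\widetilde{\Phi})$ on $(X,D)$ whose parabolic structure $\alpha$ is read off from the isotropy representations of $E$ at each $x_i\in D$. The assumptions that the underlying surface of $M_\eta$ is nonsingular and $B\cap D=\emptyset$ translate directly into the hypotheses of the twisted version of Theorem \ref{313}: the ordinary spectral curve $X_\eta\to X$ is smooth, and each $x\in D$ has $n$ distinct preimages in $X_\eta$. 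Applying the twisted BNR correspondence produces a strictly compatible $\mathbb{L}$-twisted parabolic line bundle $(\ell,\widetilde{\alpha})$ on $(X_\eta,\pi^{-1}(D))$ satisfying $\pi_*\ell=F$ and carrying the characteristic coefficients $\eta_i$.

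Next, I would lift $(\ell,\widetilde{\alpha})$ back into the $V$-category via Proposition \ref{3030} applied over the base pair $(X_\eta,\pi^{-1}(D))$ with orbifold $M_\eta$. Here I use the pullback description of $M_\eta$: since $B\cap D=\emptyset$, each point in $\pi^{-1}(D)$ has a chart of the form $\mathbb{C}/\mathbb{Z}_m$ coming from the corresponding chart of $M$, so the cyclic isotropy data on $M_\eta$ is exactly what is needed to interpret parabolic weights $\widetilde{\alpha}(\widetilde{x}_i)$ (which are fractions with denominator $m$) as isotropy weights of a line $V$-bundle $L$ on $M_\eta$. Strict compatibility ensures that the assignment of weights to the ordered preimages $\{\widetilde{x}_1,\dots,\widetilde{x}_n\}$ matches the local character of $L$ at each orbifold point, making the line $V$-bundle well-defined and unique. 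The inverse direction is obtained by reversing each of the three steps, each of which is a bijection.

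Finally, I would verify the pushforward identity $\rho_*(L)=E$. The classical BNR correspondence applied to $(F,\widetilde{\Phi})$ gives $\pi_*\ell=F$ compatibly with parabolic structures, and the correspondence of Proposition \ref{3030} is essentially given by passing to invariants under the local cyclic actions, which commutes with the pushforward along $\rho$ because $\rho$ is itself obtained by pullback from $\pi$. Thus the equality $\rho_*L=E$ descends from $\pi_*\ell=F$. The main obstacle I expect is the bookkeeping at the ramification-free punctures: one must check that the ordering of $\pi^{-1}(x)$, the weight assignment $\widetilde{\alpha}(\widetilde{x}_i)=\alpha_i(x)$, and the isotropy character of $L$ at $\widetilde{x}_i$ are all mutually consistent, and that this consistency is preserved under the equivalence. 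Once this local matching at the punctures is checked, gluing is automatic since away from $\pi^{-1}(D)$ the correspondence is just the classical BNR correspondence applied to the étale locus.
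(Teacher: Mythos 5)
Your proposal is correct and follows essentially the same route as the paper: the paper's proof simply composes the vertical correspondence of Proposition \ref{3030} (applied both over $(X,D)$ with $M$ and over $(X_\eta,\pi^{-1}(D))$ with $M_\eta$) with the horizontal twisted BNR correspondence of Theorem \ref{313}, exactly as you chain them. Your additional checks on the weight/isotropy bookkeeping at the punctures and on $\rho_*L=E$ only make explicit what the paper leaves implicit in its diagram.
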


\begin{proof}
The proof follows from Proposition \ref{3030} (also Theorem \ref{303}) and Theorem \ref{313}. The correspondence is obtained from the following diagram:
\begin{center}
\begin{tikzcd}	
	\text{line $V$-bundles over $M_\eta$ } \arrow[r] \arrow[d, "\text{ Proposition \ref{3030}}"] & \text{ $\mathcal{L}$-twisted Higgs $V$-bundles over $M$} \arrow[d,"\text{ Proposition \ref{3030}}"]\\

	 \text{parabolic line bundles over $X_\eta$ } \arrow[r,"\text{ Theorem \ref{313}}"] & \text{ parabolic $\mathbb{L}$-twisted Higgs bundles over $X$}.
\end{tikzcd}	
\end{center}
\end{proof}

For our purposes, we are particularly interested in the following two cases of $\mathcal{L}$-twisted Higgs $V$-bundles:
\begin{enumerate}
\item[(1)] $\mathcal{L}=K(D)$ with trivial monodromy around each puncture $x \in D$;
\item[(2)] $\mathcal{L}=K$ with monodromy $\frac{1}{2}$ around each puncture $x \in D$.
\end{enumerate}
In the first case, $\frac{dz}{z}$ is a local section of $K(D)$, which is $\mathbb{Z}_2$-equivariant. It is easy to check that the corresponding local section for the chart of $X$ is $\frac{dw}{w}$, where $w=z^2$. Therefore we use the same notation $K(D)$ for the corresponding parabolic line bundle over $(X,D)$, of which the parabolic structure is trivial over each point $x \in D$. In this case, the correspondence studied in Proposition \ref{3030} and Theorem \ref{303} is exactly between $K(D)$-twisted Higgs $V$-bundles and parabolic Higgs bundles.\\
In the second case, $dz$ is a local section of $K$. Under the same calculations as in \S 4, the section $dz$ is also $\mathbb{Z}_2$-equivariant. Let $w=z^2$; then
\begin{align*}
dz=z\frac{dw}{w}.
\end{align*}
The variable can be considered as the local coordinate of the corresponding chart of $X$. More precisely, we consider the local section $\phi f(z) dz$, where $\phi$ is some holomorphic section of a $V$-bundle $\text{End}(E)$. We have
\begin{align*}
\phi f(z) dz = \phi z\frac{dw}{w}.
\end{align*}
This formula implies that the corresponding parabolic Higgs field not only preserves the filtration, but also maps the filtration $F_j$ strictly to $F_{j+1} \otimes K(D)$. Therefore the corresponding twisted parabolic Higgs bundle in this case is a strongly parabolic Higgs bundle. Details of this correspondence can be found in \cite{BiMaWo}.

The above discussion gives another interpretation of the correspondence between parabolic Higgs bundles and $\Gamma$-Higgs bundles:

\begin{prop}{\cite[\S 3]{BiMaWo}}
\begin{enumerate}
\item[(1)] Let $\mathcal{L}=K(D)$ with trivial monodromy around each puncture $x \in D$. There is a one-to-one correspondence between $\mathcal{L}$-twisted Higgs $V$-bundles and parabolic Higgs bundles.
\item[(2)] Let $\mathcal{L}=K$ with monodromy $\frac{1}{2}$ around each puncture $x \in D$. There is a one-to-one correspondence between $\mathcal{L}$-twisted Higgs $V$-bundles and strongly parabolic Higgs bundles.
\end{enumerate}
\end{prop}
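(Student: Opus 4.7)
The plan is to derive both statements as direct applications of Proposition \ref{3030}, after identifying the parabolic line bundle $\mathbb{L}$ that corresponds to each choice of line $V$-bundle $\mathcal{L}$ and tracking how the extra monodromy factor is absorbed into the filtration shift of the Higgs field. In other words, I would treat this proposition as the special cases $\mathcal{L}=K(D)$ and $\mathcal{L}=K$ of the general $\mathcal{L}$-twisted correspondence of \S 5.2, and then check the compatibility with the parabolic (resp.\ strongly parabolic) condition on local charts.

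For part (1), I would work on a local chart $\mathbb{C}/\mathbb{Z}_{m}$ with coordinate $z$ and underlying chart coordinate $w=z^{m}$. The local generator $\frac{dz}{z}$ of $K(D)$ is $\mathbb{Z}_{m}$-invariant (trivial monodromy) and pushes forward to $\frac{1}{m}\frac{dw}{w}$, the local generator of $K(D)$ on $(X,D)$; hence the parabolic line bundle $\mathbb{L}$ associated with $\mathcal{L}=K(D)$ under Theorem \ref{303} is, as a parabolic object, $K(D)$ itself with trivial parabolic weights. Plugging this into Proposition \ref{3030} then identifies $\mathcal{L}$-twisted Higgs $V$-bundles on $M$ with $K(D)$-twisted parabolic Higgs bundles on $(X,D)$, i.e.\ with parabolic Higgs bundles in the usual sense of Definition 2.2.2.

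For part (2), the key computation is that with $\mathcal{L}=K$ carrying monodromy $\tfrac{1}{2}$, the local generator $dz$ of $K$ satisfies $dz = z \cdot \frac{dw}{w}$ in the underlying chart. Thus a local section of $\operatorname{End}(E)\otimes K$ written as $\phi(z)\,dz$ becomes $z\,\phi(z)\,\frac{dw}{w}$ after descent, and the extra factor $z$ has weight $1$ under the $\mathbb{Z}_{2}$-action. In the language of the decomposition $E(\mathfrak{g}^{\mathbb{C}})=\bigoplus U_{k}$ used for the $V$-Higgs field in \S 4.1, this forces the matrix entries $\phi_{ij}$ with $k_{i}=k_{j}$ to vanish on the divisor, so the induced parabolic Higgs field $\widetilde{\Phi}$ actually maps $F_{j}$ into $F_{j+1}\otimes K(D)$ rather than only $F_{j}\otimes K(D)$. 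By Definition 2.1.5 (and the remark after Definition 2.2.1), this is precisely the strongly parabolic condition, so Proposition \ref{3030} restricts to a bijection with strongly parabolic Higgs bundles.

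The only real obstacle is the index bookkeeping for the filtration shift in part (2): one must verify that the factor $z$ coming from $dz = z\frac{dw}{w}$ shifts every eigenspace weight up by exactly one, uniformly across the matrix entries, and that this matches Definition 2.1.5 on the nose (including the strict inequality $\alpha_{i}(x)\ge \alpha'_{j}(x)\Rightarrow f(E_{x,i})\subset E'_{x,j+1}$). Once the local $\mathbb{Z}_{2}$-equivariance of $dz$ is tracked explicitly, both correspondences are well-defined, mutually inverse, and preserve (semi)stability by the same argument as in Proposition \ref{305}; since Proposition \ref{3030} is itself bijective, the induced maps on the strongly parabolic (resp.\ parabolic) subcategory are bijective as well, and the proposition follows.
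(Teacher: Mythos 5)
Your proposal is correct and follows essentially the same route as the paper: identify the parabolic line bundle corresponding to $\mathcal{L}$ on a local chart ($\frac{dz}{z}$ descending to a generator of $K(D)$ with trivial weight in case (1), and $dz=z\frac{dw}{w}$ in case (2)), then feed this into Proposition \ref{3030}, observing that the extra factor of $z$ in case (2) forces the induced field to map $F_j$ into $F_{j+1}\otimes K(D)$, i.e.\ to be strongly parabolic. The paper's own argument is exactly this local computation combined with the twisted correspondence, so no further comparison is needed.
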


\subsection{Hitchin Fibration}
Analogously to the classical BNR correspondence, Theorem \ref{313} and Corollary \ref{315} provide a way to describe the fiber of the parabolic Hitchin map ($K(D)$-twisted). Let $\text{tot}(K(D))$ be the total space of the line bundle $K(D)$. It can be written as
\[\text{tot}(K(D))=\underline{\mathrm{Spec}}\left(\mathrm{Sym}^{\bullet}((K(D))^{-1})\right).\]
We have the canonical projection $\pi:\text{tot}(K(D))\to X$. Let $\lambda\in H^0(\pi^* K(D))$ be the tautological section. The characteristic polynomial of the Higgs field
\[\det(\lambda\cdot \mathrm{id}-\pi^* \Phi)=\lambda^n+\eta_1 \lambda^{n-1}+\ldots +\eta_n\in H^0(\text{tot}(K(D)),\pi^*(K (D)^n))\]
defines the sections $\eta_i \in H^0(X,K(D)^{i})$. The parabolic Hitchin map
\begin{align*}
h:\mathcal{M}_{par}(X,D,\alpha,n)\to \bigoplus_{i=1}^n H^0(X,(K(D))^i)
\end{align*}
sends $(E,\Phi)$ to $(\eta_1,\ldots,\eta_n)$, where $(\eta_1,\ldots,\eta_n)$ is the coefficient of the characteristic polynomial of $\Phi$. If $\Phi$ is strongly parabolic, then the eigenvalues of $\Phi$ will vanish at the divisor. Therefore, $\eta_i\in H^0(X,K^iD^i\otimes \mathcal{O}_X(D)^{-1})=H^0(X,K^iD^{i-1})$, and the Hitchin fibration is the map $$h:\mathcal{M}^{s}_{par}(X,D,\alpha,n) \to H=\bigoplus_{i=1}^n H^0(X,(K(D))^i\otimes \mathcal{O}_{X}(-D))$$ sending $E$ to $\eta_i$, where $\mathcal{M}^{s}_{par}(X,D,\alpha,n)$ denotes the moduli space of strongly parabolic Higgs bundles with parabolic structure $\alpha$. More generally, we have
\begin{align*}
\mathcal{M}_{par}(X,D,n) \rightarrow \bigoplus\limits_{i=1}^n H^0(X,K(D)^i).
\end{align*}
The parabolic Hitchin map $\mathcal{M}_{par}(X,D) \rightarrow \bigoplus\limits_{i=1}^n H^0(X,K(D)^i)$ does not interact with the parabolic structure. More precisely, the parabolic Hitchin fibration of a regular point $\eta \in \bigoplus\limits^n_{i=1} H^0(X,K(D)^i)$ may contain two elements $(E,\alpha,\Phi)$ and $(E,\alpha',\Phi)$ for distinct parabolic structures $\alpha \neq \alpha'$.

Considering weights as rational numbers with denominator $m$, for a fixed positive integer $m\ge 2$, we want to describe the fiber of the parabolic Hitchin map $\mathcal{M}_{par}(X,D,n,m) \rightarrow \bigoplus\limits^n_{i=1} H^0(X,K(D)^i)$. Note that we can construct the $V$-surface $M$ over $(X,D)$ such that the action around each $x \in D$ is the cyclic group $Z_m$ of order $m$. Then any point in $\mathcal{M}_{par}(X,D,n,m)$ corresponds to a $V$-bundle over $M$. This correspondence is bijective by Theorem \ref{303}, thus providing the following:
\begin{prop}\label{316}
The moduli space $\mathcal{M}_{par}(X,D,n,m)$ is isomorphic to the moduli space of rank $n$ $V$-bundles over $M$, where $M$ is the $V$-surface uniquely determined by the data $(X,D,m)$.
\end{prop}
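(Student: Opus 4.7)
The plan is to exhibit the isomorphism as a direct consequence of the correspondence recalled in \S 4, applied in families. First I would unpack the construction: given the data $(X,D,m)$, the local model of the $V$-surface $M$ around each $x_i\in D$ is the quotient chart $[\mathbb{C}/\mathbb{Z}_m]$, and on all other points $M$ coincides with $X$. This uniquely determines $M$ and sets the stage for invoking Theorem \ref{303} and Proposition \ref{305}.

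Next I would apply Theorem \ref{303} (or its stable/semistable refinement Proposition \ref{305}) pointwise: every rank $n$ Higgs $V$-bundle $(E,\Theta,\Phi)$ on $M$ has local trivialization with monodromy of the form $\mathrm{diag}(\zeta^{k_1},\dots,\zeta^{k_n})$ where $\zeta$ is a primitive $m$-th root of unity and $0\le k_j<m$, and the corresponding parabolic Higgs bundle on $(X,D)$ has weights $k_j/m$ at each puncture, i.e.\ rational with denominator $m$. Conversely, any parabolic Higgs bundle $(F,\tilde\Theta,\tilde\Phi)$ whose weights lie in $\tfrac{1}{m}\mathbb{Z}\cap[0,1)$ has local monodromy data matching the $\mathbb{Z}_m$-action on $M$, so the correspondence produces a Higgs $V$-bundle on $M$. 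This shows the two sides are in bijection as sets of isomorphism classes. The equality $\deg E = \mathrm{par}\deg F$ (recalled before Proposition \ref{305}) ensures the (semi)stability conditions match, so the bijection restricts to poly-stable objects and is compatible with $S$-equivalence.

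The step that requires care, and the main obstacle, is upgrading the pointwise bijection to an isomorphism of moduli spaces rather than merely a bijection on closed points. For this I would observe that the correspondence in Theorem \ref{303} is natural in families: given a family $\mathcal{E}\to M\times T$ of Higgs $V$-bundles parametrized by a scheme $T$, the locally described equivariant trivializations glue to a family of parabolic Higgs bundles on $(X,D)\times T$ of denominator $m$, and conversely. This naturality means both functors representing the moduli problems are canonically identified, and by the universal property of moduli spaces the identification descends to an isomorphism $\mathcal{M}_{par}(X,D,n,m)\cong \mathcal{M}_V(M,n)$, where the right-hand side denotes the moduli space of rank $n$ semistable Higgs $V$-bundles on $M$ (whose existence is noted in \S 4 via \cite{Simp2010,Sun2020}).

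Finally, I would remark that no additional twist appears here because the canonical class of Higgs fields on $(X,D)$ with values in $K(D)$ corresponds precisely to $V$-Higgs fields on $M$ with values in the $V$-bundle pulled back from $K(D)$ with trivial monodromy, as already discussed at the end of \S 5.2. Thus the Hitchin base is preserved and the asserted isomorphism of moduli spaces is identified under the natural Hitchin maps, giving the proposition.
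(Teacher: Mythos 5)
Your argument is essentially the paper's: the proposition is deduced directly from the correspondence between Higgs $V$-bundles and parabolic Higgs bundles (Theorem \ref{303}, refined in Proposition \ref{305}), applied to weights with denominator $m$ and the $V$-surface determined by $(X,D,m)$, exactly as in the paragraph preceding the statement. The only difference is your additional discussion of naturality in families to upgrade the bijection on isomorphism classes to an isomorphism of moduli spaces; the paper explicitly restricts itself to the point-level correspondence, so this is a reasonable (if only sketched) strengthening rather than a different route.
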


The Beauville-Narasimhan-Ramanan correspondence for $V$-surfaces provides now the following description of the fibers:
\begin{prop}\label{3170}
The fiber of the parabolic Hitchin map $$\mathcal{M}_{par}(X,D,n,m) \rightarrow \bigoplus\limits^n_{i=1} H^0(X,K(D)^i)$$ over a regular point $\eta \in \bigoplus\limits^n_{i=1} H^0(X,K(D)^i)$ is isomorphic to the $V$-Picard group of the spectral covering $M_\eta$ of $M$, where $M$ is uniquely determined by the data $(X,D,m)$.
\end{prop}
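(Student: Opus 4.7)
The plan is to chain together the three correspondences that have already been established in the paper and translate the parabolic Hitchin fiber into a moduli problem on the spectral $V$-surface $M_\eta$. First, by Proposition \ref{316}, the moduli space $\mathcal{M}_{par}(X,D,n,m)$ is identified with the moduli space of rank $n$ (Higgs) $V$-bundles over the $V$-surface $M$ associated to the data $(X,D,m)$. Under this identification the parabolic Hitchin map corresponds to the natural $V$-Hitchin map, which sends a pair $(E,\Phi)$, with $\Phi:E\to E\otimes K(D)$ a $V$-Higgs field (the twisting line $V$-bundle $K(D)$ has trivial monodromy along the orbifold points, as recorded in case (1) of the two distinguished cases in \S 5.2), to the coefficients $\eta_i$ of the characteristic polynomial of $\Phi$; I would verify this compatibility by unpacking the local form of $\Phi$ in the charts $\mathbb{C}/\mathbb{Z}_m$ from \S 4.

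Next I would fix a regular $\eta\in\bigoplus_{i=1}^n H^0(X,K(D)^i)$, for which $X_\eta$ is smooth and the branch locus $B$ is disjoint from $D$, so that the pulled-back $V$-surface $M_\eta$ constructed in \S 5.2 has smooth underlying surface and satisfies the hypotheses of Corollary \ref{315}. That corollary then supplies a bijection between isomorphism classes of line $V$-bundles $L$ on $M_\eta$ and isomorphism classes of pairs $(E,\Phi)$ with $E=\rho_*L$ a rank $n$ $V$-bundle over $M$ and $\Phi:E\to E\otimes K(D)$ a $K(D)$-twisted morphism whose characteristic coefficients are precisely $\eta_i$. Since, after fixing $\eta$, the latter set is exactly the fiber of the $V$-Hitchin map over $\eta$ (equivalently, the fiber of the parabolic Hitchin map under the translation from the first step), this gives a bijection of the Hitchin fiber with the set of isomorphism classes of line $V$-bundles on $M_\eta$, which is $\text{Pic}_V(M_\eta)$ by Definition \ref{307}.

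The point where I would need to be careful is the subtlety flagged in Remark \ref{314}: the word \emph{strictly} compatible in Corollary \ref{315} refers to having prescribed the parabolic structure $\alpha$ together with a chosen ordering of the fibers over each $x\in D$. In the present statement, however, the parabolic structure on $E$ is \emph{not} fixed a priori — the moduli $\mathcal{M}_{par}(X,D,n,m)$ only constrains the denominator of the weights. Passing to the $V$-bundle picture removes this issue, since the parabolic structure on the push-forward $\rho_*L$ is tautologically determined by the isotropy representation of $L$ at the orbifold points of $M_\eta$: no external choice of $\alpha$ or of an ordering of $\pi^{-1}(x)$ is imposed, and conversely every line $V$-bundle on $M_\eta$ produces some $V$-bundle $\rho_*L$ on $M$. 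Thus in the $V$-bundle reformulation \emph{every} line $V$-bundle on $M_\eta$ occurs exactly once, rather than just those compatible with a pre-chosen $\alpha$, and this is precisely the mechanism by which the whole group $\text{Pic}_V(M_\eta)$ — not just a $\text{Pic}_V$-torsor over a fixed parabolic type — parametrizes the fiber.

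The main obstacle I anticipate is promoting the set-theoretic bijection supplied by Corollary \ref{315} to the stated isomorphism with the $V$-Picard group; this requires showing that the BNR construction $L\mapsto(\rho_*L,\Phi_L)$ is algebraic in families and identifies the Hitchin fiber with the relative Picard scheme of the spectral $V$-curve. For this I would argue as in the classical BNR picture: over the open locus of regular $\eta$ the spectral $V$-surface $M_\eta$ is smooth, $\rho$ is finite and flat of degree $n$, and the cokernel construction $L\mapsto\rho_*L$ with multiplication-by-$\lambda$ producing $\Phi_L$ is functorial, giving a morphism from $\text{Pic}_V(M_\eta)$ to the fiber whose inverse is the eigensheaf construction $E\mapsto\ker(\lambda\cdot\mathrm{id}-\pi^*\Phi)$, globalized over the orbifold atlas exactly as in the proof of Corollary \ref{315}. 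Once this algebraic refinement is in place, the identification of Hitchin fiber with $\text{Pic}_V(M_\eta)$ follows.
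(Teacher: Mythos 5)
Your proposal is correct and follows essentially the same route as the paper, which deduces Proposition \ref{3170} directly from Proposition \ref{316} (identifying $\mathcal{M}_{par}(X,D,n,m)$ with Higgs $V$-bundles over $M$) combined with the BNR correspondence for $V$-surfaces in Corollary \ref{315}. Your extra care about the ``strictly compatible'' subtlety of Remark \ref{314} and about upgrading the bijection to an algebraic identification only fills in details the paper leaves implicit.
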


Now let us consider a special case: $n=2$. For the moduli space of $\text{SL}(2,\mathbb{R})$-Higgs bundles (resp. $\text{GL}(2,\mathbb{R})$-Higgs bundles), N. Hitchin \cite{Hit} proved that the fiber of the Hitchin fibration of a regular point $\eta \in H^0(X,K^2)$ with a given determinant is biholomorphic to the Prym variety of the spectral covering $X_\eta \rightarrow X$. The Prym variety $\text{Prym}(X_\eta,X)$ is defined as
\begin{align*}
\text{Prym}(X_\eta,X)=\{L \in \text{Jac}(X_\eta)| \tau^*L =L^{-1}\},
\end{align*}
where $\tau : X_\eta \rightarrow X_\eta$ is the involution of the double covering $X_\eta \rightarrow X$.

On the moduli space of parabolic $\text{SL}(2,\mathbb{R})$-Higgs bundles $\mathcal{M}_{par}(\text{SL}(2,\mathbb{R}),X,D,n)$ (resp. $\text{GL}(2,\mathbb{R})$-Higgs bundles $\mathcal{M}_{par}(\text{GL}(2,\mathbb{R}),X,D,n)$), the parabolic Hitchin fibration of a regular point $\eta$ in the Hitchin base $H^0(X,K(D)^2)$ is exactly the Prym variety
\begin{align*}
\text{Prym}(M_\eta,M)=\{L \in \text{Pic}_V(M_\eta)| \tau^*L =L^{-1}\},
\end{align*}
of the spectral covering $M_\eta \rightarrow M$, where $M$ and $M_\eta$ are the corresponding $V$-surfaces of $X$ and $X_\eta$, and $\tau:M_\eta \rightarrow M_\eta$ is the involution of $M_\eta$.

Recall that we have the following short exact sequences for the $V$-Picard groups $\text{Pic}_V(M)$ and $\text{Pic}_V(M_\eta)$
\begin{align*}
    0 \rightarrow \text{Pic}(X) \rightarrow \text{Pic}_V(M) \rightarrow \bigoplus_{i=1}^{s} \mathbb{Z}_2 \rightarrow 0,\\
    0 \rightarrow \text{Pic}(X_\eta) \rightarrow \text{Pic}_V(M_\eta) \rightarrow \bigoplus_{i=1}^{2s} \mathbb{Z}_2 \rightarrow 0.
\end{align*}
These exact sequences together with the $\Gamma$-equivariant condition give us the following short exact sequence for $\text{Prym}_V(M_\eta,M)$:
\begin{align*}
0 \rightarrow \text{Prym}(X_\eta,X) \rightarrow \text{Prym}_V(M_\eta,M) \rightarrow \bigoplus_{i=1}^{s} \mathbb{Z}_2 \rightarrow 0.
\end{align*}

The results of this section imply the following:
\begin{prop}\label{317}
The fiber of the parabolic Hitchin map
\begin{align*}
\mathcal{M}_{par}(X,D,2,2) \rightarrow H^0(X,K(D)^2)
\end{align*}
is a finite number of copies of the Prym variety $\text{Prym}(M_\eta,M)$. This number of copies only depends on the parabolic structure $\alpha$.
\end{prop}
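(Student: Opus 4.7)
The plan is to combine Proposition \ref{3170}, which identifies the regular Hitchin fiber with (a subset of) the $V$-Picard group $\text{Pic}_V(M_\eta)$, with the standard norm-map decomposition that cuts $\text{Pic}_V(M_\eta)$ down to torsors over the Prym variety; the finitely many copies will then arise from the discrete topological invariants (degree and isotropy $\mathbb{Z}_2$-data) imposed on $L$ by strict compatibility with $\alpha$.

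First, for a regular $\eta\in H^0(X,K(D)^2)$ the double cover $M_\eta\to M$ is smooth with involution $\tau$, and Proposition \ref{3170} together with Corollary \ref{315} identifies the fiber with the subset of $\text{Pic}_V(M_\eta)$ consisting of line $V$-bundles $L$ that are strictly compatible with the prescribed parabolic structure $\alpha$ on $E=\pi_*L$. Under BNR, invariants of $E$ translate into invariants of $L$: the determinant $\det E$ is equivalent (up to the fixed twist $\det\pi_*\mathcal{O}_{M_\eta}$) to the norm $\text{Nm}(L)=L\otimes\tau^*L\in\text{Pic}_V(M)$. Since the Hitchin base under consideration is $H^0(X,K(D)^2)$, fixing $\eta$ together with the determinant data of $E$ pins $\text{Nm}(L)$ down to a prescribed class in $\text{Pic}_V(M)$, so the admissible $L$'s lie in the preimage $\text{Nm}^{-1}(\text{fixed})$, which is a torsor over $\ker(\text{Nm})$.

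Second, by definition $\ker(\text{Nm})=\{L : \tau^*L\simeq L^{-1}\}=\text{Prym}_V(M_\eta,M)$, whose identity component is the Prym abelian variety and whose component group fits into the exact sequence
\[ 0\to\text{Prym}(X_\eta,X)\to\text{Prym}_V(M_\eta,M)\to\bigoplus_{i=1}^s\mathbb{Z}_2\to 0 \]
recalled just before the proposition. Strict compatibility with $\alpha$ is a purely discrete constraint on $L$: it prescribes the underlying integer degree and the isotropy $\mathbb{Z}_2$-characters of $L$ at each of the $2s$ points of $\pi^{-1}(D)$, via the chosen ordering of Definition \ref{312}. By Corollary \ref{308}, the topological types of line $V$-bundles on $M_\eta$ are parametrised by $\mathbb{Z}\oplus\bigoplus_{i=1}^{2s}\mathbb{Z}_2$, so the $\alpha$-compatible $L$'s form a finite, non-empty union of connected components of $\text{Pic}_V(M_\eta)$. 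Intersecting with $\text{Nm}^{-1}(\text{fixed})$ produces a finite disjoint union of translates of $\text{Prym}(M_\eta,M)$, whose number depends only on the discrete data of $\alpha$ and is manifestly independent of the chosen regular $\eta$.

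The main obstacle will be the combinatorial bookkeeping that matches the weight pair $(\alpha_1(x),\alpha_2(x))$ at each $x\in D$ with the two isotropy characters of $L$ at $\pi^{-1}(x)=\{\widetilde{x}_1,\widetilde{x}_2\}$, and reads off through the $V$-Picard classification of Corollary \ref{308} exactly which components of $\text{Pic}_V(M_\eta)$ survive both strict $\alpha$-compatibility and the prescribed norm. Once this dictionary is tabulated, finiteness and the $\eta$-independence of the count are immediate, and the proposition follows.
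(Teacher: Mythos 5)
Your proposal is correct and follows essentially the same route as the paper, which obtains Proposition \ref{317} directly from the identification of the regular fiber via Proposition \ref{3170}/Corollary \ref{315}, the short exact sequence $0 \rightarrow \text{Prym}(X_\eta,X) \rightarrow \text{Prym}_V(M_\eta,M) \rightarrow \bigoplus_{i=1}^{s} \mathbb{Z}_2 \rightarrow 0$, and the discrete $V$-Picard/weight invariants determined by $\alpha$. The norm-map/torsor step you make explicit (fixing the determinant so that the admissible line $V$-bundles form translates of $\ker(\mathrm{Nm})$) is precisely what the paper leaves implicit in its appeal to Hitchin's fixed-determinant description of the fiber, so your argument matches the paper's (sketched) proof.
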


\subsection{BNR Correspondence for $\mathfrak{L}$-Twisted Higgs Bundles over a Root Stack}
In this subsection, we discuss the BNR correspondence for $\mathfrak{L}$-twisted Higgs bundles over a root stack. Although we do not use this version of the correspondence in this paper, we include it here for completeness. In fact, the $V$-manifold has a natural structure as a root stack, thus the correspondence for $V$-bundles implies the correspondence for bundles over a root stack.

We first review the definition of a Higgs bundle over an algebraic stack $\mathfrak{X}$. A coherent sheaf $\mathcal{F}$ on $\mathfrak{X}$ is defined as follows. Let $U,V$ be two schemes, and let
\begin{align*}
f_U: U \rightarrow \mathfrak{X}, \quad f_V : V \rightarrow \mathfrak{X}
\end{align*}
be two \'{e}tale morphisms. Let $h:U \rightarrow V$ be a morphism of schemes such that the following diagram commutes
\begin{center}
\begin{tikzcd}
U \arrow[rd, "f_U"] \arrow[rr, "h"] &  & V \arrow[ld,"f_V"] \\
& \mathfrak{X} &
\end{tikzcd}
\end{center}

A \emph{coherent sheaf} $\mathcal{F}$ on $\mathfrak{X}$ consists of a collection of coherent sheaves $\mathcal{F}_U$ for each \'{e}tale morphism $f_U: U \rightarrow \mathfrak{X}$ along with isomorphisms $a_{h}:\mathcal{F}_U \rightarrow h^* \mathcal{F}_V$. The coherent sheaf $\mathcal{F}$ is \emph{locally free} if $\mathcal{F}_U$ is locally free for any \'{e}tale morphism $U \rightarrow \mathfrak{X}$. If $\mathcal{F}$ is a locally free sheaf, we say $\mathcal{F}$ is a bundle over $\mathfrak{X}$. Let $X$ be the coarse moduli space of $\mathfrak{X}$. If $X$ is a Riemann surface, denote by $\omega_{\mathfrak{X}}$ the canonical bundle over the stack $\mathfrak{X}$, which is locally defined as $\omega_{\mathfrak{X}_U} := \omega_{U/ \mathbb{C}}$.

Let $\mathcal{F}$ be a locally free sheaf on $\mathfrak{X}$. A Higgs field $\Phi$ on $\mathcal{F}$ is a homomorphism $\Phi: \mathcal{F} \rightarrow \mathcal{F} \otimes \omega_{\mathfrak{X}}$. Locally, $\Phi_U : \mathcal{F}_U \rightarrow \mathcal{F}_U \otimes \omega_U$ and the following diagram is commutative:
\begin{center}
\begin{tikzcd}
\mathcal{F}_U \arrow[d,"a_h"] \arrow[r,"\Phi_U"]  & \mathcal{F}_U \otimes \omega_U \arrow[d,"a_h^{\mathcal{F}} \otimes a_h^{\omega_{\mathfrak{X}}}"] \\
h^* \mathcal{F}_V  \arrow[r,"\Phi_V"]  & h^* \mathcal{F}_V \otimes h^* \omega_V
\end{tikzcd}
\end{center}

Now we consider the root stack. Let $\mathfrak{X}$ be an algebraic stack and let $L$ be an invertible sheaf over $\mathfrak{X}$. We fix a positive integer $r$ and take a section $s \in \Gamma(\mathfrak{X},L)$. The pair $(L,s)$ provides a morphism $\mathfrak{X} \rightarrow [\mathbb{A}^1 / \mathbb{G}_m]$. Denote by $\theta_r : [\mathbb{A}^1 / \mathbb{G}_m] \rightarrow [\mathbb{A}^1 / \mathbb{G}_m]$ the morphism induced by the $r$-th power maps on both $\mathbb{A}^1$ and $\mathbb{G}_m$. Clearly, the map $\theta_r$ sends a pair $(L,s)$ to its $r$-th tensor product $(L^r,s^r)$. The root stack $\mathfrak{X}_{(L,s,r)}$ is defined as follows
\begin{align*}
\mathfrak{X}_{(L,s,r)}:=\mathfrak{X} \times_{[\mathbb{A}_1 / \mathbb{G}_m], \theta_r}[\mathbb{A}_1 / \mathbb{G}_m].
\end{align*}
As a special case, let $\mathfrak{X}=X$ be a scheme. Then the objects of $X_{(L,s,r)}$ over a scheme $S$ are quadruples $(f,M,t,\varpi)$, where $f: S \rightarrow X$ is a morphism, $M$ is an invertible sheaf on $S$, $t$ is a section in $\Gamma(S,M)$ and $\varpi : M^r \rightarrow f^* L$  is an isomorphism such that $\varpi(t^r) = f^*s$. If $X = \mathbb{A}^1$ with coordinate $x$, then we have
\begin{align*}
X_{\mathcal{O}_X,x,r} \cong [\mathbb{A}^1 / \mu_r].
\end{align*}

Now we consider the root stack $X_{\mathcal{O}_X(D),s,r}$, and in the rest of this subsection, the notation $\mathfrak{X}$ shall denote the root stack $X_{\mathcal{O}_X(D),s,r}$. For each point $x \in D$, there is an open affine neighborhood $\text{Spec}(A_x)$ of $x$ such that $\mathfrak{X}$ is locally isomorphic to 
\begin{align*}
[\left(\text{Spec}(A_x[t_x]/(t_x^r-s_x))\right)/\mu_r],
\end{align*}
where $s_x$ is the restriction of $s$ to $\text{Spec}(A_x)$. Abusing the language of orbifolds and root stacks, $\mathfrak{X}$ is locally isomorphic to $[\mathbb{C}/\mathbb{Z}_m]$ (see the notation in \S 4.1). Therefore, the results in \S 5.2 (Proposition \ref{3030} and Corollary \ref{315}) still hold for root stacks, and we omit the proofs.

I. Biswas, S. Majumder and M. L. Wong proved an equivalence of categories of Higgs bundles on $\mathfrak{X}$ and strongly parabolic Higgs bundles on $X$, where $\mathfrak{X}=X_{\mathcal{O}_X(D),s,r}$ (Theorem 4.7 in \cite{BiMaWo}). Their approach can be easily extended to the $\mathfrak{L}$-twisted case. Let $\mathfrak{L}$ be a line bundle over $\mathfrak{X}$, and let $\mathbb{L}$ be the parabolic line bundle over $(X,D)$ corresponding to $\mathfrak{L}$. With respect to the above notation, we have the following proposition.
\begin{prop}\label{318}
There is a one-to-one correspondence between $\mathfrak{L}$-twisted Higgs bundles over $\mathfrak{X}$ and $\mathbb{L}$-twisted parabolic Higgs bundles over $(X,D)$.
\end{prop}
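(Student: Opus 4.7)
The plan is to upgrade the Biswas--Majumder--Wong equivalence of categories (Theorem 4.7 of \cite{BiMaWo}) from the strongly parabolic/untwisted setting to an arbitrary $\mathfrak{L}$-twisting, by carrying the twisting line bundle through their construction. Since the paper has already established, in Proposition \ref{3030} and Corollary \ref{315}, the analogous statement for $V$-bundles under the $\mathcal{L}$-twisting, and since the root stack $\mathfrak{X}=X_{\mathcal{O}_X(D),s,r}$ is \'etale locally of the form $[\mathbb{C}/\mu_r]$ around each $x\in D$ (exactly the local model of the $V$-surface $M$ in \S 4), the orbifold/root-stack dictionary tells us that essentially the same local analysis must work.

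First I would recall the equivalence of abelian categories $\pi_{*}\colon \mathrm{Coh}(\mathfrak{X})\to \mathrm{Coh}^{\mathrm{par}}(X,D)$ sending a locally free sheaf $\mathcal{F}$ on $\mathfrak{X}$ to its pushforward $F=\pi_{*}\mathcal{F}$ equipped with the parabolic structure coming from the eigenbundle decomposition of the $\mu_{r}$-action on $\mathcal{F}$ in a local chart. Under this equivalence, the twisting line bundle $\mathfrak{L}$ corresponds to a parabolic line bundle $\mathbb{L}$ on $(X,D)$; by construction the functor is monoidal with respect to parabolic tensor product, so we obtain a natural identification
\begin{equation*}
\pi_{*}(\mathcal{F}\otimes \mathfrak{L}) \;\cong\; (\pi_{*}\mathcal{F})\otimes \mathbb{L}
\end{equation*}
in the parabolic category. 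Consequently, morphisms $\Phi\colon \mathcal{F}\to \mathcal{F}\otimes \mathfrak{L}$ on $\mathfrak{X}$ are in bijection with parabolic morphisms $\widetilde{\Phi}\colon F\to F\otimes \mathbb{L}$ on $(X,D)$, and this bijection is functorial in $\mathcal{F}$ and in $\mathfrak{L}$.

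Next I would verify the compatibility of this bijection with the notion of Higgs field locally. On a chart $[\mathrm{Spec}(A_{x}[t]/(t^{r}-s_{x}))/\mu_{r}]$ near $x\in D$, a $\mu_{r}$-equivariant section of $\mathfrak{L}$ has the form $t^{k}\hat{s}(t^{r})$ where $k$ records the isotropy weight of $\mathfrak{L}$ at $x$ (this is precisely the local calculation performed in \S 5.2 preceding Proposition \ref{3030}). Writing $\Phi=(\phi_{ij})$ with respect to an eigenbasis for the $\mu_{r}$-action on $\mathcal{F}$, the equivariance relation forces
\begin{equation*}
\phi_{ij}(t)=t^{k_{i}-k_{j}+k}\,\hat{\phi}_{ij}(t^{r}),
\end{equation*}
and this is exactly the local expression characterising the $\mathbb{L}$-twisted parabolic Higgs fields under the pushforward, so the correspondence of morphisms restricts to a bijection on Higgs fields.

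The main obstacle I anticipate is the bookkeeping in the last step: one has to check that the local form above, after adjusting the conventions (cf.\ Remark \ref{304} on dropping the terms with $k_{i}<k_{j}$ in order to preserve the filtration), gives precisely a parabolic morphism $F\to F\otimes \mathbb{L}$ in the sense of Definition \ref{205}, that is, that the filtration compatibility required by $\mathbb{L}$'s weights at $x$ is equivalent to the $\mu_{r}$-equivariance of $\Phi$ with respect to the isotropy characters of $\mathcal{F}$ and $\mathfrak{L}$. Once this local verification is in place, gluing is automatic from the functoriality of pushforward, and inverse functor (orbifold/root-stack pullback) can be defined chart-wise in the same way, giving the claimed equivalence.
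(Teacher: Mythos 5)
Your proposal is correct and follows essentially the same route as the paper: the paper likewise invokes the Biswas--Majumder--Wong equivalence (Theorem 4.7 of \cite{BiMaWo}) together with the observation that $\mathfrak{X}=X_{\mathcal{O}_X(D),s,r}$ is locally $[\mathbb{C}/\mu_r]$, and then extends to the $\mathfrak{L}$-twisted case by exactly the local equivariant computation of Proposition \ref{3030} (sections $t^{k}\hat{s}(t^{r})$ and Higgs components $t^{k_i-k_j+k}\hat{\phi}_{ij}(t^{r})$), with gluing over charts. Your added remarks on monoidality of the pushforward and the filtration check via Remark \ref{304} only make explicit what the paper leaves implicit.
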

The idea in the proof of this proposition is similar to those of Theorem \ref{303} and Proposition \ref{3030}.

Now Let $\eta=(\eta_1,...,\eta_n) \in \bigoplus_{i=1}^n H^0(\mathfrak{X},\mathfrak{L}^i)$ be an element in the Hitchin base for the stack $\mathfrak{X}$. We can define the spectral covering $\mathfrak{X}_\eta$ as the zero locus of
\begin{align*}
\lambda^n+\eta_1 \lambda^{n-1} + \dots + \eta_n.
\end{align*}
Let $X_\eta$ be the underlying space of $\mathfrak{X}_\eta$. Denote by $B$ the set of branch points in $\mathfrak{X}$ with respect to the covering $\mathfrak{X}_\eta \rightarrow \mathfrak{X}$. We assume that the intersection of $B$ and $D$ is empty. Since $\mathfrak{X}=X_{\mathcal{O}_X(D),s,r}$ is a root stack, $\mathfrak{X}_\eta$ is also a root stack with the same argument for $M_\eta$ in \S 5.2. The version of the correspondence for a root stack is then given in the following:

\begin{cor}[BNR Correspondence for Stacks]\label{319}
There is a one-to-one correspondence between line bundles over $\mathfrak{X}_\eta$ and $\mathfrak{L}$-twisted Higgs bundles over $\mathfrak{X}$.
\end{cor}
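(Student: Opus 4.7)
The plan is to deduce this corollary by composing the parabolic/root stack equivalence (Proposition \ref{318}) with the parabolic BNR correspondence (Theorem \ref{313}), exactly in the spirit of the three-row diagram presented in the introduction. Concretely, I would argue by chasing an $\mathfrak{L}$-twisted Higgs bundle $(E,\Phi)$ over $\mathfrak{X}$ through the following composite
\begin{align*}
\bigl\{\mathfrak{L}\text{-twisted Higgs bundles over }\mathfrak{X}\bigr\} \;\longleftrightarrow\; \bigl\{\mathbb{L}\text{-twisted parabolic Higgs bundles over }(X,D)\bigr\} \;\longleftrightarrow\; \bigl\{\text{parabolic line bundles over }X_\eta\bigr\} \;\longleftrightarrow\; \bigl\{\text{line bundles over }\mathfrak{X}_\eta\bigr\},
\end{align*}
where the first and third bijections come from Proposition \ref{318} applied to $\mathfrak{X}$ and to $\mathfrak{X}_\eta$ respectively, and the middle one is Theorem \ref{313}.

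First I would unpack the spectral construction at the level of stacks. Since $\mathfrak{X}=X_{\mathcal{O}_X(D),s,r}$ is \'etale-locally $[\mathbb{C}/\mathbb{Z}_m]$, the section $\eta\in\bigoplus_i H^0(\mathfrak{X},\mathfrak{L}^i)$ pulls back to a $\mu_r$-equivariant tuple on each chart, and its zero locus inside the total space of $\mathfrak{L}$ is stable under the isotropy. Thus $\mathfrak{X}_\eta$ is again a root stack whose underlying coarse space $X_\eta$ is the classical spectral curve associated to the parabolic Hitchin datum obtained from $(E,\Phi)$ via Proposition \ref{318}; moreover the pullback of $\mathfrak{L}$ to $\mathfrak{X}_\eta$ corresponds, via the same proposition applied now on the spectral side, to the parabolic line bundle $\mathbb{L}|_{X_\eta}$ used in Theorem \ref{313}.

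Next I would verify that the three correspondences are compatible, i.e.\ that the composition is well-defined and bijective. In the forward direction, given $(E,\Phi)$ one produces a parabolic Higgs bundle $(F,\tilde\Phi)$ over $(X,D)$, whose characteristic polynomial has coefficients $\eta_i\in H^0(X,\mathbb{L}^i)$ matching the image of $\eta$ under $H^0(\mathfrak{X},\mathfrak{L}^i)\cong H^0(X,\mathbb{L}^i)$ (this identification is standard for locally free sheaves on root stacks). Applying Theorem \ref{313} yields a strictly compatible parabolic line bundle over $X_\eta$, which, by Proposition \ref{318} applied to $\mathfrak{X}_\eta$, corresponds to a line bundle over $\mathfrak{X}_\eta$. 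The inverse direction runs each correspondence backwards and lands back at $(E,\Phi)$ up to isomorphism, since each arrow is a bijection.

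The only substantive point to check, and what I expect to be the main technical obstacle, is the compatibility of the spectral constructions on both sides: one must confirm that the spectral covering commutes with the root stack/parabolic equivalence, in the sense that the parabolic line bundle assigned by Theorem \ref{313} to $(F,\tilde\Phi)$ coincides, under Proposition \ref{318}, with the one coming from pulling $\mathfrak{L}|_{\mathfrak{X}_\eta}$ back along $\mathfrak{X}_\eta\to\mathfrak{X}$. This is a local statement on the affine charts $[\mathrm{Spec}(A_x[t_x]/(t_x^r-s_x))/\mu_r]$ and reduces, by the assumption $B\cap D=\emptyset$, to the observation that $\pi^{-1}(x)$ consists of $n$ distinct points all lying over a point with trivial ramification on the spectral side; the isotropy representations and the weights then transport coherently from $\mathfrak{X}$ to $\mathfrak{X}_\eta$. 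Once this local compatibility is verified, the corollary is immediate from the composition of the three bijections.
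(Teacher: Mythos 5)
Your proposal is correct and follows essentially the same route as the paper: the paper's proof is a one-line composition, "similar to the proof of Corollary \ref{315}," combining the root-stack/parabolic equivalence of Proposition \ref{318} (applied on both the base and the spectral side) with the parabolic BNR correspondence of Theorem \ref{313}, exactly the chain of bijections you describe. Your additional local verification on the charts $[\mathrm{Spec}(A_x[t_x]/(t_x^r-s_x))/\mu_r]$ that $\mathfrak{X}_\eta$ is again a root stack and that the correspondences are compatible is precisely the point the paper handles in the paragraph preceding the corollary, so nothing is missing.
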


\begin{proof}
Similar to the proof of Corollary \ref{315}, this Corollary is directly implied by Proposition \ref{3030} and Proposition \ref{318}.
\end{proof}

\subsection{Conclusion}
With respect to the discussion in \S4 and \S5, we have one-to-one correspondences among the following six categories:
\begin{center}
\begin{tikzcd}[column sep=12ex]	
	\text{$\mathcal{L}$-twisted Higgs $V$-bundles over $M$ } \arrow[r,"\text{ Corollary \ref{315}}"] \arrow[d,"\text{ Proposition \ref{3030}}"] & \text{ line $V$-bundles over $M_\eta$} \arrow[d,"\text{ Proposition \ref{3030}}"]\\

	 \text{ parabolic $\mathbb{L}$-twisted Higgs bundles over $X$} \arrow[r,"\text{ Theorem \ref{313}}"] \arrow[d,"\text{ Proposition \ref{318}}"] & \text{parabolic line bundles over $X_\eta$} \arrow[d,"\text{ Proposition \ref{318}}"] \\
	
	\text{ $\mathfrak{L}$-twisted Higgs bundles over stack $\mathfrak{X}$} \arrow[r,"\text{ Corollary \ref{319}}"] & \text{ line bundles over $\mathfrak{X}_\eta$}.
\end{tikzcd}	
\end{center}

In \S 7 later on, we will use the BNR correspondence and the $V$-Prym variety to prove the connectedness of the components of the moduli space $\mathcal{M}_{par}(\text{Sp}(2n,\mathbb{R}))$ under several considerations for the parabolic structure.

\section{Topological Invariants of the Moduli Space of Parabolic $\text{Sp}(2n,\mathbb{R})$-Higgs Bundles}

\begin{defn}
Let $\left( V,\beta ,\gamma  \right)$ be a parabolic $\text{Sp}\left( 2n,\mathbb{R} \right)$-Higgs bundle over $\left( X,D \right)$. The \emph{parabolic Toledo invariant} of $\left( V,\beta ,\gamma  \right)$ is defined as the rational number $\tau =par\deg \left( V \right)$.
\end{defn}
In \cite{KySZ}, it is shown that for a semistable parabolic $\text{Sp}\left( 2n,\mathbb{R} \right)$-Higgs bundle, the parabolic Toledo invariant satisfies the inequality
\[\left| \tau  \right|\le n\left( g-1+\frac{s}{2} \right),\]
where $s$ is the number of points in the divisor $D$. We call  \emph{maximal} the parabolic $\text{Sp}(2n,\mathbb{R})$-Higgs bundles for which $\tau =n\left( g-1+\frac{s}{2} \right)$ and denote the components containing those by
\[\mathsf{\mathcal{M}}_{par}^{\max }(\text{Sp}(2n,\mathbb{R})):=\mathsf{\mathcal{M}}_{par}^{n\left( g-1+\frac{s}{2} \right)}(\text{Sp}(2n,\mathbb{R})).\]

In this maximal case and for a fixed square root $L_0$ of $K(D)$ there is a one-to-one correspondence, called the \emph{parabolic Cayley correspondence}, which maps every maximal triple $\left( V,\beta ,\gamma  \right)$ to a triple $(W,\phi,c)$ for $W:=V \otimes L^{-1}_0$ and for $\phi,c$ defined by
\begin{align*}
& \phi=(\beta \otimes 1_{L_0}) \circ (\gamma \otimes 1_{L^{-1}_0}): W=V\otimes L^{-1}_0 \rightarrow V \otimes L^{\frac{3}{2}}_0=W\otimes K(D)^2,\\
& c=\gamma \otimes 1_{L^{-1}_0} : W=V \otimes L^{-1}_0 \rightarrow V^{\vee} \otimes K(D) \otimes L^{-1}_0=W^{\vee}.
\end{align*}
Note also that the map $\gamma$ is an isomorphism in the maximal case, implying that $c$ is also an isomorphism and $\phi:W \rightarrow W \otimes K(D)^2$ is a $K(D)^2$-twisted parabolic Higgs field for $W$. Moreover, the parabolic structure on the bundle $V$ provides the construction of the corresponding $V$-surface $M$, where $M$ is the $V$-manifold with $s$-many marked points $p_1,...,p_s$, around which the isotropy group is $\mathbb{Z}_2$, and $X$ is the underlying surface of $M$. Let us use the same notation $(V,\Phi)$ or $(V,\beta,\gamma)$ for the corresponding $V$-Higgs bundle over $M$. On the other hand, since the parabolic degree is equal to the degree as a $V$-bundle, we use the notation $par\deg(L)$ to also mean the degree of $L$ as a $V$-bundle, while $\deg(L)$ is the degree of $L$ over the underlying surface $X=|M|$. As a $V$-bundle morphism, the isomorphism $c$ induces a quadratic form on $W$. Hence, the structure group of $W$ is $\text{O}(n,\mathbb{C})$.

\subsection*{Topological Invariants of $\mathcal{M}^{max}_{par}(\text{Sp}(2n,\mathbb{R}))$}

We determine the topological invariants for maximal parabolic $\text{Sp(2n}\text{,}\mathbb{R}\text{)}$-Higgs bundles as elements in $H^1_V(M,\mathbb{Z}_2)$, the first $V$-cohomology group. We briefly review next the definition and calculations of $V$-cohomology groups (see \cite[\S 7]{KySZ} for more details).

One can consider a $V$-surface $M$ by gluing two charts $U_1$ and $U_2$, for $U_1= X \backslash \{x_1,...,x_s\}$ and $U_2 = \coprod_{i=1}^s D / \mathbb{Z}_2$. With respect to this atlas, we define $M_V$ as
\begin{align*}
M_V=V_1 \bigcup V_2, \quad V_1 = X \backslash \{x_1,...,x_s\}, \quad V_2 = \coprod_{i=1}^s D\times_{\mathbb{Z}_2} E\mathbb{Z}_2,
\end{align*}
where $D$ is a disk around the punctures $x_{i}$ and $X$ is a compact Riemann surface of genus $g$. Define the $V$-cohomology group as
\begin{align*}
H^i_V(M,\mathbb{Z}_2):=H^i(M_V,\mathbb{Z}_2).
\end{align*}
If there is no confusion, we omit $\mathbb{Z}_2$ in the cohomology group.

Now we are ready to calculate the rank of $H^1(M_V)$ and $H^2(M_V)$. By the Mayer-Vietoris sequence, we have
\begin{align*}
0 \rightarrow H^0(M_V) \rightarrow H^0(V_1) \bigoplus H^{0}(V_2) \rightarrow H^0(V_1\bigcap V_2)\\
\xrightarrow[]{j_1}  H^1(M_V) \rightarrow H^1(V_1) \bigoplus H^{1}(V_2) \rightarrow H^1(V_1\bigcap V_2)\\
\xrightarrow[]{j_2} H^2(M_V) \rightarrow H^2(V_1) \bigoplus H^{2}(V_2) \rightarrow H^2(V_1\bigcap V_2).\\
\end{align*}
Note that $V_1=X \backslash \{x_1,...,x_s\}$ and $V_1\bigcap V_2=\prod_{i=1}^s S^1$. We have
\begin{align*}
& \text{rk}(H^0(V_1\bigcap V_2))=s, \quad \text{rk}(H^0(V_1))=1, \\
& \text{rk}(H^1(V_1\bigcap V_2))=s, \quad \text{rk}(H^1(V_1))=2g+s-1, \\
& \text{rk}(H^2(V_1\bigcap V_2))=0, \quad \text{rk}(H^2(V_1))=0,
\end{align*}
where $\text{rk}(\bullet)$ denotes the rank of the cohomology group with coefficient in $\mathbb{Z}_2$.
We use the Leray spectral sequence to calculate the cohomology group of $V_2$, and we have
\begin{align*}
    \text{rk}(H^0(V_2))=s,\\
    \text{rk}(H^1(V_2))=s,\\
    \text{rk}(H^2(V_2))=s.
\end{align*}
Therefore,
\begin{align*}
& \text{rk}(H^1(M_V))=2g+s-1,\\
& \text{rk}(H^2(M_V))=s.
\end{align*}

For the $\text{Sp(4}\text{,}\mathbb{R}\text{)}$-case we see the following:
\begin{enumerate}
\item[(1)] If $\bigwedge^2 W \neq 0$, the structure group of $W$ is $\text{O}(2,\mathbb{C})$ and every pair $(u,v)$ is a topological invariant for $W$, where $0 \neq u \in H^1_V(M,\mathbb{Z}_2)$ and $v \in H^2_V(M,\mathbb{Z}_2)$. Thus, there are clearly $2^s(2^{2g+s-1}-1)$ many distinct pairs $(u,v)$. Denote by $\mathcal{M}_{par}^{u,v}$ the moduli space corresponding to $(u,v)$.
\item[(2)] If $\bigwedge^2 W=0$, then the structure group can be reduced to $\text{SO}(2, \mathbb{C}) \subset \text{O}(2,\mathbb{C})$. From the identification $\text{SO}(2,\mathbb{C}) \cong \mathbb{C}^*$, $W$ can be decomposed as a direct sum $W=L \oplus L^{\vee}$, where $L$ is a line $V$-bundle and $L^{\vee}$ is the dual of $L$. Now, stability for the map $\phi:W \rightarrow W \otimes K(D)^2$, provides the existence of a non-trivial holomorphic map $L \rightarrow L^{\vee}\otimes K(D)^2$, therefore it holds necessarily that $par\text{deg}(L) \leq 2g-2+s$.
    \begin{enumerate}
    \item[a.] If $par\deg(L)\neq 2g-2+s$, then every possible value of the degree gives at least one topological invariant, thus providing at least $2g-2+s$ different non-negative values. Now we want to describe all possible topological invariants for line $V$-bundles with degree smaller than $2g-2+s$. Recall from \S 4.2 that the $V$-Picard group $\text{Pic}_V(M)$ classifies all line $V$-bundles over $M$. The $V$-Picard group has the following short exact sequence
        \begin{align*}
        0 \rightarrow \text{Pic}(X) \rightarrow \text{Pic}_V(M) \rightarrow \bigoplus_{x \in D} \mathbb{Z}_2 \rightarrow 0.
        \end{align*}
By definition, the parabolic degree is the sum of the classical degree and the weights
        \begin{align*}
        par \deg =\deg + \text{weights}.
        \end{align*}
In fact, we can understand this definition from the above exact sequence about $\text{Pic}_V(M)$. The classical $\deg$ comes from $\text{Pic}(X)$ and the weights come from $\bigoplus\limits_{x\in D} \mathbb{Z}_2$. From the exact sequence, it is also clear that $\text{Pic}_V(X)$ is not necessarily connected. Then every pair $(d,w)$, where $d \in \text{Pic}(X)$ the classical degree and $w \in \bigoplus\limits_{x\in D} \mathbb{Z}_2$ the weight, describes a topological invariant for the line $V$-bundles. Although different pairs $(d,w)$ may have the same parabolic degree, they provide different topological invariants for line $V$-bundles. Denote by $\mathcal{M}_{par}^{0,(d,w)}$ the moduli space in this case.

    \item[b.] If $par\deg(L)=2g-2+s$, we have $L^2 \cong K(D)^2$. This describes parabolic $\text{Sp}(4,\mathbb{R})$-Higgs bundles $\left( E=V\oplus {{V}^{\vee }},\Phi  \right)$ with $V=N\oplus {{N}^{\vee }}K\left( D \right)$, for a line bundle  $N=K{{\left( D \right)}^{\frac{3}{2}}}$. Thus, square roots of $K\left( D \right)$ parameterize components containing such Higgs bundles, and this contributes to $2^{2g+s-1}$ topological invariants. Denote by $\mathcal{M}_{par}^{0,2g-2+s,L}$ the moduli space in this case.
    \end{enumerate}
\end{enumerate}

The investigation of the $\text{Sp}(2n,\mathbb{R})$ case, $n \geq 3$, is analogous to the one for $\text{Sp}(4,\mathbb{R})$. We define the Cayley partner $(W,c,\phi)$ similarly and describe the topological invariants for all possible Cayley partners. By Proposition \ref{407}, we can discuss this problem in two cases: $\beta=0$ and $\beta \neq 0$.

\begin{enumerate}
\item If $\beta=0$, it means that the corresponding morphism $\phi$ in the Cayley partner is also trivial. Therefore the Cayley partner reduces to $(W,c)$, where $c$ is an isomorphism of the parabolic $\text{GL}(n,\mathbb{R})$-bundle $W$, or equivalently the $V$-bundle $W$. Thus the $V$-cohomology groups $H^1_V(M,\mathbb{Z}_2)$, $H^2_V(M,\mathbb{Z}_2)$ are topological invariants for such bundle $W$. In this case, we have $2^s \cdot 2^{2g+s-1}$ many topological invariants. Denote by $\mathcal{M}_{par}^{u,v}(\text{Sp}(2n,\mathbb{R}))$ the component of $V$-Higgs bundle $(E,\Phi)$ such that the topological invariants of its Cayley partner $(W,\phi,c)$ are $u \in H^1_V(M,\mathbb{Z}_2)$ and $v \in H^2_V(M,\mathbb{Z}_2)$.
\item If $\beta \neq 0$, in either Case (2) or Case (3) of Proposition \ref{407} one needs to fix a square root $L$ of the bundle $K(D)$. Note that we are taking the square root of a $V$-bundle. Denote by $\mathcal{M}_{par}^{0,n(g-1+\frac{s}{2}),L}(\text{Sp}(2n,\mathbb{R}))$ the moduli space in this case and there are $2^{2g+s-1}$ many such choices for the square root $L$.
\end{enumerate}

\section{Connected Components of $\mathcal{M}^{max}_{par}(\text{Sp}(2n, \mathbb{R}))$}
Based on the previous two sections, we will now prove that the moduli spaces $\mathcal{M}_{par}^{u,v}$, $\mathcal{M}_{par}^{0,(d,w)}$ and $\mathcal{M}_{par}^{0,2g-2+s,L}$ for each possible value of the corresponding topological invariants are connected. As explained in \S 3, it is enough to prove that the subspace of local minima $\mathcal{N}$ for each of those components is connected. The proofs below are for the case $G=\text{Sp}\left( 4,\mathbb{R} \right)$; for the case  $G=\text{Sp}\left( 2n,\mathbb{R} \right)$ when $n\ge 3$ the proofs are no different, thus we state directly the analogous results.

\begin{prop}\label{501}
$\mathcal{M}_{par}^{u,v}$ is connected, where $u\neq 0$.
\end{prop}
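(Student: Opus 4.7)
The plan is a Bott--Morse reduction: by Proposition~\ref{209} and Lemma~\ref{216}, connectedness of $\mathcal{M}_{par}^{u,v}$ is equivalent to connectedness of the subvariety $\mathcal{N}^{u,v}$ of local minima of $f$ inside it, which Proposition~\ref{406} pins down explicitly. The first step is to apply the parabolic Cayley correspondence recalled at the start of \S 6, sending a maximal triple $(V,\beta,\gamma)$ to a $K(D)^2$--twisted triple $(W,c,\phi)$ with $(W,c)$ an orthogonal rank-$2$ parabolic (equivalently $V$--)bundle and $\phi = (\beta \otimes 1) \circ c$.

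The crucial observation in the second step is that type~(2) minima in Proposition~\ref{406} cannot occur in $\mathcal{M}_{par}^{u,v}$ when $u \neq 0$: such a minimum has $V = L_{1} \oplus L_{2}$ with $\gamma$ anti-diagonal, and $\gamma$ being an isomorphism (by maximality) forces $L_{1} L_{2} \cong K(D)$; tensoring by $L_0^{-1}$ then yields $W = A \oplus A^{\vee}$ with $A = L_{1} L_0^{-1}$, so $\det W \cong \mathcal{O}$, contradicting $u \neq 0$. Hence every local minimum in $\mathcal{M}_{par}^{u,v}$ has $\beta = 0$, equivalently $\phi = 0$ on the Cayley side, and $\mathcal{N}^{u,v}$ is precisely the moduli space of semistable orthogonal rank-$2$ parabolic $V$--bundles $(W,c)$ with $\det W \cong u$ and second invariant $v$.

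The final step is to identify this moduli with a single connected component of a $V$--Prym variety. The nontrivial $2$--torsion class $u \in H^1_V(M, \mathbb{Z}_2)$ determines an \'etale double $V$--cover $\pi : \widetilde{M}_u \to M$, and the $V$--bundle analogue of the classical spectral description of orthogonal bundles --- built on the BNR correspondence for $V$--surfaces of Corollary~\ref{315} and the exact sequence for $\text{Pic}_V$ of Corollary~\ref{308} --- identifies orthogonal $(W,c)$ with $\det W \cong u$ with line $V$--bundles $N$ on $\widetilde{M}_u$ satisfying $\sigma^{*}N \cong N^{\vee}$ (for $\sigma$ the deck involution), via $W = \pi_{*}N$. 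Under this identification the invariant $v$ picks out, through the short exact sequence preceding Proposition~\ref{317}, a single connected component of $\text{Prym}_V(\widetilde{M}_u, M)$; being a torsor over the classical Prym variety (an abelian variety) up to finite quotient, this component is connected. The main obstacle in carrying out this plan is the rigorous treatment of the orbifold/parabolic spectral picture for orthogonal bundles together with the correct indexing by $v \in H^2_V(M, \mathbb{Z}_2)$ via Corollary~\ref{308}; once these are settled, connectedness of $\mathcal{N}^{u,v}$, and hence of $\mathcal{M}_{par}^{u,v}$, is immediate.
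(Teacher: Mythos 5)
Your proposal is correct and follows essentially the same route as the paper: Bott--Morse reduction to the $\phi=0$ minima $(W,c)$, passage to the double cover of the $V$-surface determined by $u$, and identification of these minima with a connected component of the $V$-Prym variety singled out by $v$ through the exact sequence over $\bigoplus_{i=1}^{s}\mathbb{Z}_2$. The only cosmetic differences are that you phrase the spectral step as a pushforward $W=\pi_{*}N$ whereas the paper pulls $W$ back and splits it as $L\oplus L^{-1}$, and that you spell out the determinant computation ruling out type-(2) minima, which the paper merely asserts via Proposition \ref{406}.
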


\begin{proof}
As seen in Proposition \ref{406}, $u={{w}_{1}}\left( W \right)\ne 0$ only for triples $\left( V,\beta ,\gamma  \right)$ of the type (1), that is, for those when $\beta =0$. This implies that the Higgs field $\phi $ in the Cayley partner vanishes, hence we consider the moduli space for the pairs $\left( W,c \right)$.\\
Notice that the first $V$-cohomology group is isomorphic to $\text{Hom}(\pi_V^1(M),\mathbb{Z}_2)$. Moreover,
\begin{align*}
\text{Hom}(\pi_V^1(M),\mathbb{Z}_2) \cong \text{Hom}(\pi_1(X / D),\mathbb{Z}_2).
\end{align*}
Thus, given an element $u \in H^1_V(M,\mathbb{Z}_2)$, we can construct a degree two \'{e}tale covering over the underlying surface with punctures
\begin{align*}
\pi_u: X_u/ \pi_u^{-1}(D) \rightarrow X/ D,
\end{align*}
where $X_u$ is a compact surface. The existence of $X_u$ is provided by Theorem 1.1 in \cite{FuSt}. Clearly,
\begin{align*}
\pi_u^* W |_{X/D}=L |_{X/D} \bigoplus L^{-1} |_{X/D},
\end{align*}
where $L$ is a line bundle over $X_u$, with $\pi^* c$ an isomorphism of $\pi_u^* W$. A $V$-surface $M_{X_u}$ with underlying surface $X_u$ can be constructed with local charts given by
\begin{align*}
& \phi_y: U_y \rightarrow D^k / \mathbb{Z}_2, & y \in \pi_u^{-1}(D);\\
& \phi_p: U_p \rightarrow D^k, & p \in M \backslash \pi_u^{-1}(D).
\end{align*}
There is also a natural covering of $V$-surface $\pi_u : M_{X_u} \rightarrow M$, thus there are three covering morphisms summarized below
\begin{align*}
\text{ \'{e}tale: }& X_u/ \pi_u^{-1}(D) \rightarrow X/ D,\\
\text{ ramified: }& X_u \rightarrow X,\\
\text{ $V$-surface: }& M_{X_u} \rightarrow M.
\end{align*}
We shall use the same notation $\pi_u$ for all these coverings.
\begin{center}
\begin{tikzcd}	
	 X_u/ \pi_u^{-1}(D) \arrow[d, "\pi_u" ] \arrow[r,hook] & X_u  \arrow[d, "\pi_u"]
	& M_{X_u} \arrow[l,Rightarrow] \arrow[d, "\pi_u" ] \\
	
	X/ D \arrow[r,hook] & X
	& M \arrow[l,Rightarrow]
\end{tikzcd}	
\end{center}
The direct sum
\begin{align*}
\pi_u^* W |_{X/D}=L |_{X/D} \oplus L^{-1} |_{X/D}
\end{align*}
can be extended to the $V$-surface $M_{X_u}$. Let $\tau : M_{X_u} \rightarrow M_{X_u}$ be the involution interchanging the coverings and clearly $\tau^* L = L^{-1}$. From the discussion following Proposition \ref{3170}, $L$ is a well-defined element in the Prym variety $\text{Prym}(M_{X_u},M)$ of $V$-surfaces.\\
We now turn to these line $V$-bundles $L$ in the Prym variety $\text{Prym}(M_{X_u},M)$. In our case, the monodromy group is $\mathbb{Z}_2$, hence there are only two possible monodromy actions around $p \in \pi_u^{-1}(D)$, namely $0$ and $\frac{1}{2}$. If $p_1,p_2 \in \pi_u^{-1}(D)$ and $\pi_u(p_1)=\pi_u(p_2)$, then the monodromy action around these two points should be the same described by the condition $\tau^{*}L=L^{-1}$. The short exact sequence
\begin{align*}
    0 \rightarrow \text{Pic}(X_u) \rightarrow \text{Pic}_V(M_{X_u}) \rightarrow \bigoplus_{i=1}^{s} \mathbb{Z}_2 \rightarrow 0
\end{align*}
implies the following sequence for the Prym variety
\begin{align*}
    0 \rightarrow \text{Prym}(X_u,X) \rightarrow \text{Prym}(M_{X_u},M) \rightarrow \bigoplus_{i=1}^{s} \mathbb{Z}_2 \rightarrow 0.
\end{align*}
Since the covering $X_u \rightarrow X$ is a ramified covering, $\text{Prym}(X_u,X)$ is connected. Therefore, each element in $\bigoplus_{i=1}^{s} \mathbb{Z}_2$ provides a connected component of the Prym variety $\text{Prym}(M_{X_u},M)$.\\
The above argument implies that given a weight $w$ and an element in the first $V$-cohomology group $u \in \text{Hom}(\pi_V^1(M),\mathbb{Z}_2)$, the space of all such pairs $(W,c)$ is connected. Now we only have to show that we can use the second $V$-cohomology $H^2_V(M,\mathbb{Z}_2)$ to describe all possible weights in $\bigoplus\limits_{i=1}^{s} \mathbb{Z}_2$.\\
For the $V$-manifold $M_{V}$ of $M$, the exact sequence
\begin{align*}
0 \rightarrow \mathbb{Z} \rightarrow O_{M_V} \rightarrow O^*_{M_V} \rightarrow 0
\end{align*}
provides that
\begin{align*}
H^1_V(M, O_{M_V}) \rightarrow H^1_V(M, O^*_{M_V}) \xrightarrow{\varpi}  H^2_V(M, \mathbb{Z}).
\end{align*}
The first cohomology group $H^1_V(M, O^*_{M_V})$ is exactly the $V$-Picard group $\text{Pic}_V(M)$. Therefore, there is a morphism
\begin{align*}
\varpi:\text{Pic}_V(M) \rightarrow  H^2_V(M, \mathbb{Z}).
\end{align*}
By taking the $\mathbb{Z}_2$ coefficient, it is easy to see that the morphism $\varpi$ induces the isomorphism
\begin{align*}
\bigoplus_{i=1}^{s} \mathbb{Z}_2 \cong H^2_V(M, \mathbb{Z}).
\end{align*}
\end{proof}

\begin{cor}\label{502}
The number of all the connected components of the form $\mathcal{M}_{par}^{u,v}$ is $2^s(2^{2g+s-1}-1)$.
\end{cor}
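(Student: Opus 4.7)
The proof of Corollary \ref{502} will be a short counting argument assembled from ingredients already established in Sections 6 and 7. Since Proposition \ref{501} shows that each stratum $\mathcal{M}_{par}^{u,v}$ with $u\neq 0$ is connected, and the topological invariants $(u,v)\in H^1_V(M,\mathbb{Z}_2)\times H^2_V(M,\mathbb{Z}_2)$ by construction separate components, all that remains is to count the admissible pairs $(u,v)$.

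The plan is as follows. First, I would recall from the topological invariant discussion in Section 6 that the components of the form $\mathcal{M}_{par}^{u,v}$ arise exactly in the sub-case $\bigwedge^2 W \neq 0$ of the Cayley partner analysis, where the structure group is the full orthogonal group $\mathrm{O}(2,\mathbb{C})$ rather than $\mathrm{SO}(2,\mathbb{C})$. In this sub-case the first Stiefel--Whitney class $u = w_1(W) \in H^1_V(M,\mathbb{Z}_2)$ is required to be nonzero (precisely what distinguishes it from the reducible case where the second Stiefel--Whitney class alone appears), and the second invariant $v \in H^2_V(M,\mathbb{Z}_2)$ is unconstrained.

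Next, I would invoke the rank computations carried out in Section 6 via the Mayer--Vietoris sequence for the decomposition $M_V = V_1 \cup V_2$, which give
\begin{equation*}
\mathrm{rk}\, H^1_V(M,\mathbb{Z}_2) = 2g+s-1, \qquad \mathrm{rk}\, H^2_V(M,\mathbb{Z}_2) = s.
\end{equation*}
Hence $|H^1_V(M,\mathbb{Z}_2)| = 2^{2g+s-1}$ and $|H^2_V(M,\mathbb{Z}_2)| = 2^s$. The number of nonzero elements in $H^1_V(M,\mathbb{Z}_2)$ is therefore $2^{2g+s-1} - 1$, and multiplying by the $2^s$ independent choices for $v$ gives the advertised count $2^s (2^{2g+s-1} - 1)$.

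There is essentially no obstacle here beyond bookkeeping: the content is already packaged in Proposition \ref{501} (connectedness of each stratum) and in the cohomological rank calculations of Section 6. The only minor subtlety worth flagging explicitly in the write-up is that distinct pairs $(u,v)$ really do give topologically distinct components (so we are not overcounting): this is immediate because $(u,v)$ are characteristic classes of the Cayley partner $(W,c)$ and hence are locally constant on the moduli space, so two parabolic Higgs bundles in the same connected component must share the same pair of invariants.
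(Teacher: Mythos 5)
Your proposal is correct and follows essentially the same route as the paper: Proposition \ref{501} gives connectedness of each stratum, and the count $2^s(2^{2g+s-1}-1)$ comes directly from the Mayer--Vietoris rank computations $\mathrm{rk}\,H^1_V(M,\mathbb{Z}_2)=2g+s-1$ and $\mathrm{rk}\,H^2_V(M,\mathbb{Z}_2)=s$, counting nonzero $u$ and arbitrary $v$. The extra remark that distinct pairs $(u,v)$ are not overcounted, being locally constant invariants of the Cayley partner, is a harmless elaboration of what the paper leaves implicit.
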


\begin{proof}
Follows from the fact that $\text{rk}(H^1(M_V))=2g+s-1$ and $\text{rk}(H^2(M_V))=s$.
\end{proof}

\begin{prop}\label{503}
$\mathcal{M}_{par}^{0,(d,w)}$ is connected.
\end{prop}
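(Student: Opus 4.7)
The plan is to adapt the strategy of Proposition \ref{501} to the case $u = 0$, where no étale double cover of $(X, D)$ is required. As explained at the start of the section, it suffices to show that the subvariety of local minima of the Hitchin functional inside $\mathcal{M}_{par}^{0,(d,w)}$ is connected. In the stratum under consideration one has $\text{par}\deg V > 0$, so by Proposition \ref{406}(1) every local minimum satisfies $\beta = 0$, which via the Cayley correspondence forces $\phi = 0$ on the Cayley partner $(W, \phi, c)$. The subcase $\bigwedge^2 W = 0$ reduces the structure group of $W$ from $\text{O}(2, \mathbb{C})$ to $\text{SO}(2, \mathbb{C}) \cong \mathbb{C}^*$, yielding a canonical splitting $W = L \oplus L^{\vee}$ with $c$ the pairing exchanging the two summands. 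Classification therefore reduces to classifying line $V$-bundles $L$ on $M$ with prescribed topological invariant $(d, w)$, modulo the involution $L \leftrightarrow L^{\vee}$.

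The central step is to identify the fiber over $(d, w)$ of the map $\text{Pic}_V(M) \to \mathbb{Z} \oplus \bigoplus_{i=1}^s \mathbb{Z}_2$ of Corollary \ref{308}. From the short exact sequence
\[
0 \to \text{Pic}(X) \to \text{Pic}_V(M) \to \bigoplus_{i=1}^s \mathbb{Z}_2 \to 0,
\]
this fiber is a torsor under $\text{Pic}^0(X)$, hence isomorphic to the classical Jacobian $\text{Jac}^d(X)$, which is a connected complex torus of dimension $g$. Since a continuous quotient of a connected space is connected, quotienting this parameter space by the $\mathbb{Z}_2$-involution $L \leftrightarrow L^{\vee}$ preserves connectedness, yielding the proposition.

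The point to monitor carefully, rather than a genuine obstacle, is that every local minimum in $\mathcal{M}_{par}^{0,(d,w)}$ really does arise as a polystable pair $(V, \beta = 0, \gamma)$ whose Cayley partner decomposes as above with some $L$ in this parameter space, and conversely that every $L$ in $\text{Jac}^d(X)$ produces a polystable Higgs bundle. The latter amounts to a slope inequality for $L$ and $L^\vee$ inside $V = L \oplus L^\vee \otimes K(D)$ which holds generically on the Jacobian and extends by semicontinuity to its polystable closure, in direct parallel to the verification carried out in the proof of Proposition \ref{501}. I expect no further technical issues beyond this routine bookkeeping.
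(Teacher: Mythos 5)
There is a genuine gap here, and it is not the ``routine bookkeeping'' you flag at the end: your argument only accounts for local minima with vanishing Higgs field, and in the stratum $\mathcal{M}_{par}^{0,(d,w)}$ such minima exist only when $d=0$. You read Proposition \ref{406} as saying that, since $\text{par}\deg V>0$, every local minimum has $\beta=0$ and hence $\phi=0$ on the Cayley partner; but the two cases of Proposition \ref{406} are not alternatives selected by the degree. In the maximal situation $\text{par}\deg V=2g-2+s$ both types of minima occur, and which type appears in a given stratum is forced by polystability. If $d>0$ and $\phi=0$, then $L\subset W=L\oplus L^{\vee}$ is a $\phi$-invariant isotropic subbundle of positive degree and destabilizes the pair, so \emph{no} point of your parameter space $\mathrm{Jac}^d(X)$ yields a polystable object; this is not a slope inequality that ``holds generically and extends by semicontinuity'' --- it fails identically on the whole Jacobian. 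Consequently, for $d>0$ every local minimum is of type (2) of Proposition \ref{406}: on $W=L\oplus L^{\vee}$ the Higgs field is strictly lower triangular, given by a nonzero section $\gamma\in H^0\bigl(X,(L^{\vee}K(D))^2\bigr)$, and your parametrization misses all of these points.

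The nonzero section $\gamma$ is precisely the extra continuous datum that makes this case nontrivial: connectedness of the minima locus does not follow from connectedness of the fiber of $\text{Pic}_V(M)\to\mathbb{Z}\oplus\bigoplus_{i=1}^s\mathbb{Z}_2$ over $(d,w)$ alone. The paper's proof splits accordingly: for $d=0$ one shows $\Phi=0$ at the minima and argues exactly as you do, via a surjection from $\text{Pic}_w^0(X)$ onto the locus of pairs $(W,c)$; for $d>0$ one parametrizes the pairs $(L,\gamma)$ by exhibiting the minima locus as a pullback of the Abel--Jacobi map $S^{4g-4+2s-2l}(X)\to\text{Pic}^{4g-4+2s-2l}(X)$ along $L\mapsto (L^{\vee}K(D))^2$, and connectedness of the symmetric product gives the claim. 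Some argument of this kind (a fibration over the Jacobian or the symmetric product with connected fibers recording $\gamma$ up to scale) is unavoidable; without it your proof establishes connectedness only of the $d=0$ substratum. A further small inaccuracy: for a fixed invariant $(d,w)\neq(0,0)$ the bundles $L$ and $L^{\vee}$ have different invariants, so there is no residual $\mathbb{Z}_2$-quotient of the parameter space to take.
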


\begin{proof}
Denote by $\mathcal{N}_{par}^{0,(d,w)}$ the subspace of local minima of $\mathcal{M}_{par}^{0,(d,w)}$. We show that $\mathcal{N}_{par}^{0,(d,w)}$ is connected. From the discussion in \S 4.2 and \S 5.3, we know the $V$-Picard group $\text{Pic}_V(M)$ is not necessarily connected. If we consider the subgroup $\text{Pic}_{w}(M)$, which consists of all line $V$-bundles with a given fixed weight $w$, this subgroup $\text{Pic}_{w}(M)$ is isomorphic to the classical Picard group $\text{Pic}(X)$ and so $\text{Pic}_{w}(M)$ is connected. Denote by $\text{Pic}_w^d(X)$ the subvariety of $\text{Pic}_{w}(X)$ with fixed degree $d$. It is clear that any line $V$-bundle in $\text{Pic}_w^d(X)$ has the same parabolic degree; denote by $l$ the corresponding parabolic degree. We shall prove the statement in two cases, namely when $d = 0$ and when $d>0$.\\
When $d=0$, we claim that the parabolic Higgs field $\Phi$ is zero. In other words, the parabolic Higgs field of any point in $\mathcal{N}_{par}^{0,(0,w)}$ is zero. By Proposition \ref{406}, the moduli space $\mathcal{N}_{par}^{0,(0,w)}$ of local minima is isomorphic to the moduli space of pairs $(W,c)$, where $W= L \oplus L^{\vee}$, $L$ is a degree $0$ line bundle, and $c = \begin{pmatrix}
0 & 1 \\
1 & 0
\end{pmatrix}$ with respect to the above decomposition. There is a surjective continuous map $Pic_w^{0}(X) \rightarrow \mathcal{N}_{par}^{0,(0,w)}$ given by taking $L$ to the pairs $(W,c)$. Hence, the moduli space $\mathcal{N}_{par}^{0,(0,w)}$ of local minima is connected. Now we claim that the parabolic Higgs field $\Phi$ is zero. If $\Phi$ is not zero, then $L^{\vee}$ is a $\Phi$-invariant sub-line bundle of $W$. This means that $W$ is semistable, but not stable, and thus finishes the proof for the first case $d=0$.\\
In the second case $d >0$, the parabolic Higgs field $\Phi$ cannot be zero, otherwise the subbundle $L$ would violate the stability condition. Therefore, critical points should be as described in case (2) of Proposition \ref{406}. The parabolic Higgs field $\Phi$ of a critical point can be written in the form
$\Phi=\begin{pmatrix}
0 & 0 \\
\gamma & 0
\end{pmatrix},$
where $\gamma \in H^{0}\left(X, ( L^{\vee}K(D) )^2 \right)$. Thus, the subvariety $\mathcal{N}_{par}^{0,(d,w)}$ fits into the following pullback diagram
\begin{center}
\begin{tikzcd}	
	& \text{Pic}_w^d(X) \arrow[d,"\text{$\cong$}"] \\
	
	\mathcal{N}_{par}^{0,(d,w)} \arrow[r] \arrow[d, "\pi"]
	& \text{Pic}^{d}(X) \arrow[d, "\text{ $L \rightarrow (L^{\vee}K(D))^2 $ }" ] \\
	
	\text{S}^{4g-4+2s-2l}(X) \arrow[r, "\text{ $D \rightarrow [D]$ }" ]
	& \text{Pic}^{4g-4+2s-2l}(X)
\end{tikzcd}	
\end{center}
where $\pi(W,C,\Phi)=(\Phi)$. This shows that $\mathcal{N}_{par}^{0,(d,w)}$ is connected also when $d >0$.
\end{proof}

\begin{cor}\label{504}
The number of all the connected components of the form $\mathcal{M}_{par}^{0,(d,w)}$ is $(2g-2+s)2^{s}$.
\end{cor}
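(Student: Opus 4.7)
The plan is to reduce Corollary~\ref{504} to a direct combinatorial count, since Proposition~\ref{503} has already established connectedness of each individual component $\mathcal{M}_{par}^{0,(d,w)}$. What remains is to enumerate the valid topological invariants $(d,w)$.

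First I would describe the range of invariants. From Section~6, Case (2)(a), such a component arises when the Cayley partner $W=L\oplus L^{\vee}$ has trivial first Stiefel--Whitney class and the auxiliary line $V$-bundle $L$ satisfies $par\deg(L)<2g-2+s$. Here $d\in\mathbb{Z}$ and $w\in\bigoplus_{i=1}^{s}\mathbb{Z}_{2}$ record the underlying classical degree and weight of $L$, so $par\deg(L)=d+\tfrac{|w|}{2}$, with $|w|:=\sum_{i}w_{i}$. The swap $L\leftrightarrow L^{\vee}$ in the Cayley decomposition preserves $W$ (and hence the component) but sends $(d,w)\mapsto(-d-|w|,w)$, the weight being unchanged since it lives in $\mathbb{Z}_{2}$. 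I therefore restrict to representatives with $par\deg(L)\geq 0$.

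To see this rule picks exactly one representative per component, note that the involution has fixed points precisely at $d=-|w|/2$, which forces $|w|$ even and $par\deg(L)=0$. Away from $par\deg(L)=0$, orbits have size two with one element of positive and one of negative parabolic degree; at $par\deg(L)=0$ every pair is already a fixed point. Hence the range $par\deg(L)\in[0,2g-2+s)$ selects exactly one invariant per component.

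Finally I count: for each fixed $w$, the constraint $0\leq d+\tfrac{|w|}{2}<2g-2+s$ places $d$ in the half-open interval $\bigl[-\tfrac{|w|}{2},\,2g-2+s-\tfrac{|w|}{2}\bigr)$. Its length is the positive integer $2g-2+s$, and because both endpoints share the shift $|w|/2$, it contains exactly $2g-2+s$ integers regardless of the parity of $|w|$. Ranging $w$ over its $2^{s}$ possibilities yields the claimed total $(2g-2+s)\cdot 2^{s}$. The only delicate step is handling the $L\leftrightarrow L^{\vee}$ symmetry at $par\deg(L)=0$; once one notices that vanishing parabolic degree already forces a fixed point of the involution, no double counting can occur and the remainder is an elementary interval-length observation.
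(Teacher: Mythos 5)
Your proof is correct and follows essentially the same route as the paper: connectedness of each $\mathcal{M}_{par}^{0,(d,w)}$ is quoted from Proposition \ref{503}, and the count is the same enumeration of pairs $(d,w)$ subject to $0\le d+\tfrac{|w|}{2}<2g-2+s$, yielding $2g-2+s$ admissible values of $d$ for each of the $2^{s}$ weights $w$. The only difference is that you explicitly justify the restriction to non-negative parabolic degree via the $L\leftrightarrow L^{\vee}$ involution (checking that fixed points occur only at $par\deg(L)=0$, so no double counting), a normalization the paper leaves implicit rather than a genuinely different argument.
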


\begin{proof}
We calculate the number of all possible pairs $(d,w)$, where $d \in \text{Pic}(X)$ is the classical degree of the parabolic line bundle and $w$ is an element in $\bigoplus\limits_{i=1}^s \mathbb{Z}_2$; it is a combinatorial problem to determine the number of all pairs $(d,w)$. Let $k(w)$ be the corresponding weight of $w$ which is a rational number with denominator $2$. In general,
\begin{align*}
k(w)=\frac{\text{ the number of $1$'s in $w$ }}{2}.
\end{align*}
Given an integer $0 \leq k \leq s$, there are ${s \choose k}$ many choices for the weight $w$ such that $2k(w)=k$.\\
Now we fix a weight $w \in \bigoplus\limits_{i=1}^s \mathbb{Z}_2$. We calculate the number of all possible degrees $d$ depending on the parity of $2k(w)$. By the definition of parabolic degree, we have
\begin{align*}
0 \leq d+k(w)<2g-2+s.
\end{align*}
Therefore, the range for $d$ is
\begin{align*}
-k(w) \leq d < 2g-2+s-k(w).
\end{align*}
Fixing $k(w)$, the number of all possible values for $d$ is $2g-2+s$. Based on the above discussion, the number of all possible pairs $(d,w)$ is
\begin{align*}
\sum_{2k(w)=0}^{s} {s \choose 2k(w)}(2g-2+s)= (2g-2+s)2^{s}.
\end{align*}
\end{proof}

\begin{prop}\label{505}
$\mathcal{M}_{par}^{0,2g-2+s,L}$ is connected.
\end{prop}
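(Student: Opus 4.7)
The plan is to follow the Bott--Morse strategy from Section 3: by Lemma \ref{216}, the component $\mathcal{M}_{par}^{0,2g-2+s,L}$ is connected once the subspace $\mathcal{N}_{par}^{0,2g-2+s,L}$ of local minima of the Hitchin functional $f$ is connected, so the task reduces to describing these minima explicitly and verifying they form a connected set.

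First I would invoke Proposition \ref{406}(2): any polystable $(V,\beta,\gamma)$ in this component with $par\deg V = 2g-2+s$ splits as $V = L_1 \oplus L_2$, with Higgs data of block form
\[
\beta = \begin{pmatrix} 0 & 0 \\ 0 & b \end{pmatrix}, \qquad \gamma = \begin{pmatrix} 0 & c_1 \\ c_2 & 0 \end{pmatrix}.
\]
Combining this with the description preceding the proposition --- which fixes $V = N \oplus N^\vee K(D)$ for $N = K(D)^{3/2}$ built from the chosen square root $L$ --- forces the line $V$-bundle identifications $L_1 \cong L^3$ and $L_2 \cong L^{-1}$. Since $\gamma$ is an isomorphism in the maximal case, the off-diagonal entries $c_1, c_2$ are nowhere-vanishing sections of the line $V$-bundle $L_1^{-1}L_2^{-1}K(D) \cong \mathcal{O}_M$, hence $c_1, c_2 \in \mathbb{C}^*$. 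The remaining parameter is $b \in H^0(M, L_2^{-2} K(D))$, which under the $V$-bundle identification corresponds to an element of the complex vector space $H^0(M, K(D)^2)$.

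The final step is to realize $\mathcal{N}_{par}^{0,2g-2+s,L}$ as the quotient of the connected parameter space $(\mathbb{C}^*)^2 \times H^0(M, K(D)^2)$ by the residual automorphism torus of $V = L_1 \oplus L_2$ acting by rescaling the summands. Since both the parameter space and the acting group are connected and the quotient is Hausdorff, the quotient $\mathcal{N}_{par}^{0,2g-2+s,L}$ is connected. Alternatively, one could parallel the strategy used for Proposition \ref{503} and exhibit $\mathcal{N}_{par}^{0,2g-2+s,L}$ through a pullback diagram onto the symmetric product of the underlying curve, which is again connected.

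The main obstacle I expect is verifying the rigidity of the splitting $V \cong L^3 \oplus L^{-1}$ within the local minima: this requires a careful compatibility check between the $\psi$-eigenspace decomposition from Section 3.1, the Cayley partner construction, and the $V$-bundle classification via Corollary \ref{308}, particularly regarding how the prescribed monodromies at the punctures constrain the admissible choices of $L_1$ and $L_2$. Once this rigidity has been pinned down, the remainder of the argument is straightforward linear algebra. The approach parallels Gothen's treatment of the non-parabolic $\mathrm{Sp}(4,\mathbb{R})$ case in \cite{Goth}, with additional care to accommodate the rational weights through the $V$-surface $M$ and its $V$-Picard group.
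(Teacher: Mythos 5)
Your strategy is workable, but it is genuinely different from, and heavier than, the paper's argument. The paper does not pass through the minima of $f$ at all for this component: it observes that on $\mathcal{M}_{par}^{0,2g-2+s,L}$ the Cayley partner $W=L_W\oplus L_W^{\vee}$ together with its orthogonal structure $c$ is completely rigid (maximality forces the component $L_W\to L_W^{-1}\otimes K(D)^2$ of $\phi$ to be nonzero between line bundles of equal parabolic degree, hence an isomorphism, so $L_W^2\cong K(D)^2$, and the square-root label $L$ then fixes $L_W$ holomorphically), so the whole component is swept out by the linear space of Higgs fields $\phi\in H^0(\mathrm{End}(W)\otimes K(D)^2)$ compatible with $c$, namely $H^0(X,K(D)^2)\oplus H^0(X,K(D)^4)\oplus H^0(X,K(D)^2)$, which is connected. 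Thus the ``rigidity'' you single out as your main obstacle is precisely the elementary observation the paper exploits, and once you have it no Bott--Morse analysis is needed; conversely, your route (properness of $f$ plus Proposition \ref{406}(2), in the style of Propositions \ref{501} and \ref{503}) does go through, but note that the reduction ``component connected if its locus of minima is connected'' comes from the properness discussion at the start of \S 3, not from Lemma \ref{216}, which is only the criterion for recognizing a minimum.

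Two computations in your description of the minima are off, though neither affects connectedness. Since $\beta:V^{\vee}\to V\otimes K(D)$, its surviving entry maps $L_2^{\vee}$ to $L_2\otimes K(D)$, so $b\in H^0(L_2^{2}K(D))\cong H^0(\mathcal{O}_M)=\mathbb{C}$, not $H^0(L_2^{-2}K(D))\cong H^0(K(D)^2)$; this is consistent with Proposition \ref{407}, where at the maximal-degree minima $\beta$ and $\gamma$ have constant entries. Moreover $c_1$ and $c_2$ are not independent: the symmetry of $\gamma\in H^0(\mathrm{Sym}^2 V^{\vee}\otimes K(D))$ identifies them, so the minima are parameterized by a single $c\in\mathbb{C}^{*}$ and $b\in\mathbb{C}$ modulo automorphisms of $L^3\oplus L^{-1}$ --- still a connected set, so your conclusion stands, but the parameter count should be corrected.
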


\begin{proof}
In this case, the bundle $W$ is completely determined by $L$. Thus the moduli space $\mathcal{M}_{par}^{0,2g-2+s,L}$ is isomorphic to the moduli space of parabolic Higgs fields $\Phi \in H^0(\text{End}(W) \otimes K(D)^2)$ such that $\Phi$ commutes with the matrix $c=\begin{pmatrix}
0 & 1 \\
1 & 0
\end{pmatrix}$. Therefore, the Higgs field $\Phi$ (more precisely the map $\gamma$) is of the following form
\begin{center}
$\Phi=\begin{pmatrix}
\phi_{11} & \phi_{12} \\
\phi_{12} & \phi_{22}
\end{pmatrix}$.
\end{center}
Hence, the moduli space $\mathcal{M}_{par}^{0,2g-2+s,L}$ is isomorphic to the connected space
\begin{align*}
\underbrace{H^0(X,(K(D))^2)}_{\phi_{11}} \bigoplus \underbrace{H^0(X,(K(D))^4)}_{\phi_{12}} \bigoplus \underbrace{H^0(X,(K(D))^2)}_{\phi_{22}}.
\end{align*}
\end{proof}

\begin{cor}\label{506}
The number of all the connected components of the form $\mathcal{M}_{par}^{0,2g-2+s,L}$ is $2^{2g+s-1}$.
\end{cor}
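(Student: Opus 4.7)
The plan is to count the square roots of $K(D)$ in $\text{Pic}_V(M)$, since each such $L$ indexes a distinct component $\mathcal{M}_{par}^{0,2g-2+s,L}$ (the Cayley correspondence makes $L$ a topological invariant of the Higgs bundles in question). The main tool is the short exact sequence
\begin{align*}
0 \to \text{Pic}(X) \to \text{Pic}_V(M) \to \bigoplus_{i=1}^s \mathbb{Z}_2 \to 0
\end{align*}
from \S 4.2, which lets me stratify the set of square roots by the weight vector $w = (k_1,\ldots,k_s) \in \bigoplus_{i=1}^s \mathbb{Z}_2$ of $L$.

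First, I will identify which weights $w$ actually occur. Since $K(D)$ has trivial monodromy around each puncture (weight $0$), the weight of $L^2$ is automatically $2w = 0$ for any $w$, so there is no constraint from the weight of the square. The genuine constraint comes from matching the $V$-degree: by Corollary \ref{308}, $V\text{-}\deg(L) \in \mathbb{Z} + \tfrac{1}{2}\sum_i k_i$, and requiring $V\text{-}\deg(L) = \tfrac{1}{2}(2g-2+s) = g-1+\tfrac{s}{2}$ forces the parity condition $\sum_i k_i \equiv s \pmod 2$. Exactly $2^{s-1}$ weight vectors satisfy this, accounting for the $2^{s-1}$ factor in the final count.

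Next, for each admissible $w$, I claim that the set of square roots $L$ of $K(D)$ with weight $w$ is a torsor over $\text{Pic}^0(X)[2] \cong (\mathbb{Z}/2)^{2g}$: if $L_0$ and $L$ are two such square roots, then $P := L \otimes L_0^{-1}$ has weight $0$ (hence $P \in \text{Pic}(X)$), satisfies $P^{\otimes 2} \cong \mathcal{O}_X$, and has degree $0$, so $P \in \text{Pic}^0(X)[2]$. Conversely, any such $P$ produces another square root. Multiplying the two contributions yields $2^{s-1} \cdot 2^{2g} = 2^{2g+s-1}$.

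The main obstacle I expect is the nonemptiness step — producing, for each admissible weight $w$, at least one $V$-line bundle $L_0$ with $L_0^{\otimes 2} \cong K(D)$ and weight $w$. I would handle this by fixing a theta characteristic $\Theta$ on $X$ with $\Theta^{\otimes 2} \cong K$ (which exists since $2g-2$ is even) and twisting by natural half-weight orbifold line bundles on $M$ supported at those punctures $x_i$ with $k_i = 1$, correcting the underlying line bundle by $\mathcal{O}_X(-\lfloor\,\cdot\,\rfloor)$ to land in the right $V$-degree; the parity condition $\sum_i k_i \equiv s \pmod 2$ is precisely what guarantees an integer correction exists and that the resulting square is $K \otimes \mathcal{O}_X(D) = K(D)$ on the nose. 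With nonemptiness in hand, the torsor argument above closes the count.
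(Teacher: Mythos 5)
Your count is correct, and your route is genuinely different in presentation from the paper's. The paper gives no standalone proof of this corollary: the number is read off from the discussion in \S 6, case (2b) (and again in \S 9), where the components are labelled by square roots of $K(D)$ in $\operatorname{Pic}_V(M)$ and their number is identified with the order of the $2$-torsion of $\operatorname{Pic}_V(M)$, i.e.\ with $H^1_V(M,\mathbb{Z}_2)\cong\mathbb{Z}_2^{2g+s-1}$, whose rank was computed by the Mayer--Vietoris argument in \S 6. You instead stratify the square roots by their monodromy vector $w\in\bigoplus_{i=1}^s\mathbb{Z}_2$ using the exact sequence $0\to\operatorname{Pic}(X)\to\operatorname{Pic}_V(M)\to\bigoplus_i\mathbb{Z}_2\to 0$ and Corollary \ref{308}: the congruence $c_1(L)\equiv\tfrac12\sum k_i \pmod{\mathbb{Z}}$ together with $c_1(L)=g-1+\tfrac s2$ forces $\sum k_i\equiv s\pmod 2$, giving $2^{s-1}$ admissible weights, and each nonempty stratum is a torsor over $\operatorname{Pic}^0(X)[2]\cong(\mathbb{Z}/2)^{2g}$. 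This buys a more explicit, hands-on count (and makes transparent why the answer only sees the punctures with nontrivial $\mathbb{Z}_2$-isotropy, which is exactly the mechanism behind the appearance of $s_0$ in \S 9), whereas the paper's cohomological formulation is shorter and generalizes immediately to arbitrary weight types. One small point to tighten: in your nonemptiness step, twisting $\Theta\otimes\bigl(\bigotimes_{k_i=1}L_i\bigr)$ by an integral divisor of the right degree is not quite enough to get $L^2\cong K(D)$ ``on the nose''; you need a line bundle $N\in\operatorname{Pic}(X)$ with $N^2\cong\mathcal{O}_X\bigl(\sum_{k_i=0}x_i\bigr)$, which exists because this class has even degree (your parity condition) and $\operatorname{Pic}^0(X)$ is divisible. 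With that adjustment the torsor argument closes the count $2^{s-1}\cdot 2^{2g}=2^{2g+s-1}$ as claimed.
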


Combining the results of this section, we deduce an exact component count for the moduli space of maximal parabolic $\text{Sp}\left( 4,\mathbb{R} \right)$-Higgs bundles:

\begin{thm}\label{507}
The moduli space ${{\mathsf{\mathcal{M}}}_{par}^{\text{max}}}\left( \text{Sp}\left( 4,\mathbb{R} \right) \right)$ of parabolic maximal $\text{Sp}\left( 4,\mathbb{R} \right)$-Higgs bundles with all weights rational having denominator 2 over a compact Riemann surface $X$ of genus $g$ with a divisor of $s$-many distinct points on $X$, such that $2g-2+s>0$, has $(2^s+1)2^{2g+s-1}+(2g-3+s)2^{s}$ many connected components.
\end{thm}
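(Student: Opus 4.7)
The plan is to assemble the theorem from the three families of topological-invariant-labelled subvarieties identified in Section~6, using the connectedness results of Propositions~\ref{501}, \ref{503}, \ref{505} together with the counts in Corollaries~\ref{502}, \ref{504}, \ref{506}. Concretely, the stratification of $\mathcal{M}_{par}^{\max}(\mathrm{Sp}(4,\mathbb{R}))$ by the topological invariants of the Cayley partner $(W,\phi,c)$ yields a disjoint decomposition into pieces of the form $\mathcal{M}_{par}^{u,v}$ (for $0\neq u\in H^1_V(M,\mathbb{Z}_2)$ and $v\in H^2_V(M,\mathbb{Z}_2)$), $\mathcal{M}_{par}^{0,(d,w)}$ (for a pair $(d,w)$ with $d\in\mathrm{Pic}(X)$ and $w\in\bigoplus_{i=1}^s\mathbb{Z}_2$ satisfying $0\le d+k(w)<2g-2+s$), and $\mathcal{M}_{par}^{0,2g-2+s,L}$ (for $L$ a square root of $K(D)$ as a $V$-bundle).

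The first step is to observe that every stable maximal parabolic $\mathrm{Sp}(4,\mathbb{R})$-Higgs bundle falls into exactly one of these three classes, according to whether $\bigwedge^2 W\neq 0$ (giving the invariant $u\neq 0$), $\bigwedge^2 W=0$ with $par\deg(L)\neq 2g-2+s$ (giving $(d,w)$), or $\bigwedge^2 W=0$ with $par\deg(L)=2g-2+s$ (giving the square-root invariant). Invoking Propositions~\ref{501}, \ref{503} and \ref{505}, each such labelled subvariety is connected, so the total number of connected components equals the total number of distinct labels.

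The second step is the arithmetic. Corollary~\ref{502} gives $2^s(2^{2g+s-1}-1)$ labels of the first type, Corollary~\ref{504} gives $(2g-2+s)2^s$ labels of the second type, and Corollary~\ref{506} gives $2^{2g+s-1}$ labels of the third type. Adding these,
\begin{align*}
&2^s(2^{2g+s-1}-1)+(2g-2+s)2^s+2^{2g+s-1}\\
&\quad=2^{2g+2s-1}+2^{2g+s-1}+(2g-3+s)2^s\\
&\quad=(2^s+1)2^{2g+s-1}+(2g-3+s)2^s,
\end{align*}
which matches the claimed count.

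There is no real obstacle left at this stage, since the three connectedness propositions and their counts are already in place; the theorem is essentially a bookkeeping conclusion. The only point that needs a line of justification is that the three families genuinely exhaust the space of maximal objects and are pairwise disjoint, which follows from the Cayley correspondence and the dichotomy $\bigwedge^2 W=0$ versus $\bigwedge^2 W\neq 0$ recalled in Section~6 together with the stability inequality $par\deg(L)\le 2g-2+s$ used in that discussion.
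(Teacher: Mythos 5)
Your proposal is correct and matches the paper's own argument: the theorem is obtained exactly by combining the connectedness results of Propositions \ref{501}, \ref{503}, \ref{505} with the counts of Corollaries \ref{502}, \ref{504}, \ref{506} for the three types of invariants from \S 6, and your arithmetic $2^s(2^{2g+s-1}-1)+(2g-2+s)2^s+2^{2g+s-1}=(2^s+1)2^{2g+s-1}+(2g-3+s)2^s$ is the same bookkeeping the paper performs.
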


The proof that the analogous subspaces of ${{\mathsf{\mathcal{M}}}_{par}^{max}}\left( \text{Sp}\left( 2n,\mathbb{R} \right) \right)$ for $n\ge 3$ are connected is similar, thus providing the following:

\begin{thm}\label{510}
For $n \ge 3$, the moduli space  ${{\mathsf{\mathcal{M}}}_{par}^{\text{max}}}\left( \text{Sp}\left( 2n,\mathbb{R} \right) \right)$  of parabolic maximal $\text{Sp}\left( 2n,\mathbb{R} \right)$-Higgs bundles with all weights rational having denominator 2 over a compact Riemann surface $X$ of genus $g$ with a divisor of $s$-many distinct points on $X$, such that $2g-2+s>0$, has $(2^s+1)2^{2g+s-1}$ many connected components.
\end{thm}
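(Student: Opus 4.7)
The plan is to mimic the strategy employed for the rank-two case (Theorem \ref{507}): invoke Proposition \ref{209} and Lemma \ref{216} to reduce the component count of $\mathcal{M}_{par}^{\max}(\mathrm{Sp}(2n,\mathbb{R}))$ to the connectedness analysis of the local minima of the Hitchin functional $f$, classify those minima via Proposition \ref{407}, partition the minimum set according to the topological invariants catalogued in \S 6, and finally prove that each piece is connected so that the total count comes out to $(2^s+1)2^{2g+s-1}$.

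The first main step is to handle the $\beta=0$ branch. By Proposition \ref{407}(1), a polystable minimum with $\beta=0$ yields a Cayley partner of the form $(W,c)$ in which $c$ is an isomorphism, i.e.\ $W$ is an $\mathrm{O}(n,\mathbb{C})$ $V$-bundle; as described in \S 6, the relevant topological invariants are the Stiefel-Whitney-type classes $(u,v)\in H^1_V(M,\mathbb{Z}_2)\oplus H^2_V(M,\mathbb{Z}_2)$, producing $2^{2g+s-1}\cdot 2^s$ labelled pieces $\mathcal{M}_{par}^{u,v}(\mathrm{Sp}(2n,\mathbb{R}))$. For $u\ne 0$ the connectedness of each piece follows \emph{verbatim} from Proposition \ref{501}: build the \'etale double cover $\pi_u:X_u/\pi_u^{-1}(D)\to X/D$, upgrade it to a $V$-surface covering $M_{X_u}\to M$, pull back $W$ to a pair $(L,L^{-1})$ on the cover, and identify the parameter space with the Prym variety $\mathrm{Prym}(M_{X_u},M)$, whose components are enumerated by $\bigoplus_{i=1}^s \mathbb{Z}_2\cong H^2_V(M,\mathbb{Z}_2)$. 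For $u=0$ the cover degenerates, but the moduli of $\mathrm{O}(n,\mathbb{C})$ $V$-bundles of rank $n\ge 3$ with fixed $w_2=v$ remains connected by a standard orthogonal-bundle argument (this is the precise point at which the rank $\ge 3$ simplification removes the auxiliary family $\mathcal{M}_{par}^{0,(d,w)}$ that arose in the rank-two analysis from the accident $\mathrm{SO}(2,\mathbb{C})\cong\mathbb{C}^*$).

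The second main step is the $\beta\ne 0$ branch. By Proposition \ref{407}(2)--(3), every such minimum arises from a choice of square root $L$ of the line $V$-bundle $K(D)$, with the decomposition
\[V = LK(D)^{-2\lfloor n/2\rfloor}\oplus LK(D)^{-2\lfloor n/2\rfloor+2}\oplus\cdots\oplus LK(D)^{2\lfloor n/2\rfloor}\]
(or the even analogue) and with $\beta,\gamma$ forced to be the anti-diagonal matrices displayed there. This data rigidifies both $V$ and $\Phi$ up to gauge equivalence, so the critical subvariety $\mathcal{M}_{par}^{0,n(g-1+s/2),L}(\mathrm{Sp}(2n,\mathbb{R}))$ consists of a single polystable isomorphism class and is trivially connected. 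The $V$-surface admits $2^{2g+s-1}$ square roots of $K(D)$, furnishing $2^{2g+s-1}$ such components. Summing the two branches gives $2^s\cdot 2^{2g+s-1}+2^{2g+s-1}=(2^s+1)\,2^{2g+s-1}$, and Lemma \ref{216} together with the properness of $f$ (Proposition \ref{209}) promotes the connectedness of the minima to connectedness of the ambient components.

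The hard part will be the $u=0$ portion of the $\beta=0$ branch, where the Prym construction of Proposition \ref{501} collapses: one must verify directly that the moduli of $V$-orthogonal bundles of rank $n\ge 3$ with trivial first Stiefel-Whitney class and prescribed second Stiefel-Whitney class $v\in H^2_V(M,\mathbb{Z}_2)$ is irreducible, and that no secondary invariants (analogous to the degree/weight pair $(d,w)$ appearing in Proposition \ref{503}) split off new components. This is where the hypothesis $n\ge 3$ is genuinely used; with this point settled, the remainder is a straightforward adaptation of the calculations of \S 7.
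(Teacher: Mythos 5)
Your overall architecture --- Bott--Morse reduction via Proposition \ref{209} and Lemma \ref{216}, classification of the minima by Proposition \ref{407}, the two-branch enumeration of invariants from \S 6 ($2^s\cdot 2^{2g+s-1}$ pairs $(u,v)$ for $\beta=0$ plus $2^{2g+s-1}$ square roots of $K(D)$ for $\beta\neq 0$), and the final sum $(2^s+1)2^{2g+s-1}$ --- is exactly the paper's, which itself only says the proofs are ``similar'' to \S 7. Your treatment of the $\beta\neq 0$ branch is fine: by Proposition \ref{407}(2)--(3) the minimum attached to a chosen square root $L$ is a single isomorphism class, and Lemma \ref{216} together with properness promotes connectedness of the minima to connectedness of $\mathcal{M}_{par}^{0,n(g-1+\frac{s}{2}),L}(\mathrm{Sp}(2n,\mathbb{R}))$.

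There is, however, a genuine gap in the $\beta=0$ branch. You assert that for $u\neq 0$ connectedness follows ``verbatim'' from Proposition \ref{501}; that proof is specific to rank two: it uses that an $\mathrm{O}(2,\mathbb{C})$ $V$-bundle with $w_1=u\neq 0$ pulls back along the covering $M_{X_u}\to M$ to $L\oplus L^{-1}$ (the point being $\mathrm{SO}(2,\mathbb{C})\cong\mathbb{C}^*$), so that the minima are parametrized by the Prym variety $\mathrm{Prym}(M_{X_u},M)$. For $n\ge 3$ the $\beta=0$ minima are polystable orthogonal $V$-bundles $(W,c)$ of rank $n$, and the pullback to $M_{X_u}$ is merely an $\mathrm{SO}(n,\mathbb{C})$ $V$-bundle; there is no splitting into a line $V$-bundle and its inverse and no Prym description, so that step would fail as stated. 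What is actually needed --- uniformly for every pair $(u,v)$, not only for $u=0$ --- is connectedness of the moduli of polystable rank-$n$ orthogonal $V$-bundles with fixed classes $(u,v)\in H^1_V(M,\mathbb{Z}_2)\oplus H^2_V(M,\mathbb{Z}_2)$; this single statement is also precisely where $n\ge 3$ enters (for $n\ge 3$ the reduced structure group $\mathrm{SO}(n,\mathbb{C})$ is semisimple, so no degree invariant survives to produce an analogue of $\mathcal{M}_{par}^{0,(d,w)}$). You supply this neither for $u\neq 0$ (misattributed to Proposition \ref{501}) nor for $u=0$ (deferred to ``a standard orthogonal-bundle argument'' and flagged as the hard part), so the essential new input of the theorem for $n\ge 3$ is missing from the proposal, even though your decomposition and count coincide with the paper's.
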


\section{Connected Components of $\mathcal{M}^{max}_{par}(\text{Sp}\left( 2n,\mathbb{R} \right))$ with Fixed Weight $\frac{1}{2}$}
In this section we further restrict to the moduli space $\mathcal{M}^{max}_{par}(\text{Sp}\left( 2n,\mathbb{R} \right), \hat{\alpha})$ of maximal parabolic $\text{Sp}(2n,\mathbb{R})$-Higgs bundles $(V,\beta,\gamma)$ for a fixed parabolic structure $\hat{\alpha}$ on the parabolic bundle $V$ described by a trivial flag over each point $x\in D$ and weight $\frac{1}{2}$.

The maximality of $(V,\beta,\gamma)$ defines a Cayley partner $(W,c,\phi)$ with $W$ having also a fixed parabolic structure induced by the one on $V$. We now notice that among the three apparent cases $(1),(2a)$ and $(2b)$ from \S 6 there is a difference appearing only in Case $(2a)$. In this case $W = L \oplus L^{\vee}$, however the parabolic degree of $L$ is now also fixed satisfying
\begin{align*}
0 \leq par\deg(L) < 2g-2+s.
\end{align*}
The parabolic degree $par\deg(L)$ defines a topological invariant and with a similar proof as in Proposition \ref{503}, we can prove that the corresponding component of the moduli space ${{\mathsf{\mathcal{M}}}_{par}^{\text{max}}}\left( \text{Sp}\left( 4,\mathbb{R} \right), \hat{\alpha} \right)$ is connected. Therefore, we have the following corollary which verifies the prediction for the exact number of connected components from \cite{KySZ}:

\begin{cor}\label{801}
The number of connected components of the moduli space ${{\mathsf{\mathcal{M}}}_{par}^{\text{max}}}\left( \text{Sp}\left( 4,\mathbb{R} \right), \hat{\alpha} \right)$ with fixed trivial filtration and weight $\frac{1}{2}$ is
\begin{align*}
2^{2g+s-1}(2^s+1)+(2g-2+s)-2^s.
\end{align*}
\end{cor}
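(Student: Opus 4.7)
The plan is to stratify $\mathcal{M}_{par}^{\max}(\text{Sp}(4,\mathbb{R}), \hat{\alpha})$ by the Cayley correspondence and Proposition \ref{406}, then count components stratum-by-stratum using the observation, already highlighted in the discussion preceding the statement, that fixing $\hat{\alpha}$ only affects Case $(2a)$. As in \S 6, every poly-stable maximal $(V,\beta,\gamma)$ admits a Cayley partner $(W, c, \phi)$ with $W = V \otimes L_0^{-1}$; a local minimum of the Hitchin functional satisfies one of the three alternatives of Proposition \ref{406}; and fixing $\hat{\alpha}$ descends to a fixed parabolic structure on $W$. I then need to identify which topological invariants of $W$ remain free after $\hat{\alpha}$ is imposed.

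First I would handle the two unaffected cases. For Case $(1)$ (with $\beta = 0$), the connectedness proof of Proposition \ref{501} depends only on the \'{e}tale double cover $X_u / \pi_u^{-1}(D) \to X/D$ determined by $u \in H^1_V(M,\mathbb{Z}_2)\setminus\{0\}$, on the $V$-Prym exact sequence
\[
0 \to \text{Prym}(X_u,X) \to \text{Prym}(M_{X_u},M) \to \bigoplus_{i=1}^s \mathbb{Z}_2 \to 0,
\]
and on the identification of $H^2_V(M,\mathbb{Z}_2)$ with $\bigoplus_{i=1}^s \mathbb{Z}_2$. None of these ingredients uses the freedom to vary the parabolic structure on $V$, so the count $2^s(2^{2g+s-1}-1)$ of Corollary \ref{502} persists. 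For Case $(2b)$, the identity $L^2 \cong K(D)$ reduces the moduli to a choice of square root of $K(D)$ together with holomorphic sections $\phi_{11}, \phi_{12}, \phi_{22}$ as in Proposition \ref{505}, contributing $2^{2g+s-1}$ connected components.

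The crux is Case $(2a)$. Here $W = L \oplus L^{\vee}$ with $0 \le par\deg L < 2g-2+s$. In the original setting, the $V$-Picard sequence $0 \to \text{Pic}(X) \to \text{Pic}_V(M) \to \bigoplus_{i=1}^s \mathbb{Z}_2 \to 0$ allowed the weight component $w \in \bigoplus_{i=1}^s \mathbb{Z}_2$ to vary freely, producing the factor of $2^s$ in Corollary \ref{504}. Once $\hat{\alpha}$ is fixed, however, the weights of $V$ together with the weight of the square root $L_0$ determine the weights of $W = V \otimes L_0^{-1}$ at each puncture, and the splitting $W = L \oplus L^{\vee}$ forces each summand to carry a prescribed weight. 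Only the classical degree $d = \deg L$ remains free as a discrete invariant, ranging over the $2g-2+s$ integers compatible with $0 \le par\deg L < 2g-2+s$. For each such $d$, connectedness follows from the pullback-diagram argument of Proposition \ref{503}: when $d = 0$ the stability hypothesis forces $\Phi = 0$ and the stratum is a continuous image of $\text{Pic}_w^0(X)$, while for $d > 0$ the stratum sits in a pullback over $\text{Pic}^{4g-4+2s-2d}(X)$ with irreducible fibers coming from $S^{4g-4+2s-2d}(X)$.

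Summing the three strata,
\[
2^s(2^{2g+s-1}-1) + 2^{2g+s-1} + (2g-2+s) = (2^s+1)2^{2g+s-1} + (2g-2+s) - 2^s,
\]
which is the asserted count. The main obstacle will be the rigidity assertion in Case $(2a)$: one must verify that fixing $\hat{\alpha}$ (trivial flag, weight $\frac{1}{2}$) genuinely pins down the weight component of the pair $(d,w)$ in the $V$-Picard decomposition, rather than leaving a residual $\mathbb{Z}_2$-freedom at each puncture. A direct local inspection of the parabolic tensor product $V \otimes L_0^{-1}$ near each $x \in D$, together with the constraint that $L$ and $L^{\vee}$ be parabolic subbundles of a $W$ with the prescribed weights, should close this gap and make the counting immediate.
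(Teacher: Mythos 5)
Your proposal is correct and follows essentially the same route as the paper: stratify via the Cayley partner and Proposition \ref{406}, observe that fixing $\hat{\alpha}$ leaves Cases $(1)$ and $(2b)$ untouched (counts $2^s(2^{2g+s-1}-1)$ and $2^{2g+s-1}$ from Corollaries \ref{502} and \ref{506}), and note that in Case $(2a)$ the weight component is pinned down so only the $2g-2+s$ admissible degrees of $L$ remain, with connectedness as in Propositions \ref{501}, \ref{503}, \ref{505}. The "rigidity" point you flag at the end is exactly what the paper asserts (tersely) when it says the parabolic degree of $L$ becomes the only remaining invariant, so your sketch closes it in the same spirit.
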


\begin{rem}\label{802}
The number of connected components is the same for any parabolic structure $\alpha$ in which all the weights are rational numbers with denominator 2, in other words, the above component count does not depend on the fixed filtration of the parabolic bundle.
\end{rem}

\section{The Case of rational Weights}

We can analogously consider maximal $\text{Sp}(2n,\mathbb{R})$-Higgs bundles for a more general choice of weights.
\begin{defn}\label{901}
The parabolic $\text{Sp}(2n,\mathbb{R})$-Higgs bundle $(V,\beta,\gamma)$ with \textit{weight type of $(m_i)_{1\le i\le s}$} is defined to be the parabolic Higgs bundle with $(\beta,\gamma)$ as in Definition \ref{401}, and the weight $\alpha_{i,j}$ at each point $x_i\in D$ in the parabolic structure of $V$ is an integral multiple of $\frac{1}{m_i}$ for $1 \leq i \leq s$, $1 \leq j \leq n$.
\end{defn}
Since the proof for maximality of the Toledo invariant does not depend on the parabolic structure, we still have $|par\deg V|\le n\left(g-1+\frac{s}{2}\right)$. So when $par\deg V=n\left(g-1+\frac{s}{2}\right)$, we may still call $V$ the \textit{maximal} parabolic $\text{Sp}(2n,\mathbb{R})$-Higgs bundle with weight type of $(m_i)_{1\le i\le s}$. The moduli space of polystable maximal parabolic $\text{Sp}(2n,\mathbb{R})$-Higgs bundles with weight type of $(m_i)_{1\le i\le s}$ is denoted by
$$\mathcal{M}_{par,(m_1,\ldots,m_s)}^{\text{max}}(\text{Sp}(2n,\mathbb{R})).$$

With respect to the data $(X,D,(m_i)_{1 \leq i \leq s})$, we can construct the $V$-surface $M$, which is given by gluing two charts $U_1$ and $U_2$, where $U_1= X \backslash \{x_1,...,x_s\}$ and $U_2 = \coprod_{i=1}^s D / \mathbb{Z}_{m_i}$. Then, we define $M_V$ as follows
\begin{align*}
M_V=V_1 \bigcup V_2, \quad V_1 = X \backslash \{x_1,...,x_s\}, \quad V_2 = \coprod_{i=1}^s D\times_{\mathbb{Z}_m} E\mathbb{Z}_m.
\end{align*}
This construction is similar to what we did in \S 6, and the only thing we change is the weight $m_i$.

By the Mayer-Vietoris sequence, we have
\begin{align*}
0 \rightarrow H^0(M_V,\mathbb{Z}_2) \rightarrow H^0(V_1,\mathbb{Z}_2) \bigoplus H^{0}(V_2,\mathbb{Z}_2) \rightarrow H^0(V_1\bigcap V_2,\mathbb{Z}_2)\\
\xrightarrow[]{j_1}  H^1(M_V,\mathbb{Z}_2) \rightarrow H^1(V_1,\mathbb{Z}_2) \bigoplus H^{1}(V_2,\mathbb{Z}_2) \rightarrow H^1(V_1\bigcap V_2,\mathbb{Z}_2)\\
\xrightarrow[]{j_2} H^2(M_V,\mathbb{Z}_2) \rightarrow H^2(V_1,\mathbb{Z}_2) \bigoplus H^{2}(V_2,\mathbb{Z}_2) \rightarrow H^2(V_1\bigcap V_2,\mathbb{Z}_2).\\
\end{align*}
Note that $V_1=X \backslash \{x_1,...,x_s\}$ and $V_1\bigcap V_2=\prod_{i=1}^s S^1$. We have
\begin{align*}
& \text{rk}(H^0(V_1\bigcap V_2))=s, \quad \text{rk}(H^0(V_1))=1, \\
& \text{rk}(H^1(V_1\bigcap V_2))=s, \quad \text{rk}(H^1(V_1))=2g+s-1, \\
& \text{rk}(H^2(V_1\bigcap V_2))=0, \quad \text{rk}(H^2(V_1))=0.
\end{align*}

Now, if we want to calculate the $\mathbb{Z}_2$-cohomology group of $M_V$, we have to figure out the cohomology group $H^i(V_2,\mathbb{Z}_2)$ for $0 \leq i \leq 2$, which is equivalent to calculate the $\mathbb{Z}_2$-cohomology group of $D\times_{\mathbb{Z}_m}E\mathbb{Z}_m$. We use the Leray spectral sequence and the group cohomology of $\mathbb{Z}_m$ to calculate $H^i(D\times_{\mathbb{Z}_m}E\mathbb{Z}_m,\mathbb{Z}_2)$. We have
\begin{align*}
H^0(D\times_{\mathbb{Z}_m}E\mathbb{Z}_m,\mathbb{Z}_2)=\mathbb{Z}_2,
\end{align*}

\begin{align*}
H^1(D\times_{\mathbb{Z}_m}E\mathbb{Z}_m,\mathbb{Z}_2)=m-\mbox{torsion submodule of }\mathbb{Z}_2= \left\{
\begin{array}{ll}\mathbb{Z}_2 &\mbox{ if }m\mbox{ is even}\\ 0 & \mbox{ if }m\mbox{ is odd} \end{array}
\right .
\end{align*}

\begin{align*}
H^2(D\times_{\mathbb{Z}_m}E\mathbb{Z}_m,\mathbb{Z}_2)=\mathbb{Z}_2/m\mathbb{Z}_2= \left\{
\begin{array}{ll}\mathbb{Z}_2 &\mbox{ if }m\mbox{ is even}\\ 0 & \mbox{ if }m\mbox{ is odd} \end{array}
\right.
\end{align*}

\begin{rem}
Recall that, in \S 6, the coefficient in the group cohomology is $\mathbb{Z}_2$, which coincides with the group $\mathbb{Z}_2$ acting around punctures in the orbifold $M$. Indeed, the reason why we take $\mathbb{Z}_2$ as the coefficient in the group cohomology comes from the discussion of the various cases in \S 6. We consider the real structure of the bundle $W$, and then take the Stiefel-Whitney classes as the topological invariants of $W$. On the other hand, the $\mathbb{Z}_2$-action around punctures comes from the assumption that all weights in the parabolic structure can be written as a fraction with denominator $2$ (see Definition \ref{401} and \S 6).

Now we come back to the setup in this section. In Definition \ref{901}, the weights around the puncture $x_i$ can be written as a fraction with denominator $m_i$. This property determines the group $\mathbb{Z}_{m_i}$, which is acting on the puncture in the corresponding orbifold. On the other hand, the coefficient $\mathbb{Z}_2$ in the cohomology still comes from a similar discussion as in \S 6 (see Case (1)).
\end{rem}

Denote by $s_0$ the number of the even $m_i$'s in the collection $(m_i)_{1\le i\le s}$ and by $s_1$ the number of the odd $m_i$'s, so clearly $s=s_0+s_1$. From a similar Mayer-Vietoris sequence as the one considered in \S 7 of \cite{KySZ}, it follows that the corresponding $V$-cohomology for the corresponding $V$-surface $M_{V}$ has rank
\begin{align*}
\text{rk}(H^1(M_V,\mathbb{Z}_2))=2g+s_0-1;\quad \text{rk}(H^2(M_V,\mathbb{Z}_2))=s_0.
\end{align*}
Topological invariants for elements in $\mathcal{M}_{par,(m_1,\ldots,m_s)}^{\text{max}}(\text{Sp}(2n,\mathbb{R}))$ can be similarly obtained, simply replacing $s$ by $s_0$ in \S 6.

For the case of $\mathcal{M}_{par,(m_1,\ldots,m_s)}^{\text{max}}(\text{Sp}(4,\mathbb{R}))$:
\begin{enumerate}
\item If $\wedge^2 W\not=0$, the structure group is $\text{O}(2,\mathbb{C})$ and we still have $0\not=u\in H^1_V(M,\mathbb{Z}_2)$ and $v\in H^2_V(M,\mathbb{Z}_2)$, thus giving $2^{s_0}(2^{2g+s_0-1}-1)$ many different pairs $(u,v)$. Denote by $\mathcal{M}_{par,(m_1,\ldots,m_s)}^{(u,v)}$ the moduli space of pairs with fixed $(u,v)$.
\item If $\wedge^2 W=0$, then we still have $W=L\oplus L^{\vee}$ and $par\deg(L)\le 2g-2+s$.
\begin{enumerate}
\item If $par\deg(L)\not= 2g-2+s$, we need to describe all possible positive degrees less than $2g-2+s$. Note that now the exact sequence of the Picard V-group is
    \[0\to \text{Pic}(X)\to \text{Pic}_V(M)\to \bigoplus_{i=1}^s \mathbb{Z}_{m_i}\to 0,\]
while for each choice of an element $(w_i)\in \bigoplus_{i=1}^s \mathbb{Z}_{m_i}$ we have an integral degree $d$ such that $d+\sum \frac{w_i}{m_i}<2g-2+s$. These form the moduli space $\mathcal{M}_{par,(m_1,\ldots,m_s)}^{0,(d,w)}$.
\item If $par\deg L=2g-2+s$, then $L^2=K(D)^2$, and the number of square roots of $K(D)$ is similarly the rank of 2-torsion in $H^1(M_V,\mathbb{Z})$, which is $2^{2g+s_0-1}$ as expected. Denote the pairs in this case by $\mathcal{M}_{par,(m_1,\ldots,m_s)}^{0,2g-2+s_0,L}$.\\
\end{enumerate}
\end{enumerate}
For the case of $\mathcal{M}_{par,(m_1,\ldots,m_s)}^{\text{max}}(\text{Sp}(2n,\mathbb{R}))$, we similarly get:
\begin{enumerate}
\item If $\beta=0$, then our moduli spaces are parametrized by $u\in H_V^1(M,\mathbb{Z}_2)$ and $v\in H_V^2(M,\mathbb{Z}_2)$.
\item If $\beta\not=0$ then there are $2^{2g+s_0-1}$ choices of square roots of $K(D)$.
\end{enumerate}
The only case for which the weight with odd monodromy does contribute is that of $\text{Sp}(4,\mathbb{R})$, when $W=L\oplus L^{\vee}$, with $\deg L\not= 2g-2+s$. In this situation, we can first choose the number $(k_1,\ldots, k_s)$, and $d$ must satisfy the inequality
\[-\sum_{i=1}^s\frac{k_i}{m_i}\le d < 2g-1+s-\sum_{i=1}^s\frac{k_i}{m_i}.\]
Therefore, there are $2g-1+s$ choices for $d$, for any choice of $(k_1,\ldots,k_s)$. The total number of such components is thus
$$(2g-1+s)\prod_{i=1}^s m_i=(2g-1+s_0+s_1)\prod_{i=1}^s m_i.$$

From Propositions \ref{501}, \ref{503}, \ref{505}, the components $\mathcal{M}_{par,(m_1,\ldots,m_s)}^{(u,v)}$, $\mathcal{M}_{par,(m_1,\ldots,m_s)}^{0,(d,w)}$, $\mathcal{M}_{par,(m_1,\ldots,m_s)}^{0,2g-2+s_0,L}$ are all connected.

In conclusion, we have the following theorems

\begin{thm}\label{903}
Let $X$ be a compact Riemann surface of genus $g$ with a divisor $D$ of $s$-many distinct points on $X$, such that $2g-2+s>0$. Then, the moduli space of poly-stable maximal parabolic $\text{Sp}(4,\mathbb{R})$-Higgs bundles on $(X,D)$ with weight type $(m_i)_{1\le i\le s}$ has $$(2^{s_0}+1)2^{2g+s_0-1}-2^{s_0}+(2g-2+s)\prod_{i=1}^s m_i$$ connected components, where $s_0$ is the number of the even $m_{i}$ in the collection $(m_i)_{1\le i\le s}$.
\end{thm}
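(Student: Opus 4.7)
The plan is to adapt the component-counting scheme for $\mathcal{M}_{par}^{\text{max}}(\text{Sp}(4,\mathbb{R}))$ from Theorem \ref{507} to this more general weight type. As in \S 6, I pass to the Cayley partner $(W,c,\phi)$ of a maximal polystable $\text{Sp}(4,\mathbb{R})$-Higgs bundle, with $W$ an orthogonal $V$-bundle on the $V$-surface $M$ now built from the data $(X,D,(m_i)_{1\le i\le s})$. Case analysis proceeds exactly as before: either $\wedge^2 W\ne 0$ (Case 1), or $\wedge^2 W=0$ with $W=L\oplus L^\vee$ and $0\le \operatorname{par}\deg L\le 2g-2+s$, further subdivided by whether equality holds (Case 2b) or not (Case 2a). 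The only change is that the $V$-cohomology groups now have ranks $\operatorname{rk}H^1_V(M,\mathbb{Z}_2)=2g+s_0-1$ and $\operatorname{rk}H^2_V(M,\mathbb{Z}_2)=s_0$, where $s_0$ counts the even $m_i$'s; the odd ones contribute trivially to $\mathbb{Z}_2$-group cohomology, which is already computed above the theorem statement.

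First I would verify connectedness of each fixed-invariant stratum. For Case 1, the pairs $(u,v)\in H^1_V(M,\mathbb{Z}_2)\times H^2_V(M,\mathbb{Z}_2)$ with $u\ne 0$ exhaust the topological classes and, by the argument of Proposition \ref{501}, each stratum $\mathcal{M}_{par,(m_i)}^{(u,v)}$ is connected via the $V$-Prym variety of the étale double $V$-cover determined by $u$; the short exact sequence
\begin{align*}
0\to \operatorname{Prym}(X_u,X)\to \operatorname{Prym}(M_{X_u},M)\to \bigoplus_{i=1}^{s_0}\mathbb{Z}_2\to 0
\end{align*}
plus the identification $H^2_V(M,\mathbb{Z}_2)\cong \bigoplus_{i=1}^{s_0}\mathbb{Z}_2$ carries through verbatim. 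For Case 2a, a strict copy of Proposition \ref{503} applies: fixing a pair $(d,w)\in \mathbb{Z}\times\bigoplus_{i=1}^s\mathbb{Z}_{m_i}$ prescribed by the short exact sequence $0\to\operatorname{Pic}(X)\to\operatorname{Pic}_V(M)\to\bigoplus\mathbb{Z}_{m_i}\to 0$, one uses the same pullback diagram involving the Abel–Jacobi map to $\operatorname{Pic}^{4g-4+2s-2l}(X)$ to obtain connectedness. For Case 2b, the argument of Proposition \ref{505} gives connectedness once $L$ is fixed, since the space is an affine fibre of spaces of sections of $K(D)^{2i}$.

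Next I would count. Case 1 contributes $2^{s_0}(2^{2g+s_0-1}-1)$ components. Case 2b contributes the number of square roots of $K(D)$ as a $V$-bundle, namely the 2-torsion in $\operatorname{Pic}_V(M)$ with odd $m_i$ components forced to be trivial, giving $2^{2g+s_0-1}$. For Case 2a the combinatorial count requires care: for each choice of $(k_1,\ldots,k_s)\in\bigoplus_{i=1}^s \mathbb{Z}_{m_i}$ the integral degree $d$ must satisfy $-\sum k_i/m_i \le d < 2g-1+s-\sum k_i/m_i$, which gives $2g-1+s$ admissible integers; subtracting the single boundary case $\operatorname{par}\deg L=2g-2+s$ already counted in 2b yields $2g-2+s$ values per choice of $(k_1,\ldots,k_s)$, hence $(2g-2+s)\prod_{i=1}^s m_i$ components. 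Summing
\begin{align*}
2^{s_0}(2^{2g+s_0-1}-1)+2^{2g+s_0-1}+(2g-2+s)\prod_{i=1}^s m_i=(2^{s_0}+1)2^{2g+s_0-1}-2^{s_0}+(2g-2+s)\prod_{i=1}^s m_i
\end{align*}
gives the claimed count. The main obstacle is the Case 2a bookkeeping: one must be sure that different pairs $(d,w)$ always give genuinely different topological components (not merely different labels), which follows from the injectivity part of Corollary \ref{308}, and that the boundary stratum subtracted for each $(k_i)$ really corresponds to the unique element of Case 2b counted separately, so that no double counting or omission occurs. Once this is checked, the proof assembles from Propositions \ref{501}, \ref{503}, \ref{505} applied with $s$ replaced by $s_0$ in the topological arguments and the full $(m_i)$ retained in the combinatorial enumeration of Case 2a.
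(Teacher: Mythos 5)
Your proposal follows essentially the same route as the paper: pass to the Cayley partner, use the same three strata ($\wedge^2W\neq 0$; $W=L\oplus L^\vee$ with $\operatorname{par}\deg L<2g-2+s$; $\operatorname{par}\deg L=2g-2+s$), prove connectedness by the arguments of Propositions \ref{501}, \ref{503}, \ref{505} with $s$ replaced by $s_0$ in the $\mathbb{Z}_2$-cohomological data, and enumerate the invariants $(u,v)$, $(d,w)\in\mathbb{Z}\times\bigoplus_i\mathbb{Z}_{m_i}$ and the square roots of $K(D)$, arriving at the stated total. One small caveat on the Case 2a bookkeeping: it is cleaner to impose $0\le d+\sum_i k_i/m_i<2g-2+s$ directly, which yields exactly $2g-2+s$ integers $d$ for every $(k_1,\ldots,k_s)$; your device of first allowing $\operatorname{par}\deg L<2g-1+s$ and then subtracting one value is numerically correct, but the excluded value coincides with the Case 2b boundary only when $\sum_i k_i/m_i\in\mathbb{Z}$, and otherwise it is simply a parabolic degree excluded by the stability bound $\operatorname{par}\deg L\le 2g-2+s$ (note also that the in-text computation in \S 9 of the paper claims $2g-1+s$ choices per $(k_i)$, an internal slip relative to the theorem; your count is the one consistent with the statement).
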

The case when $n \ge 3$ is simpler:
\begin{thm}\label{904}
Let $X$ be a compact Riemann surface of genus $g$ with a divisor $D$ of $s$-many distinct points on $X$, such that $2g-2+s>0$. Then, for $n \ge 3$ the moduli space of poly-stable maximal parabolic $\text{Sp}(2n,\mathbb{R})$-Higgs bundles on $(X,D)$ with weight type $(m_i)_{1\le i\le s}$ has $$(2^{s_0}+1)2^{2g+s_0-1}$$ connected components, where $s_0$ is the number of the even $m_{i}$ in the collection $(m_i)_{1\le i\le s}$.
\end{thm}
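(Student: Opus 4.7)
The plan is to proceed in direct parallel with the proof of Theorem \ref{510}, promoting the weight-denominator-$2$ analysis of \S 6--\S 7 to arbitrary weight type $(m_i)_{1\le i\le s}$; the point is that for $n\ge 3$ the Cayley partner analysis already eliminates the troublesome case (2a) of $\text{Sp}(4,\mathbb{R})$, so the only new ingredient needed is the $V$-cohomology computation performed at the start of \S 9. First, I would appeal to the properness of the Hitchin functional $f$ together with Proposition \ref{209} to reduce the component count to a count of the connected components of the local minima locus, and then invoke Proposition \ref{407} to stratify the local minima into the $\beta=0$ stratum and the $\beta\neq 0$ stratum (in which either odd-$n$ Case (2) or even-$n$ Case (3) of Proposition \ref{407} applies).

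Next, I would construct the $V$-surface $M$ from the data $(X,D,(m_i)_{1\le i\le s})$ as in \S 9 and recall from the Mayer--Vietoris and group-cohomology computation there that $H^1_V(M,\mathbb{Z}_2)\cong\mathbb{Z}_2^{2g+s_0-1}$ and $H^2_V(M,\mathbb{Z}_2)\cong\mathbb{Z}_2^{s_0}$, since an odd $m_i$ contributes trivial group cohomology with $\mathbb{Z}_2$ coefficients. On the $\beta=0$ stratum the Cayley partner reduces to $(W,c)$ with $W$ a rank-$n$ orthogonal $V$-bundle, so its topological type is classified by the pair of Stiefel--Whitney classes $(u,v)\in H^1_V(M,\mathbb{Z}_2)\oplus H^2_V(M,\mathbb{Z}_2)$, giving $2^{s_0}\cdot 2^{2g+s_0-1}$ possible values. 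On the $\beta\neq 0$ stratum one must fix a square root $L$ of the $V$-bundle $K(D)$, and such square roots are parametrised by the $2$-torsion of $H^1_V(M,\mathbb{Z})$, which has cardinality $2^{2g+s_0-1}$.

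The heart of the argument is then connectedness of each of these strata. For the $(u,v)$-components with $u\neq 0$ I would reproduce the proof of Proposition \ref{501} verbatim: lift to the étale double cover of $X\setminus D$ determined by $u\in\operatorname{Hom}(\pi_1(X\setminus D),\mathbb{Z}_2)$, extend to a $V$-cover $M_{X_u}\to M$, and identify the moduli with a component of the $V$-Prym variety $\operatorname{Prym}(M_{X_u},M)$, which sits in a short exact sequence
\begin{equation*}
0\to\operatorname{Prym}(X_u,X)\to\operatorname{Prym}(M_{X_u},M)\to\bigoplus_{i=1}^{s_0}\mathbb{Z}_2\to 0,
\end{equation*}
the connected components of the right-hand factor being precisely parametrised by $v\in H^2_V(M,\mathbb{Z}_2)$. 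For $u=0$ the cover is trivial and $W$ splits as $L\oplus\cdots$ with each stability-forced factor smoothly varying over the appropriate Picard component, and for the $\beta\neq 0$ stratum one repeats the proof of Proposition \ref{505}: once $L$ is fixed, the bundle $W$ is rigid and the moduli becomes an affine space of Higgs fields $\phi\in H^0(\operatorname{End}(W)\otimes K(D)^2)$ commuting with $c$, which is visibly connected. Summing the two strata gives
\begin{equation*}
2^{s_0}\cdot 2^{2g+s_0-1}+2^{2g+s_0-1}=(2^{s_0}+1)\,2^{2g+s_0-1}.
\end{equation*}

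The main obstacle I anticipate is verifying that no additional components arise from mixed-parity weights in the $n\ge 3$ setting: one must check that the odd-$m_i$ punctures, although contributing a nonzero factor $\prod m_i$ in the Picard $V$-group, do not index distinct topological types of the Cayley orthogonal $V$-bundle $W$, because the relevant obstruction lies in $H^\ast_V(M,\mathbb{Z}_2)$ where only the even $m_i$ survive. This is exactly what forces the extra $(2g-2+s)\prod m_i$ term present in Theorem \ref{903} to disappear for $n\ge 3$: for $\text{Sp}(4,\mathbb{R})$ the Case (2a) of $\wedge^2 W=0$ with $par\deg L<2g-2+s$ produced additional components parametrised by pairs $(d,w)\in\mathbb{Z}\times\bigoplus\mathbb{Z}_{m_i}$, but for $n\ge 3$ Proposition \ref{407} shows no analogue of Case (2a) exists, so the odd-$m_i$ contribution drops out and the final count depends only on $s_0$.
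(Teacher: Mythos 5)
Your proposal follows essentially the same route as the paper: build the $V$-surface from $(X,D,(m_i))$, compute $H^1_V(M,\mathbb{Z}_2)\cong\mathbb{Z}_2^{2g+s_0-1}$ and $H^2_V(M,\mathbb{Z}_2)\cong\mathbb{Z}_2^{s_0}$ so that only even $m_i$ contribute, enumerate the invariants in the $\beta=0$ and $\beta\neq 0$ strata of Proposition \ref{407}, and invoke the connectedness arguments of Propositions \ref{501}, \ref{503} and \ref{505}, exactly as in \S 9. Your closing observation that the $(2g-2+s)\prod_i m_i$ term disappears for $n\ge 3$ because no analogue of Case (2a) exists is also precisely the paper's reasoning.
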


Before we move on to the next Corollary, we introduce the \emph{reduced} and \emph{non-reduced parabolic structure} $\alpha$ with respect to the given weight type $(m_i)_{1\le i\le s}$. By definition, the parabolic structure is uniquely determined by the weights over each puncture $x \in D$, thus let $\alpha=(\alpha_{ij})_{1 \leq i \leq s,1 \leq j \leq n}$, where $(\alpha_{ij})_{1 \leq j \leq n}$ is the set of weights over the puncture $x_i$ with the same denominator $m_i$. Let $\alpha_{ij}=\frac{k_{ij}}{m_i}$, where $k_{ij}$ is a positive integer. For any $1 \leq i \leq s$, if there is at least one enumerator $k_{ij}$ such that $k_{ij}$ and $m_i$ are co-prime, the parabolic structure $\alpha$ is called \emph{non-reduced}. Otherwise, the parabolic structure $\alpha$ is called \emph{reducible}.

By using the same arguments as in \S 8, we also obtain
\begin{cor}\label{905}
Let $X$ be a compact Riemann surface of genus $g$ with a divisor $D$ of $s$-many distinct points on $X$, such that $2g-2+s>0$. Then, the moduli space of polystable maximal parabolic $\text{Sp}(4,\mathbb{R})$-Higgs bundles on $(X,D)$ with any non-reduced parabolic structure $\alpha$ has $$(2^{s_0}+1)2^{2g+s_0-1}-2^{s_0}+(2g-2+s)$$ connected components, where $s_0$ is the number of the even $m_{i}$ in the collection $(m_i)_{1\le i\le s}$.
\end{cor}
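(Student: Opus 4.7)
The plan is to mimic the proof of Theorem \ref{903} line by line, tracking where and how the passage from an arbitrary weight type $(m_i)_{1 \le i \le s}$ to a \emph{fixed} non-reduced parabolic structure $\alpha$ collapses the counting. As in Theorems \ref{903} and \ref{904}, I would classify poly-stable maximal parabolic $\mathrm{Sp}(4,\mathbb{R})$-Higgs bundles $(V,\beta,\gamma)$ via their Cayley partner $(W,c,\phi)$ and then stratify the moduli space by topological invariants, invoking Propositions \ref{501}, \ref{503}, and \ref{505} (or rather their generalized weight-type versions from Section 9) to establish that each stratum is connected.

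First I would observe that fixing $\alpha$ does not alter the underlying $V$-surface $M$ determined by the data $(X,D,(m_i))$, and hence does not alter the $V$-cohomology ranks $\mathrm{rk}(H^1_V(M,\mathbb{Z}_2))=2g+s_0-1$ and $\mathrm{rk}(H^2_V(M,\mathbb{Z}_2))=s_0$, which depend only on $s_0$. Therefore the argument used in Theorem \ref{903} for the $\beta=0$ case (Cayley partner with $\wedge^{2}W\neq 0$, structure group $\mathrm{O}(2,\mathbb{C})$, invariants $(u,v)\in (H^1_V(M,\mathbb{Z}_2)\setminus\{0\})\times H^2_V(M,\mathbb{Z}_2)$) carries over verbatim and contributes $2^{s_0}(2^{2g+s_0-1}-1)$ connected components, while the ``maximal $L$'' case (where $L^{2}\cong K(D)^{2}$ and square roots of $K(D)$ parametrize components) contributes $2^{2g+s_0-1}$ connected components. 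Summing these two yields $(2^{s_0}+1)2^{2g+s_0-1}-2^{s_0}$.

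The only place the argument of Theorem \ref{903} really needs to be modified is the intermediate case $W=L\oplus L^{\vee}$ with $0\le \mathrm{par}\deg(L)<2g-2+s$. In that theorem the monodromy $(w_i)\in\bigoplus_{i=1}^{s}\mathbb{Z}_{m_i}$ of the parabolic line subbundle $L$ was unconstrained, producing the factor $\prod_{i=1}^{s}m_i$. With the parabolic structure $\alpha$ on $V$ now fixed and non-reduced, the induced parabolic structure on $W=V\otimes L_{0}^{-1}$ is also fixed, and in the decomposition $W=L\oplus L^{\vee}$ the weight of $L$ at each puncture $x_i$ must come from the (fixed) set of weights of $W$ at $x_i$, subject to the parabolic duality constraint $\beta_{L^\vee}(x_i)=1-\beta_{L}(x_i)$. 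The non-reducedness assumption, which guarantees at least one enumerator $k_{ij}$ coprime to $m_i$ at every puncture, then uniquely pins down the class of the monodromy $w_i\in\mathbb{Z}_{m_i}$ (swapping $L\leftrightarrow L^{\vee}$ yields the same isomorphism class). Consequently only the integral degree $d$ of $L$ remains free, and the stability bound $0\le d+\sum_i w_i/m_i<2g-2+s$ produces exactly $2g-2+s$ admissible values, giving $2g-2+s$ connected components in place of $(2g-2+s)\prod m_i$.

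The main obstacle is precisely the puncture-by-puncture verification that non-reducedness forces a unique admissible $(w_i)$; this is an arithmetic check using that $\gcd(k_{ij},m_i)=1$ distinguishes the corresponding weight from the others in the parabolic duality relation. Once this is in hand, connectedness of each individual stratum follows from Proposition \ref{503}, whose pullback-diagram argument (expressing $\mathcal{N}_{par}^{0,(d,w)}$ as a fibration over a symmetric product of $X$ via $L\mapsto(L^{\vee}K(D))^{2}$) goes through unchanged in the generalized weight-type setting. Assembling the three contributions yields
\[
(2^{s_0}+1)2^{2g+s_0-1}-2^{s_0}+(2g-2+s),
\]
as claimed. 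Independence of the count on the particular choice of non-reduced $\alpha$ (in the spirit of Remark \ref{802}) is automatic, since each step above depends only on $(m_i)_{1\le i\le s}$ and not on the specific enumerators $k_{ij}$.
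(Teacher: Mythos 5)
Your overall strategy coincides with the paper's: the paper obtains Corollary \ref{905} by transporting the three-stratum analysis of Theorem \ref{903} to a fixed parabolic structure exactly as in \S 8 (Corollary \ref{801}), keeping the contributions $2^{s_0}(2^{2g+s_0-1}-1)$ and $2^{2g+s_0-1}$ from the cases $\wedge^2W\neq 0$ and $\mathrm{par}\deg(L)=2g-2+s$, reducing the intermediate case $W=L\oplus L^{\vee}$ to the single invariant $\mathrm{par}\deg(L)$ with $2g-2+s$ admissible values, and getting connectedness of each stratum from Propositions \ref{501}, \ref{503}, \ref{505}. Up to that skeleton you and the paper agree.

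The gap is precisely at the step you single out as the main obstacle: the claim that non-reducedness uniquely pins down the monodromy vector $(w_i)$ of $L$. The mechanism you sketch does not establish this. Non-reducedness only asserts that at each puncture some numerator $k_{ij}$ is coprime to $m_i$; it does not force the two induced weights of $W$ at $x_i$ to coincide. For instance $m_i=5$ with weights $\tfrac{1}{5},\tfrac{4}{5}$ is non-reduced and compatible with the self-duality $W\cong W^{\vee}$ imposed by maximality, and then both assignments of a weight to $L$ (hence two values of $w_i\in\mathbb{Z}_5$) are admissible in the decomposition $W=L\oplus L^{\vee}$; coprimality distinguishes the two weights from one another but gives no reason why $L$ must carry one rather than the other. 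Nor can the parenthetical appeal to the swap $L\leftrightarrow L^{\vee}$ rescue uniqueness: that swap is a single global identification, so it cannot collapse independent per-puncture choices made at two or more punctures (four choices at two such punctures are only identified in pairs). So the ``arithmetic check'' you defer is not a routine verification; as formulated it fails, and the reduction of the case-(2a) count to $2g-2+s$ needs a different justification. Note also that this is not the role the non-reducedness hypothesis plays in the paper: there (see the sentence immediately after Corollary \ref{905}) it guarantees that the denominators $m_i$ are the genuine ones, i.e.\ that the orbifold $M$ and hence the number $s_0$ entering the formula are correctly determined, the reducible case being handled afterwards by fraction reduction; the statement that with $\alpha$ fixed only the degree of $L$ survives as an invariant is instead inherited from the fixed-structure argument of \S 8, where the weight of $L$ is part of the fixed parabolic data, and your closing remark that the count is ``automatic[ally]'' independent of the specific $k_{ij}$ sits uneasily with the fact that your own case-(2a) argument invoked them.
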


To deal with the reducible case, we can turn a non-reduced parabolic structure into a reducible one by fraction reduction. Then, this goes back to the non-reduced case as in Corollary \ref{905}.

\vspace{2mm}
\textbf{Acknowledgments}.
The authors wish to express their warmest acknowledgements to the anonymous referee for their valuable suggestions on this article. We are also very grateful to Marina Logares for providing useful comments on a preliminary version of this article. G. K. kindly thanks the Labex IRMIA of the Universit\'{e} de Strasbourg for  support during the completion of this project.

\vspace{2mm}

\bigskip

\noindent\small{\textsc{Institut de Recherche Math\'{e}matique Avanc\'{e}e, Universit\'{e} de Strasbourg}\\
7 rue Ren\'{e}-Descartes, 67084 Strasbourg Cedex, France}\\
\emph{E-mail address}:  \texttt{kydonakis@math.unistra.fr}

\bigskip
\noindent\small{\textsc{Department of Mathematics, Sun Yat-Sen University}\\
135 Xingang W Rd, BinJiang Lu, Haizhu Qu, Guangzhou Shi, Guangdong Sheng, China}\\
\emph{E-mail address}:  \texttt{sunh66@mail.sysu.edu.cn}

\bigskip
\noindent\small{\textsc{Department of Mathematics, University of Illinois at Urbana-Champaign}\\
1409 W. Green St, Urbana, IL 61801, USA}\\
\emph{E-mail address}: \texttt{lzhao35@illinois.edu}

\begin{thebibliography}{99}

\bibitem{BaScha}
Baraglia, D. and Schaposnik, L. P. Monodromy of rank 2 twisted Hitchin systems and real character varieties. \emph{Trans. Amer. Math. Soc.} \textbf{370} (2018), no. 8, 5491-5534.

\bibitem{BNR}
Beauville, A., Narasimhan, M. S. and Ramanan, S. Spectral curves and the generalised theta divisor. \emph{J. Reine Agew. Math.} \textbf{398} (1989), 169-179.

\bibitem{BiGaRi}
Biquard, O.,  Garc\'{i}a-Prada, O. and Mundet i Riera, I. Parabolic Higgs bundles and representations of the fundamental group of a punctured surface into a real group. \emph{Adv. Math.} \textbf{372} (2020), \emph{to appear}.

\bibitem{Bis}
Biswas, I. On the existence of unitary flat connections over the punctured sphere with given local monodromy around the punctures. \emph{Asian J. Math.} \textbf{3} (1999), 333-344.

\bibitem{BGL}
Biswas, I., Gothen P. B. and Logares, M. On moduli spaces of Hitchin pairs. \emph{Math. Proc. Cambridge Philos. Soc.} \textbf{151} (2011), 441-457.

\bibitem{BiMaWo}
Biswas, I., Majumder, S. and Wong, M. L. Parabolic Higgs bundles and $\Gamma $-Higgs bundles. \emph{J. Aust. Math. Soc.} \textbf{95} (2013), no. 3, 315-328.

\bibitem{BiRa}
Biswas, I. and Ramanan, S. An infinitesimal study of the moduli of Hitchin pairs. \emph{J. London Math. Soc.} (2) \textbf{49} (1994), no. 2, 219-231.

\bibitem{Dal}
Dalakov, P. Lectures on Higgs moduli and abelianisation. \emph{J. Geom. Phys.} \textbf{118} (2017), 94-125.

\bibitem{FuSt}
Furuta, M. and Steer, B. Seifert fibred homology 3-spheres and the Yang-Mills equations on Riemann surfaces with marked points. \emph{Adv. Math.} \textbf{96} (1992), no. 1, 38-102.

\bibitem{GR}
Garcia-Fernandez, M. and Ross, J. Balanced metrics on twisted Higgs bundles. \emph{Math. Ann.} \textbf{367} (2017), no. 3, 1429-1471.

\bibitem{GaGoMu}
Garc\'{i}a-Prada, O., Gothen, P. B. and Mu\~{n}oz, V. Betti numbers of the moduli space of rank 3 parabolic Higgs bundles. \emph{ Mem. Amer. Math. Soc.} \textbf{187} (2007), no. 879, viii+80 pp.

\bibitem{GaRa}
Garc\'{i}a-Prada, O. and Ramanan, S. Twisted Higgs bundles and the fundamental group of compact K\"{a}hler manifolds. \emph{Math. Res. Lett.} \textbf{7} (2000), 517-535.

\bibitem{GoLo}
G\'{o}mez, T. and Logares, M. A Torelli theorem for the moduli space of parabolic Higgs bundles. \emph{Adv. Geom.} \textbf{11} (2011), no. 3, 429-444.

\bibitem{Goth}
Gothen, P. B. Components of spaces of representations and stable triples.  \emph{Topology} \textbf{40} (2001), 823-850.

\bibitem{Gro}
Groechenig, M. Moduli of flat connections in positive characteristic. \emph{Math. Res. Lett.} \textbf{23} (2016), no. 4, 989-1047.

\bibitem{Hit1}
Hitchin, N. J. Lie groups and Teichm{\"u}ller space. \emph{Topology} \textbf{31} (1992), no. 3, 449-473.

\bibitem{Hit87b}
Hitchin, N. J. Stable bundles and integrable systems. \emph{Duke Math. J.} \textbf{54} (1987), no. 1, 91-114.

\bibitem{Hit}
Hitchin, N. J. The self-duality equations on a Riemann surface. \emph{Proc. London Math. Soc.} (3) \textbf{55} (1987), 59-126.

\bibitem{Konn}
Konno, H. Construction of the moduli space of stable parabolic Higgs bundles on a Riemann surface. \emph{J. Math. Soc. Japan} \textbf{45} (1993), no. 2, 253-276.

\bibitem{KySZ}
Kydonakis, G., Sun, H. and Zhao, L. Topological invariants of parabolic $ G $-Higgs bundles. \emph{Math. Z.} (2020), \emph{to appear}.

\bibitem{Log}
Logares, M. Parabolic $\text{U}(p,q)$-Higgs bundles. \emph{Ph. D. thesis}, Universidad Aut{\'o}noma de Madrid, 2006.

\bibitem{LoMa}
Logares, M. and Martens, J. Moduli of parabolic Higgs bundles and Atiyah algebroids. \emph{J. Reine Angew. Math.} \textbf{649} (2010), 89-116.

\bibitem{Ma}
Markman, E. Spectral curves and integrable systems. \emph{Compos. Math.} \textbf{93} (1994), no. 3, 255-290.


\bibitem{NaSt}
Nasatyr, B. and Steer, B. Orbifold Riemann surfaces and the Yang-Mills-Higgs equations. \emph{Ann. Scuola Norm. Sup. Pisa Cl. Sci.} \textbf{4} (1995), no. 4, 595-643.

\bibitem{Ra}
Rayan, S. The quiver at the bottom of the twisted nilpotent cone on $\mathbb{P}^1$. \emph{Eur. J. Math.} \textbf{3} (2017), no. 1, 1-21. 

\bibitem{RaSu}
Rayan, S. and Sundbo, E. Twisted argyle quivers and Higgs bundles. \emph{Bull. Sci. Math.} \textbf{146} (2018), 1-32.

\bibitem{Simp}
Simpson, C. T. Harmonic bundles on noncompact curves. \emph{J. Amer. Math. Soc.} \textbf{3} (1990), no. 3, 713-770.

\bibitem{Simp2010}
Simpson, C. T. Local systems on proper algebraic $V$-manifolds. \emph{Pure Appl. Math. Q.} \textbf{7} (2011), no. 4, 1675-1759.

\bibitem{Sun2020}
Sun, H. Moduli Space of $\Lambda $-modules on Projective Deligne-Mumford Stacks. arXiv:2003.11674.

\bibitem{Sza}
Szab\'{o}, S. The birational geometry of unramified irregular Higgs bundles on curves. \emph{Int. J. Math.} \textbf{28} (2017), no. 6, 1750045.

\bibitem{Yoko1}
Yokogawa, K. Compactification of moduli of parabolic sheaves and moduli of parabolic Higgs sheaves. \emph{J. Math. Kyoto Univ.} \textbf{33} (1993), 451-504.

\bibitem{Yoko2}
Yokogawa, K. Infinitesimal deformation of parabolic Higgs sheaves. \emph{Int. J. Math.} \textbf{6} (1995), 125-148.
\end{thebibliography}
\end{document}